\def\dd{{\mathrm{d}}}
\newtheorem{thm}{Theorem}[section]
\numberwithin{thm}{section}
\newtheorem{lem}[thm]{Lemma}
\newtheorem{pro}[thm]{Proposition}
\newtheorem{cor}[thm]{Corollary}
\newtheorem{de}[thm]{Definition}
\newtheorem{rem}[thm]{Remark}
\numberwithin{equation}{section}
\numberwithin{figure}{section}
\begin{document}

\title{Pointwise stability of reaction diffusion fronts}
%\subtitle{Do you have a subtitle?\\ If so, write it here}

%\titlerunning{Short form of title}        % if too long for running head

\author{Yingwei Li}

\footnotetext{Indiana University, 831 East Third Street, Rawles Hall, Bloomington, Indiana 47405, U.S.A., YL37@umail.iu.edu}

\MSdates[`Received date']{`Accepted date'}
% The correct dates will be entered by the editor
\label{firstpage}
\maketitle

\begin{abstract}
Using pointwise semigroup techniques, we establish sharp rates of decay in space and time of a perturbed reaction
diffusion front to its time-asymptotic limit.
This recovers results of Sattinger, Henry and others of time-exponential convergence in weighted $L^p$ and Sobolev norms, while capturing the new feature of spatial diffusion at Gaussian rate. Novel features of the argument are a pointwise Green function decomposition reconciling spectral decomposition and short-time Nash-Aronson estimates and an instantaneous tracking scheme similar to that used in the study of stability of viscous shock waves.
\end{abstract}

\section{Introduction}

In this paper, we revisit the problem of stability of reaction diffusion fronts treated by Sattinger, Henry, and others
\cite{Sa,He,TZ} by essentially ODE methods, from the viewpoint of the pointwise semigroup methods introduced in
\cite{ZH,MaZ} for the study of stability of viscous shock waves.
For simplicity, we treat the semilinear case, with Laplacian diffusion; however, our methods readily extend to
the general second-order quasilinear strictly parabolic case (see, e.g., \cite{ZH} in the shock wave case).

From the ODE perspective, a stationary front solution of a spatially homogeneous parabolic system in one dimension
is an equilibrium which, due to translation invariance of the underlying system, is embedded in a one-parameter family
of nearby equilibria given by its translates.
Assuming that the front is a transversal connection of the associated standing-wave ODE,
its linearized operator $L$ has a one-dimensional zero eigenspace precluding asymptotic stability.
However, under the assumption of a spectral gap,
i.e., assuming that the rest of the spectrum has strictly negative real part,
one may hope to establish asymptotic orbital stability with time-exponential convergence to an appropriate element of
the {\it family} of nearby fronts.

This is indeed what was shown (among a number of other things; see Remark \ref{satrmk} below)
by Sattinger \cite{Sa}, by what is essentially a stable manifold construction about the
(normally hyperbolic) curve of equilibria.
A particularly simple alternative argument
is sketched by Henry in \cite[Exercise 6, p. 108]{He}, based on a normal-form
reduction approximately decoupling the normal and tangential flows.
Another approach, as described, e.g., in \cite{TZ}, is to ``factor out'' the group symmetry of translation, working effectively
on the quotient space, and thereby reducing orbital to asymptotic stability.

\begin{rem}\label{satrmk}
The main point of \cite{Sa} was to treat spatially inhomogeneous systems
arising through the imposition of spatially-weighted norms on reaction-diffusion-convection equation;
for a thematically-related study in the finite-dimensional setting,
see the argument of Cronin \cite[p. 198 ]{Cr} for stability of time-periodic orbits, featuring a similar
stable-manifold construction on the nonautonomous system obtained by Floquet transformation.
\end{rem}

Each of these arguments is based ultimately on the standard spectral decomposition of the linearized solution operator
$S(t):=e^{Lt}$ into the projection $Pf:=\phi \langle \tilde \psi, f\rangle$ onto $ {\rm Kernel} \, L$, where $\phi$ and $\tilde{\psi}$
are right and left zero eigenfunctions, $\langle \tilde \psi, \phi\rangle=1$,
with $\langle \cdot, \cdot\rangle$ denoting $L^2$ inner product, plus a time-exponentially decaying
portion associated with the complementary eigenspace associated with the remaining stable spectra,
together with some form of Duhamel's formula/variation of constants.
That is, they all effectively approximate the Green function $G(x,t;y)$ of the
linearized equations with the kernel
\begin{equation}\label{projapprox}
k(x,y):-\phi(x)\tilde \psi(y)\sim e^{-\eta_\pm |x|-\nu_\pm |y|}
\end{equation}
as $|x|,|y|\to \infty$ of the translational projection $P$, for some $\eta_\pm,\nu_\pm>0$.

On the other hand, the Nash-Aronson bounds \cite{N,A} of standard short-time parabolic theory yield that the
Green function is bounded above and below by Gaussian distributions:
\begin{equation}\label{NA}
	C_1 t^{1/2}e^{-|x-y|^2/M_1 t}\leq |G(x,t;y)| \leq C_2 t^{1/2}e^{-|x-y|^2/M_2 t},
\end{equation}
where $M_j>0$ constant, $C_j>0$ are bounded above and below for any fixed time interval $0\leq t\leq T$.
Comparing the quadratic exponential decay of the Gaussian with the linear exponential decay of $k(\cdot, \cdot)$,
we see that \eqref{projapprox}, though optimal with respect to time does not give an accurate picture of the spatial
propagation of data via Gaussian diffusion.  Likewise, the bound \eqref{NA} gives no information about large-time
asymptotics.

The goal of the present analysis, as we now describe, is to reconcile these two points of view,
obtaining estimates on linear and nonlinear behavior that are optimal both in the large-$t$ and large-$x$ regimes;
that is, to reconcile ODE and PDE estimates to obtain sharp pointwise dynamics.

\medskip

Consider a stationary front solution $u(x,t)=\bar{u}(x), \lim_{z\to \pm \infty}\bar{u}(z)=u_{\pm}$
of a system of reaction diffusion equations
\begin{equation}
u_t=u_{xx}+f(u). \label{readiff}
\end{equation}
For simplicity, take $f\in C^\infty$ throughout the paper.
Obviously, $\bar{u}$ satisfies
\begin{equation}
\bar{u}_{xx}+f(\bar{u})=0. \label{readiffprofile}
\end{equation}

Linearizing \eqref{readiff} about $\bar{u}$, we have
\begin{equation}
v_t=Lv:=v_{xx}+Df(\bar{u})v. \label{readifflinearoperator}
\end{equation}

The homogeneous linearized equation
\begin{equation}
v_t-Lv=v_t-v_{xx}-Df(\bar{u})v=0, \quad v|_{t=0}=g \label{homogeneouslinearized}
\end{equation}
can be solved by
\begin{equation}
e^{Lt}g=\int_{-\infty}^{+\infty}G(x,t;y)g(y)dy
\end{equation}
where $G(x,t;y)=S(t)\delta_{y}(x)=e^{Lt}\delta_{y}(x)$ is the Green function for \eqref{homogeneouslinearized}.

Differentiating \eqref{readiffprofile}, we obtain the standard fact that the translational
mode $\bar u'$ is a zero eigenfunction of the linearized operator $L$.
%i.e., $L$ has a kernel corresponding to instantaneous translation.
Introduce the spectral stability condition:

{\it $(\mathcal{D})$: The operator $L$ has a simple eigenvalue at $\lambda=0$ (with eigenfunction $\bar{u}^{\prime}$),
with all other spectrums of $L$ satisfying $\mathrm{Re}\lambda <-\eta$ for some $\eta >0$.}

\medskip
\begin{rem}
This has the consequence that eigenvalues of $Df(u_{\pm})$ have strictly negative real parts, corresponding to the stability of the end states $u_{\pm}$ as equilibria of the reaction ODE, $\dot{u}=f(u)$, ignoring the effects of diffusion.
\end{rem}

Assuming $(\mathcal{D})$, we obtain in standard fashion that the limits $u_{\pm}$ as $x\to \pm \infty$ of $\bar u(x)$ are
hyperbolic rest points of \eqref{readiffprofile} and thus, by the Stable Manifold Theorem:

\begin{pro}
Under assumption $(\mathcal{D})$, there exists a constant $C>0$ such that
\begin{eqnarray}
|\bar{u}(x)-u_{\pm}|\leq Ce^{-\eta |x|}, \quad x\gtrless 0. \label{profileproperty}
\end{eqnarray}
\end{pro}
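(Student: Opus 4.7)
The plan is to recast the second-order profile ODE as a first-order autonomous system on $\mathbb{R}^{2n}$ and apply the Stable/Unstable Manifold Theorem at each of the two rest points $u_\pm$. Setting $v := \bar u_x$, equation \eqref{readiffprofile} becomes
\begin{equation*}
\bar u_x = v, \qquad v_x = -f(\bar u),
\end{equation*}
so $U_\pm := (u_\pm, 0)$ are rest points (since $f(u_\pm)=0$). The linearization about $U_\pm$ is the constant matrix
\begin{equation*}
A_\pm := \begin{pmatrix} 0 & I \\ -Df(u_\pm) & 0 \end{pmatrix},
\end{equation*}
whose eigenvalues $\mu$ satisfy $\mu^2 \in \sigma(-Df(u_\pm))$.

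The first step is to extract from condition $(\mathcal{D})$ the fact that $\sigma(Df(u_\pm))$ lies strictly inside the left half-plane. This follows because the constant-coefficient operator $L_\pm := \partial_{xx} + Df(u_\pm)$, obtained by freezing coefficients at $\pm\infty$, contributes to the essential spectrum of $L$; Fourier transformation gives $\sigma(L_\pm) = \{-\xi^2 + \lambda : \xi \in \mathbb{R},\ \lambda \in \sigma(Df(u_\pm))\}$, whose supremum of real parts is attained at $\xi = 0$ and equals $\max \mathrm{Re}\,\sigma(Df(u_\pm))$. Since $(\mathcal{D})$ forces this supremum below $-\eta$, we obtain $\mathrm{Re}\,\sigma(Df(u_\pm)) < 0$, hence every eigenvalue $\mu$ of $A_\pm$ satisfies $\mu^2$ in the open right half-plane, so $\mathrm{Re}\,\mu \neq 0$, with eigenvalues appearing in balanced pairs $\pm \mu$. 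Thus $U_\pm$ are hyperbolic saddle-type rest points.

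With hyperbolicity in hand, the Stable Manifold Theorem furnishes a smooth stable manifold $W^s(U_+)$ at $U_+$ and an unstable manifold $W^u(U_-)$ at $U_-$, on which orbits approach the corresponding rest point at exponential rate governed by the spectral gap of $A_\pm$. Since the profile $(\bar u, v)$ is a bounded orbit of the first-order system with $(\bar u(x), v(x)) \to U_\pm$ as $x \to \pm\infty$, it must lie on $W^s(U_+)$ for $x > 0$ and on $W^u(U_-)$ for $x < 0$. Standard estimates along the (un)stable manifold then yield $|(\bar u(x) - u_\pm,\ v(x))| \leq C e^{-\eta |x|}$ for $\pm x > 0$, where $\eta$ is any positive number strictly less than $\min\{|\mathrm{Re}\,\mu| : \mu \in \sigma(A_\pm)\}$; since only the existence of such an $\eta>0$ is asserted in the conclusion, we simply relabel (shrinking $\eta$ from the $(\mathcal{D})$ value if necessary).

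There is no genuine obstacle here; the argument is a textbook reduction to the Stable Manifold Theorem. The only subtlety worth care is bookkeeping: the eigenvalues of $A_\pm$ are square roots of eigenvalues of $-Df(u_\pm)$, so the decay rate furnished by the Stable Manifold Theorem is at best of order $\sqrt{\eta}$ rather than $\eta$ itself. Since the statement only claims exponential decay at some positive rate denoted $\eta$, this is harmless and handled by a harmless abuse of notation, retaining the letter $\eta$ for the eventual rate.
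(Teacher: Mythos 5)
Your proof is correct and follows the same route the paper takes: the paper's proof consists of the single sentence ``Assuming $(\mathcal{D})$, we obtain in standard fashion that the limits $u_{\pm}$ as $x\to \pm \infty$ of $\bar u(x)$ are hyperbolic rest points of the profile ODE and thus, by the Stable Manifold Theorem,'' and the accompanying Remark makes exactly the observation you use, that $(\mathcal{D})$ forces $\mathrm{Re}\,\sigma(Df(u_\pm))<0$. You have simply spelled out the implicit steps: passing to the first-order system, identifying the essential spectrum of the frozen-coefficient operators by Fourier transform, and confirming that the resulting ODE rest points are saddles. Your closing caveat about the rate is a fair and worthwhile observation: the decay exponent produced by the Stable Manifold Theorem is governed by $\min_j|\mathrm{Re}\,\mu_j^\pm|$, which the paper itself later denotes $\eta'$ in Proposition \ref{Proposition4.1}, and is generically not equal to the spectral-gap constant $\eta$ of $(\mathcal{D})$; the paper overloads the symbol $\eta$ here, and you handle this correctly by noting that only the existence of some positive rate is needed.
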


Our first result is a pointwise version of the standard approximation by \eqref{projapprox}:
\begin{pro} \label{Proposition1.2}
Under assumption $(\mathcal{D})$, the Green function can be decomposed as
\begin{equation}
G(x,t;y)=E(x,t;y)+\tilde{G}(x,t;y), \label{greenfunctiondecom}
\end{equation}
where
\begin{equation}
E(x,t;y)=\bar{u}^{\prime}(x)e(y,t),
\end{equation}
\begin{equation} \label{e(y,t)}
e(y,t)=\chi(t)\tilde{\psi}(y).
\end{equation}
Here, $\chi(t)$ is a $C^{\infty}$ cutoff function satisfying $\chi(t)\equiv 0$ for $0\leq t\leq 1$ and $\chi(t)\equiv 1$ for $t\geq 2$. 
$\tilde{\psi}(y)$ is a left eigenvector of the linearized solution operator $S(t)=e^{Lt}$.

For any $\eta_0$ satisfying $0<\eta_0<\min(\eta/4,\eta^{\prime})$, $\eta^{\prime}$ as defined in Proposition \ref{Proposition4.1}, there exist positive constants $C_0, C_1, C_2$ and $C$ for which the following inequalities hold:
%$\tilde{G}(x,t;y)$ satisfies
\begin{equation} \label{tilde_G_estimate}
\begin{aligned}
|\tilde{G}(x,t;y)|&\leq C_1 t^{-\frac{1}{2}} e^{-\eta_0 t-\frac{|x-y|^2}{4C_0 t}}+C_2 e^{-\eta_0(t+|x-y|)},
\end{aligned}
\end{equation}
%$\tilde{G}_y(x,t;y)$ satisfies
\begin{equation} \label{tilde_G_y_estimate}
\begin{aligned}
|\tilde{G}_y(x,t;y)|\leq C_1 t^{-1} e^{-\eta_0 t-\frac{|x-y|^2}{4C_0 t}}+C_2 e^{-\eta_0(t+|x-y|)},
\end{aligned}
\end{equation}
%$e(y,t)$, $e_y(y,t)$, $e_t(y,t)$ and $e_{ty}(y,t)$ satisfy
\begin{equation} \label{e(y,t)estimate}
\begin{aligned}
|e(y,t)|\leq Ce^{-\eta_0 |y|}, &\quad |e_y(y,t)|\leq Ce^{-\eta_0 |y|},\\
|e_t(y,t)|\leq Ce^{-\eta_0 (t+|y|)}, &\quad |e_{ty}(y,t)|\leq C e^{-\eta_0 (t+|y|)}.
\end{aligned}
\end{equation}
\end{pro}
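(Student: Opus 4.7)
The plan is to follow the pointwise semigroup method of \cite{ZH,MaZ}, writing
\begin{equation*}
G(x,t;y) = \frac{1}{2\pi i} \int_{\Gamma} e^{\lambda t} G_\lambda(x,y)\, d\lambda,
\end{equation*}
with $G_\lambda$ the resolvent kernel of $L$. By assumption $(\mathcal{D})$, the contour can be shifted leftward to $\Gamma' = \{\Re\lambda = -\eta_0\}$ for $0<\eta_0<\min(\eta/4,\eta')$, crossing the simple pole at $\lambda=0$. The corresponding residue is exactly $\bar u'(x)\tilde\psi(y)$, where $\tilde\psi$ is the left null eigenfunction of $L$ normalized by $\langle\tilde\psi,\bar u'\rangle = 1$; for $t\geq 2$ this is precisely the kernel $E$ in \eqref{greenfunctiondecom}, and the smooth cutoff $\chi(t)$ interpolates between the short-time regime (where no residue is extracted) and this long-time representation.

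To control $\tilde G$ in the large-$t$ regime one needs pointwise bounds on $G_\lambda(x,y)$ along $\Gamma'$. Because $(\mathcal{D})$ forces $\sigma(Df(u_\pm))\subset\{\Re z<0\}$, the characteristic roots $\mu_\pm(\lambda)$ of the asymptotic constant-coefficient operators $\partial_x^2 + Df(u_\pm) - \lambda$ have $\Re\mu_\pm \geq \mu_0>0$ uniformly on $\Gamma'$; a variation-of-parameters construction relative to these asymptotic operators (using the exponential decay \eqref{profileproperty} to handle the profile-dependent correction) yields a bound of the form $|G_\lambda(x,y)| \lesssim |\mu(\lambda)|^{-1} e^{-\Re\mu(\lambda)|x-y|}$. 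Substituting into the contour integral and splitting into low- and high-frequency pieces in $\Im\lambda$ produces the two summands of \eqref{tilde_G_estimate}: a low-frequency saddle-point contribution gives the Gaussian $t^{-1/2}e^{-|x-y|^2/(4C_0 t)}$ term (with the $e^{-\eta_0 t}$ factor supplied by the contour shift), while the high-frequency tails give the pure spatial-exponential term $e^{-\eta_0(t+|x-y|)}$. The derivative bound \eqref{tilde_G_y_estimate} costs one extra factor of $\mu(\lambda)$, improving the Gaussian rate from $t^{-1/2}$ to $t^{-1}$. For the short-time range $t\leq 2$, where the cutoff makes $\tilde G$ coincide with $G$ up to a smooth correction supported in $[1,2]$, the classical Nash-Aronson bound $|G|\lesssim t^{-1/2}e^{-|x-y|^2/Mt}$ already implies \eqref{tilde_G_estimate}, since $e^{-\eta_0 t}$ is bounded below on this bounded interval.

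The estimates \eqref{e(y,t)estimate} on $e(y,t)=\chi(t)\tilde\psi(y)$ follow from exponential decay $|\tilde\psi(y)|\lesssim e^{-\eta'|y|}$, obtained by applying the Stable Manifold Theorem to the adjoint profile ODE $\tilde\psi_{xx}+Df(\bar u)^T\tilde\psi=0$ whose end-state Jacobians $Df(u_\pm)^T$ share the spectrum of $Df(u_\pm)$, together with the fact that $\chi'$ is supported in $[1,2]$ and hence bounded by a multiple of $e^{-\eta_0 t}$, which supplies the time-exponential factor in the bounds for $e_t$ and $e_{ty}$. The main obstacle is the low-frequency part of the contour integral: one must perform a saddle-point analysis that simultaneously extracts the spectral (exponential-in-time) and diffusive (Gaussian-in-space) contributions from a single integral on $\Gamma'$, and match this smoothly to the short-time Nash-Aronson regime through the transition zone $1\leq t\leq 2$. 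This reconciliation of the ODE and PDE bounds is the technical heart of the pointwise semigroup method; the present adaptation is somewhat cleaner than in \cite{ZH,MaZ} because $\lambda=0$ is here a genuine isolated eigenvalue rather than a branch point embedded in essential spectrum.
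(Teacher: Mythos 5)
Your proposal correctly identifies the residue extraction at $\lambda=0$, the role of the cutoff $\chi$, and the source of the $e(y,t)$ bounds (exponential decay of $\tilde\psi$ from the adjoint profile ODE, exponential decay of $\chi_t$). Those pieces are in agreement with the paper. However, the mechanism you describe for obtaining the Gaussian term of \eqref{tilde_G_estimate} contains a genuine gap, and the attribution of the two summands is reversed relative to the actual proof.

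First, the vertical contour $\Gamma'=\{\Re\lambda=-\eta_0\}$ is not admissible: the high-frequency resolvent bound is only $|G_\lambda|\lesssim|\lambda|^{-1/2}e^{-\beta^{-1/2}|\lambda|^{1/2}|x-y|}$, so at $x=y$ the integrand is $\sim|\lambda|^{-1/2}$, which is not integrable along a vertical line. The paper instead uses $\tilde\Gamma=\tilde\Gamma_1\cup\tilde\Gamma_2$, where $\tilde\Gamma_1$ is a bounded segment on $\Re\lambda=-\eta/2$ and $\tilde\Gamma_2$ consists of sector rays of negative slope; along $\tilde\Gamma_2$ the additional factor $e^{(\Re\lambda-\Re\lambda_0)t}\leq e^{-\theta_2|\Im\lambda-\Im\lambda_0|t}$ gives both absolute convergence and the $t^{-1/2}$ prefactor that your contour would not produce.

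Second, the Gaussian $t^{-1/2}e^{-|x-y|^2/(4C_0t)}$ does \emph{not} emerge from a saddle analysis along a fixed left-shifted contour. Along any fixed contour you only get $e^{-\eta_0 t}$ times a fixed spatial exponential rate (determined by $\Re\mu(\lambda)$ on the contour), never a quadratic exponent in $|x-y|$. The paper's key device is to split according to the scale of $|x-y|/t$ and, in the parabolic regime $|x-y|/t\geq S$, to choose the contour radius $R=\beta\bar\alpha^2$ with $\bar\alpha=|x-y|/(2\beta t)$, so that $\lambda t$ and $\beta^{-1/2}|\lambda|^{1/2}|x-y|$ are of the same size at the intersection point; the resulting exponent $e^{-\beta\bar\alpha^2 t}=e^{-|x-y|^2/(4\beta t)}$ is the Gaussian. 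That contour adjustment depending on $(x,y,t)$ is the essence of the pointwise semigroup argument and is missing from your proposal. Moreover, the pure spatial-exponential summand $e^{-\eta_0(t+|x-y|)}$ in fact arises from the \emph{bounded}-$\lambda$ segment $\tilde\Gamma_1$, where the low-frequency resolvent bound $|\tilde G_\lambda|\lesssim e^{-\eta'(|x|+|y|)}$ holds, paired with $e^{\Re\lambda t}=e^{-(\eta/2)t}$; the high-frequency arc $\tilde\Gamma_2$ produces the Gaussian term. Your proposal assigns these the other way around. Finally, the paper does not invoke Nash--Aronson for $t\leq 2$; the contour estimates already cover all $t>0$ uniformly, with the $t^{-1/2}$ factor coming directly from the $\tilde\Gamma_2$ integral. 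Using Nash--Aronson for bounded $t$ is a workable alternative, but it does not substitute for the missing contour-shift argument needed to get the Gaussian for large $t$.
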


%The following more precise decomposition of the Green function $G(x,t;y)$ refines \eqref{projapprox}, capturing
Our second result refines \eqref{projapprox}, capturing
Gaussian spatial propagation of perturbations:
\begin{pro} \label{Proposition1.3}
Under assumption $(\mathcal{D})$, the Green function can be decomposed as
\begin{equation}
G(x,t;y)=F(x,t;y)+\tilde{H}(x,t;y), \label{green_func_decom_Impro}
\end{equation}
where
\begin{equation} \label{F(x,t;y)}
F(x,t;y)=\bar{u}^{\prime}(x)\tilde{e}(x,t;y),
\end{equation}
\begin{equation}
\tilde{e}(x,t;y)=\chi(t)\tilde{\psi}(y)\left(\mathrm{errfn}\left(\frac{x-y+t}{\sqrt{4t}}\right)
 -\mathrm{errfn}\left(\frac{x-y-t}{\sqrt{4t}}\right)\right).
\end{equation}

Here, $\chi(t)$ and $\tilde{\psi}(y)$ are the same as in ${\rm Proposition}$ \ref{Proposition1.2}.

For $0<\eta_0<\min(\eta/4,\eta^{\prime}, \frac{1}{16} )$, and any integer $k,m\geq 0$, there exist constants $M,C,C_m >0$ sufficiently large such that
%$\tilde{H}(x,t;y)$ satisfies
\begin{equation} \label{tilde_H_estimate}
\begin{aligned}
|\tilde{H}(x,t;y)|\leq Ct^{-\frac{1}{2}}e^{-\eta_0 t-\frac{|x-y|^2}{M t}},
\end{aligned}
\end{equation}
%$\tilde{H}_y(x,t;y)$ satisfies
\begin{equation} \label{tilde_H_y_estimate}
\begin{aligned}
|\tilde{H}_y(x,t;y)|\leq Ct^{-1}e^{-\eta_0 t-\frac{|x-y|^2}{Mt}},
\end{aligned}
\end{equation}
%$\tilde{e}(y,t)$, $\tilde{e}_y(y,t)$, $\tilde{e}_t(y,t)$ and $\tilde{e}_{ty}(y,t)$ satisfy
\begin{equation} \label{tilde_e(y,t)estimate}
\begin{aligned}
|\partial_{t}^{k}\partial_{x}^{m}\tilde{e}(x,t;y)|\leq& C_m e^{-\eta_0 |y|}
\left|\frac{e^{-\frac{(x-y+t)^2}{Mt}}}{\sqrt{t+1}}-\frac{e^{-\frac{(x-y-t)^2}{Mt}}}{\sqrt{t+1}}\right|.
\end{aligned}
\end{equation}
\end{pro}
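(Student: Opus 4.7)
The plan is to leverage Proposition~\ref{Proposition1.2} by writing
\[
\tilde H(x,t;y) \;=\; G(x,t;y) - F(x,t;y) \;=\; \tilde G(x,t;y) \,+\, \bigl(E(x,t;y) - F(x,t;y)\bigr),
\]
where the correction is explicit:
\[
E - F \;=\; \bar u'(x)\,\chi(t)\,\tilde\psi(y)\,\bigl[\,1 - \mathrm{errfn}(a) + \mathrm{errfn}(b)\,\bigr],\qquad a:=\tfrac{x-y+t}{\sqrt{4t}},\; b:=\tfrac{x-y-t}{\sqrt{4t}}.
\]
The task reduces to estimating $E - F$ via error-function asymptotics and reconciling the result with the bound \eqref{tilde_G_estimate} on $\tilde G$.

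I would estimate the errfn correction using $|1 - \mathrm{errfn}(z)|\lesssim e^{-z^2}$ for $z\ge 0$ and $|\mathrm{errfn}(z)|\lesssim e^{-z^2}$ for $z\le 0$. Since
\[
a^2 \;=\; \frac{((x-y)+t)^2}{4t},\qquad b^2 \;=\; \frac{((x-y)-t)^2}{4t},
\]
a short case analysis on the signs of $a$ and $b$ yields
\[
\bigl|1 - \mathrm{errfn}(a) + \mathrm{errfn}(b)\bigr| \;\lesssim\; e^{-(|x-y|-t)^2/(4t)} \;=\; e^{-|x-y|^2/(4t) + |x-y|/2 - t/4}.
\]
Combining with the exponential eigenfunction decay $|\bar u'(x)\tilde\psi(y)|\lesssim e^{-\eta(|x|+|y|)}\le e^{-\eta|x-y|}$ and absorbing the linear factor $e^{|x-y|/2}$ against a piece of the eigenfunction decay, one obtains $|E-F|\lesssim e^{-\eta_0 t - |x-y|^2/(Mt)}$ for $M$ large and $\eta_0 < \eta/4$; trading a bit of time decay for a factor of $t^{-1/2}$ produces the form in \eqref{tilde_H_estimate}.

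The linear-exponential remainder $C_2 e^{-\eta_0(t+|x-y|)}$ of \eqref{tilde_G_estimate} is handled by the Young inequality $\eta_0|x-y|\le |x-y|^2/(Mt) + M\eta_0^2 t/4$, which in the regime $|x-y|\le M\eta_0 t$ recasts the spatial decay as a Gaussian at the price of a modest reduction in the effective time rate; in the complementary far-field $|x-y|\gg M\eta_0 t$, one combines the short-time Nash--Aronson bound on $G$ with the explicit Gaussian tail of the errfn factor in $F$ (since for $|x-y|\gg t$ one has $|\mathrm{errfn}(a)-\mathrm{errfn}(b)|\lesssim \sqrt t\,e^{-|x-y|^2/(4t)}$) to bound $|\tilde H|\le |G|+|F|$ directly by the envelope in \eqref{tilde_H_estimate}. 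For the derivatives, differentiating $\tilde e$ in $x$ or $t$ produces heat-kernel-type factors $(4\pi t)^{-1/2}e^{-((x-y)\pm t)^2/(4t)}$ directly, giving \eqref{tilde_e(y,t)estimate}, and \eqref{tilde_H_y_estimate} follows by combining \eqref{tilde_G_y_estimate} with the differentiated $E-F$, which gains an extra $t^{-1/2}$ through the errfn derivatives.

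The main obstacle is the reconciliation of the two competing linear-exponential decay mechanisms---the eigenfunction tails $e^{-\eta|x|},\,e^{-\eta|y|}$ coming from the spectral structure of $Df(u_\pm)$, and the far-field resolvent decay $e^{-\eta_0|x-y|}$ inherited from $\tilde G$---with the Gaussian envelope demanded by \eqref{tilde_H_estimate}. The tradeoff is delicate: $M$ must be taken sufficiently large and $\eta_0<\min(\eta/4,\eta')$ small enough that Young's inequality dominates in the near field, while the Nash--Aronson Gaussian tails dominate in the far field, preserving strictly positive effective time-decay throughout. The transitional region $|x-y|\sim t$ is the most delicate, since there the errfn correction is neither negligible nor purely Gaussian and the argument rests on the full interplay between eigenfunction decay and the explicit errfn structure.
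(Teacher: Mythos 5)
Your overall strategy mirrors the paper's: bound $\tilde H$ near the diagonal via $\tilde H=\tilde G+(E-F)$, and in the far field via $\tilde H = G - F$, with a case split on the ratio $|x-y|/t$. However, there is a genuine gap in the intermediate regime and a direction error in a key inequality, both of which need repair.

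The principal problem is the claimed bound
$\bigl|1-\mathrm{errfn}(a)+\mathrm{errfn}(b)\bigr|\lesssim e^{-(|x-y|-t)^2/(4t)}$,
which is simply false once $|x-y|>t$: there both $a,b>0$ (say $x-y>0$), so $1-\mathrm{errfn}(a)$ is tiny but $\mathrm{errfn}(b)$ is close to $1$, and the left-hand side is $\mathcal{O}(1)$ while the right-hand side is exponentially small. Your far-field argument only kicks in for $|x-y|\gg M\eta_0 t$, so the band $t\lesssim |x-y|\lesssim M\eta_0 t$ (a genuinely occupied region once $\eta_0$ and $M$ are chosen) is uncovered. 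The paper's Case II (namely $\tfrac12\le |x-y|/t\le S$) handles exactly this regime: it does \emph{not} use any smallness of the errfn correction at all, but only its boundedness, and generates the Gaussian purely from the eigenfunction tail $e^{-\eta(|x|+|y|)}\le e^{-\eta|x-y|}$ traded into $t^{-1/2}e^{-\eta_0 t-|x-y|^2/(Mt)}$ using that $\eta|x-y|/t\ge \eta/2$ and $|x-y|^2/(Mt^2)\le S^2/M$ is small. This step is missing from your outline and is, as you suspected, where the real work lies.

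Two secondary issues. First, your Young-inequality step is oriented the wrong way: from $\eta_0|x-y|\le |x-y|^2/(Mt)+M\eta_0^2 t/4$ one gets a \emph{lower} bound for $e^{-\eta_0|x-y|}$, not the upper bound you need; the correct elementary observation for $|x-y|\le M\eta_0 t$ is simply $|x-y|^2/(Mt)\le \eta_0|x-y|$, hence $e^{-\eta_0|x-y|}\le e^{-|x-y|^2/(Mt)}$. Second, the phrase ``absorbing the linear factor $e^{|x-y|/2}$ against a piece of the eigenfunction decay'' requires $\eta>1/2$, which is not available; the absorption should instead be against the $e^{-t/4}$ arising from the errfn bound, using that $|x-y|/t$ is bounded (so $e^{|x-y|/2-t/4}\le 1$ when $|x-y|\le t/2$). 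With the intermediate-regime argument supplied and these two steps corrected, your outline aligns with the paper's proof of Proposition~\ref{Proposition1.3}.
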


From Proposition \ref{Proposition1.2}, we obtain the following theorem
recovering the $L^p$ results of \cite{Sa,He}. Let $L^{p_0}, L^{p}, L^{\infty}$ denote the space of functions with finite $\|\cdot\|_{L^{p_0}}, \|\cdot\|_{L^{p}}, \|\cdot\|_{L^{\infty}}$-norms in the space variable $x \in \mathbb{R}$, and $W_{t}^{1,\infty}$ denote the space of functions with finite $\|\cdot\|_{W^{1,\infty}}$-norm in the time variable $t\in (0,\infty)$.
\begin{thm}[Nonlinear Stability] \label{mainthm}
Assuming $(\mathcal{D})$,
for any $p_0\geq 1$, stationary solutions $\bar{u}(x)$ of \eqref{readiff} are nonlinearly stable in $L^{p_0}\cap L^{\infty}$ and nonlinearly orbitally asymptotically stable in $L^p$, $p\geq p_0$, with respect to initial perturbations $u_0$ that are sufficiently small in $L^{p_0}\cap L^{\infty}$. More precisely, there exist some $C>0$ and $\alpha \in W_{t}^{1,\infty}$, such that
\begin{eqnarray*}
|\tilde{u}(x,t)-\bar{u}(x-\alpha(t))|_{L^p(x)}&\leq&Ce^{-\eta_0 t}
|\tilde{u}-\bar{u}|_{L^{p_0}\cap L^{\infty}}|_{t=0},\\
|\dot{\alpha}(t)|&\leq&Ce^{-\eta_0 t}|\tilde{u}-\bar{u}|_{L^{p_0}\cap L^{\infty}}|_{t=0},\\
|\alpha(t)|&\leq&C|\tilde{u}-\bar{u}|_{L^{p_0}\cap L^{\infty}}|_{t=0},\\
|\tilde{u}-\bar{u}|_{L^{p}}(t)&\leq&C|\tilde{u}-\bar{u}|_{L^{p_0}\cap L^{\infty}}|_{t=0},
\end{eqnarray*}
for all $t\geq 0$, $p_0\leq p\leq\infty$, for solutions $\tilde{u}$ of \eqref{readiff} with
$|\tilde{u}-\bar{u}|_{L^{p_0}\cap L^{\infty}}|_{t=0}$ sufficiently small.
\end{thm}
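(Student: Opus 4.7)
I would build the proof as a Duhamel-based nonlinear iteration using the Green function decomposition of Proposition~\ref{Proposition1.2}, together with an instantaneous tracking scheme for the phase shift $\alpha(t)$, in the style of Mascia--Zumbrun's pointwise semigroup analysis of viscous shocks. Setting $\tilde v(x,t) := \tilde u(x,t) - \bar u(x)$, one has
\[
\tilde v_t - L\tilde v = N(\tilde v), \qquad N(\tilde v) := f(\bar u + \tilde v) - f(\bar u) - Df(\bar u)\tilde v = O(|\tilde v|^2),
\]
and Duhamel gives $\tilde v = \int G\, v_0 + \int_0^t\!\!\int G\, N(\tilde v)$ with $v_0 := \tilde v|_{t=0}$. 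Substituting $G = E + \tilde G$ with $E(x,t;y) = \bar u'(x)\, e(y,t)$, the $E$-contribution is $\bar u'(x)$ times a scalar, which I absorb into a shift of the profile by defining
\[
\alpha(t) := -\int e(y,t)\, v_0(y)\, dy - \int_0^t\!\!\int e(y,t-s)\, N(\tilde v)(y,s)\, dy\, ds.
\]
Taylor expanding $\bar u(x-\alpha) = \bar u(x) - \alpha\,\bar u'(x) + O(\alpha^2)$ then converts the Duhamel identity into an expression for $\hat v(x,t) := \tilde u(x,t) - \bar u(x - \alpha(t))$ involving only $\tilde G$-convolutions, plus a localized quadratic-in-$\alpha$ remainder.

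For the linear estimates, integrating \eqref{tilde_G_estimate} yields $\|\tilde G(\cdot,t;y)\|_{L^r_x} \le C(t^{-1/2+1/(2r)} + 1)\, e^{-\eta_0 t}$, so by Young's inequality and $L^{p_0}\cap L^\infty \hookrightarrow L^q$,
\[
\Bigl\|\int\tilde G(\cdot,t;y)\, f(y)\, dy\Bigr\|_{L^p_x} \le C e^{-\eta_0 t}\, \|f\|_{L^{p_0}\cap L^\infty}, \qquad p_0 \le p \le \infty.
\]
The bounds \eqref{e(y,t)estimate} on $e$, $e_y$, $e_t$, $e_{ty}$ give, by H\"older, analogous control on $\alpha$ and $\dot\alpha$. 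Setting
\[
\zeta(t) := \sup_{0\le s\le t}\bigl\{ e^{\eta_0 s}(\|\hat v\|_{L^p\cap L^\infty} + |\dot\alpha|)(s) + |\alpha(s)|\bigr\}
\]
and $\epsilon_0 := \|v_0\|_{L^{p_0}\cap L^\infty}$, the pointwise bound $|N(\tilde v)| \le C|\tilde v|^2$ combines with these linear estimates into a bootstrap inequality $\zeta(t) \le C_1\epsilon_0 + C_2\, \zeta(t)^2$; continuous induction for $\epsilon_0$ small then yields $\zeta(t) \le 2 C_1 \epsilon_0$ for all $t \ge 0$, from which all four bounds of the theorem follow via the triangle inequality.

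The hard part is closing this iteration in the presence of a persistent, non-decaying-in-time contribution to $N(\tilde v)$ coming from $\delta := \bar u(\cdot-\alpha) - \bar u$, a $\bar u'$-type mode of amplitude $|\alpha|$ that never dies; naively, its contribution to $\dot\alpha$ and to the $\tilde G$-integrals is $O(\epsilon_0^2)$ with no time decay. The resolution is the algebraic cancellation
\[
\int\tilde\psi(y)\, N(\delta)(y)\, dy = -\int (L^\ast\tilde\psi)(y)\, \delta(y)\, dy = 0,
\]
which follows by rewriting the profile equation $\bar u''(\cdot-\alpha) + f(\bar u(\cdot-\alpha)) = 0$ as $N(\delta) = -L\delta$ and invoking $L^\ast\tilde\psi = 0$, together with the observation that $L\bar u' = 0$ upgrades $L\delta$ from $O(|\alpha|)$ to $O(|\alpha|^2)$. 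These cancellations ensure that the troublesome piece of $N$ is transverse to the translational mode and thus inherits the full exponential decay of $\tilde G$ in the Duhamel integral, forcing $\dot\alpha$ to decay at rate $e^{-\eta_0 t}$ and $\alpha$ to remain uniformly bounded.
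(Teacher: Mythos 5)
Your proposal correctly identifies the overall framework (Green-function decomposition, defining $\alpha$ to absorb the $\bar u'$-component, bootstrap on a time-weighted functional $\zeta$), and the cancellation $\int\tilde\psi\,N(\delta)\,dy = -\int (L^*\tilde\psi)\,\delta\,dy = 0$ is a genuine and clever identity. But the structural choice you make differs from the paper's in a way that opens a gap you have not closed.

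The paper applies Duhamel not to the un-shifted perturbation $\tilde v = \tilde u-\bar u$ but to the co-moving perturbation $u(x,t)=\tilde u(x+\alpha(t),t)-\bar u(x)$ from \eqref{perturbationeq}. The resulting source terms are $N(u,\bar u)=\mathcal O(|u|^2)$ and $\dot\alpha(u_x+\bar u_x)$, and after the $\dot\alpha\bar u_x$ contribution is integrated exactly (using $e^{Lt}\bar u'=\bar u'$) into $\alpha(t)\bar u'(x)$ and absorbed into the definition of $\alpha$, every remaining source in \eqref{integral_rep_perturb}--\eqref{translate_deri} is genuinely quadratic in the small, time-decaying quantities $u$ and $\dot\alpha$. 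Nothing persists. In your scheme, $\tilde v$ does not tend to zero (it tends to $\bar u(\cdot-\alpha_\infty)-\bar u$), so $N(\tilde v)$ carries a persistent $\mathcal O(\alpha^2)$ forcing $N(\delta)=-L\delta$ with $\delta=\bar u(\cdot-\alpha)-\bar u$.

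The specific claim that is wrong is that orthogonality to $\tilde\psi$ makes $N(\delta)$ \emph{inherit the time-exponential decay of $\tilde G$ in the Duhamel integral}. That is not how Duhamel works: if the source $g(y,s)$ is merely bounded in $s$ but orthogonal to $\tilde\psi$, the contribution $\int_0^t\!\!\int\tilde G(x,t-s;y)\,g(y,s)\,dy\,ds$ does not decay; it converges to a nonzero limit of order $(-L\!\restriction_{\ker P})^{-1}g$. Your cancellation legitimately fixes the $\dot\alpha$-equation (since $e_t=\chi'\tilde\psi$, the persistent piece drops out of \eqref{translate_deri}), but it does nothing for the $\hat v$-equation, where the $\tilde G$-convolution against $N(\delta)$ and the Taylor remainder $-\tfrac{\alpha^2}{2}\bar u''+\mathcal O(\alpha^3)$ must exhibit a delicate time-integration-by-parts cancellation (using $\partial_t\tilde G = L\tilde G - \bar u'\chi'\tilde\psi$ and $\delta(\cdot,0)=0$) that you neither state nor prove. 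Without it, your bootstrap gives only $\|\hat v\|\lesssim\epsilon_0 e^{-\eta_0 t}+\epsilon_0^2$, which establishes boundedness but not the asserted asymptotic orbital decay. The paper's co-moving ansatz is precisely what eliminates this delicate bookkeeping, and it is the heart of the ``instantaneous tracking scheme'' you invoke in name but do not implement.

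Two smaller issues: your $\zeta$ carries $|\alpha(s)|$ without the exponential weight and also $\|\hat v\|_{L^p\cap L^\infty}$, but the coupling between $\alpha$, $\delta$, and $\hat v$ requires a bound on $\alpha$ to even define $\delta=\mathcal O(\epsilon_0)$; the paper cleanly separates this by first bootstrapping $(\|u\|_{L^p},|\dot\alpha|)$ in \eqref{zeta} and only afterward deriving $|\alpha(t)|\le CE_0$ from \eqref{translate} and \eqref{8.3}. Also you need the $\dot\alpha\,u_x$ cross term from \eqref{perturbationeq} in your iteration (it produces the $\tilde G_y\dot\alpha u$ term of \eqref{integral_rep_perturb}); it is absent from your source.
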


%Using
%the refined description of

From Proposition \ref{Proposition1.3}, we obtain the following improved theorem
describing at once both time-exponential decay and Gaussian spatial diffusion of perturbations:
\begin{thm}[Pointwise Nonlinear Stability] \label{mainthm2}
Assume $(\mathcal{D})$ and let $\tilde{u}(x,t)$ be a solution of \eqref{readiff}. There exist positive constants $E_0, M$ and a function $\tilde{\alpha}=\tilde{\alpha}(x,t)\in W_{t}^{1,\infty}((0,\infty);W_{x}^{k,p}(\mathbb{R}))$ for all $k\geq 0, 1\leq p\leq \infty$ such that if
$$
|u_0(x)|=|\tilde{u}(x,0)-\bar{u}(x)|\leq E_0e^{-\frac{|x|^2}{M}}
$$
then $\tilde{u}(x+\tilde{\alpha}(x,t),t)$, $\tilde{\alpha}(x,t)$ and its derivatives satisfy the following pointwise bounds:
\begin{eqnarray*}
|\tilde{u}(x+\tilde{\alpha}(x,t),t)-\bar{u}(x)|&\leq&
CE_0(1+t)^{-\frac{1}{2}}e^{-\frac{\eta_0}{2}t-\frac{|x|^2}{2M(1+t)}},\\
|\tilde{\alpha}(x,t)|&\leq&CE_0\left|\mathrm{errfn}\left(\frac{x+t}{\sqrt{Mt}}\right)
-\mathrm{errfn}\left(\frac{x-t}{\sqrt{Mt}}\right)\right|,\\
|\partial_t^k\partial_x^m\tilde{\alpha}(x,t)|&\leq& 
%CHANGED: put in time decay, important!
	%CE_0 
CE_0(1+t)^{-\frac{1}{2}}
%ENDCHANGED
	\left(e^{-\frac{|x+t|^2}{Mt}}+e^{-\frac{|x-t|^2}{Mt}}\right), 
%CHANGED: also, added this:
	\quad k+m\geq 1,
%ENDCHANGED
\end{eqnarray*}
for some $\eta_0>0$ small enough and $C>0$ large enough.
\end{thm}

Note that, differently than in Theorem \ref{mainthm}, the phase shift $\tilde{\alpha}(x,t)$ in Theorem \ref{mainthm2} is allowed to vary in $x$ as well as $t$, paralleling the linear behavior of $\tilde{e}$.

%CHANGED: added these guiding remarks:
%\begin{rem}\label{asymptotic}
From the bounds of Theorem \ref{mainthm}, the phase shift $\alpha(t)$ converges exponentially to a constant shift $\alpha_\infty$, yielding 
the standard result \cite{Sa,He} that, for any $1\leq p\leq \infty$, for small $L^p$ perturbations,  $\tilde u$ converges time-exponentially in $L^p$ 
to a translate $\bar u(\cdot -\alpha_\infty)$ of the background traveling wave.
Comparing to the description of $\tilde u$ in Theorem \ref{mainthm2}, we find therefore for small Gaussian perturbations
that $\tilde \alpha(x,t)$ converges pointwise uniformly time-exponentially to a constant value $\tilde\alpha_\infty=\alpha_\infty$
on an expanding cone $\mathcal{C}:=\{|x|\leq \theta t\}$, some $\theta>0$.
(Indeed, this could be obtained directly from the analysis as well, by the observation that $\tilde e(x,t;y)e^{-\eta_0 |y|}$ 
converges time-exponentially to a constant on $\mathcal{C}$.)
This gives the refined, pointwise picture of convergence to a constant shift on the parabolic domain of dependence of the perturbation
data, with falloff at Gaussian rate as $x\to \pm\infty$ toward the unperturbed traveling wave $\bar u$.
%ENDCHANGED
%\end{rem}

\subsection*{\bf{Discussion}}\label{disc}
Stability of reaction diffusion fronts has been much studied, by a variety of techniques.
Indeed, the bounds on $\tilde G$ stated in Proposition \ref{Proposition1.2} may be recognized as exactly what one expects for a sectorial
ordinary differential operator possessing an exponential dichotomy, or, equivalently, a spectral gap, and could be obtained by a number of different
(standard) methods.
They serve here, along with the argument for Theorem \ref{mainthm}, as a bridge linking such standard methods with the approach used to establish Proposition
\ref{Proposition1.3} and Theorem \ref{mainthm2}, which represent the novel aspects of this work.
To our knowledge, no such results have up to now been obtained, despite the long history of the subject, and the naturality of the question they answer: of
how PDE properties such as parabolicity intervene in the ODE-like asymptotic behavior of the solution.

The latter are obtained by suitable adaptations of the ``pointwise semigroup'' methods introduced in \cite{ZH,MaZ} for the treatment of the neutral case of
stability of viscous shock waves, for which the associated linearized operator does not possess a spectral gap; see \cite{Z,Li} for particularly accessible
accounts specializing to the scalar case.
The treatment here of the more standard case with spectral gap both illuminates the method, and shows that it can give new details even in the background of
strong, time-exponential decay.
In the pointwise semigroup method, the resolvent and solution operators are replaced by their kernels, which may then be studied separately in different
$(x,y,t)$ domains; this is particularly natural for questions as addressed here of behavior in specific asymptotic situations.

Note that, in the nonlinear part of the argument, we have generalized the approach of \cite{ZH,MaZ} by allowing phase shifts depending on $x$ as well as $t$.  This is similar to, and motivated by, techniques introduced for the study of stability of periodic traveling wave \cite{JZ,JNRZ}, where it is crucial for completion of a nonlinear stability argument.  Our results here show that, also in the traveling front or pulse case, where it is not needed in order to close a nonlinear stability argument, allowing $\alpha$ to depend on $x$ is a useful tool, that can be used to obtain additional details about behavior.

Note, finally, that the Green function bounds of Proposition \ref{Proposition1.3} both recover and extend the classical Nash-Aronson bounds, identifying
a "parabolic" regime $|x-y|\gg t$ on which they dominate behavior, including, but not limited to the classical bounded-time regime.

\medskip

%\subsection{Acknowledgment}
%\subsection*{\bf{Outline of Proof}}\label{disc2}
\noindent {\bf Outline of Proof}:
We will give a motivation of the pointwise semigroup methods in section \ref{PointwiseSemigroup}. In section \ref{AsymptoticEigenvalueEq}, we will show that the operator $\mathbb{A}(x;\lambda)$ associated with the eigenvalue equation of the linearized operator $L$ of equation \eqref{readiff} is asymptotically constant in $x$ when $|x|\to \infty$, and we will give a construction of the resolvent kernel $G_{\lambda}$ in section \ref{ConstructionoftheResolventKernel}, then we use this construction to give bounds on the resolvent kernel in sections \ref{LowandBoundedFrequencyBounds} and \ref{HighFrequencyBounds}. After that, in sections \ref{PointwiseBoundsontheGreenFunction} and \ref{ImprovedPointwiseBoundsontheGreenFunction}, we use these bounds on the resolvent kernel to get the bounds on the Green function, proving Propositions \ref{Proposition1.2} and \ref{Proposition1.3}. With the Green function kernel estimates in hand, we can go on to prove some $L^p\to L^p$ estimates on the Green function $G(x,t;y)$ in section \ref{LinearGreenEst}. With all these preparations done, we define and prove estimates on the time dependent translate $\alpha(t)$ in section \ref{Section_Integral_rep} and \ref{LpNonlinearIterationandLpNonlinearStability}, proving Theorem \ref{mainthm}. In section \ref{IntegralRepresentationforHKandPointwiseIterationSchemes}, we define the time and space dependent translate $\tilde{\alpha}(x,t)$;
its estimates as well as estimates on the perturbation $v(x,t)$ are given in section \ref{HKNonlinearIteration} and section \ref{PointwiseNonlinearIterationandPointwiseBoundonthePerturbation}, proving Theorem \ref{mainthm2}.

\medskip

\noindent{\bf Notation:} 
We use $C$ to denote a universal constant that may change from line to line but is independent of 
parameters, initial data, space, or time. We use the notation $f=\mathcal{O}(g)$ to mean that $|f|\leq C|g|$. 

\section{Pointwise Semigroup Methods} \label{PointwiseSemigroup}

We study the \textit{resolvent kernel} $G_{\lambda}(x,y)$, defined formally by
%\begin{equation*}
$$
G_{\lambda}(\cdot,y):=(L-\lambda I)^{-1}\delta_y,
$$
%\end{equation*}
or equivalently
%\begin{equation*}
$
(L-\lambda I)^{-1}f(x)=\int G_{\lambda}(x,y)f(y)dy,
$
%\end{equation*}
that is, the elliptic Green function associated with $(L-\lambda I)$. At an isolated eigenvalue $\lambda_0$ of $L$, the spectral projection operator can be defined by
\begin{equation*}
\mathcal{P}_{\lambda_0}=\mathrm{Res}_{\lambda_0}(L-\lambda I)^{-1}.
\end{equation*}

The operator $L$ is sectorial, so we have the spectral resolution formula,
\begin{equation} \label{spectralresolution}
e^{Lt}=\frac{1}{2\pi i}\int_{\Gamma}e^{\lambda t}(L-\lambda I)^{-1}d\lambda,
\end{equation}
for the solution operator $e^{Lt}$ of $v_t=Lv;\quad v(0)=v_0$, where $\Gamma$ is the boundary of an appropriate sector
$\{\lambda:\mathrm{Re}\lambda<\theta_1-\theta_2|\mathrm{Im}\lambda|\}$ containing the spectrum of $L$, $\theta_1,\theta_2>0$ are constants. We have assumed in assumption $(\mathcal{D})$ that $L$ has an isolated simple eigenvalue at $\lambda=0$, the rest of the spectrum is separated by a \textit{positive spectral gap} $\eta>0$, $\sigma(L)\setminus \{0\} \subset \{\lambda: \mathrm{Re}\lambda\leq -\eta \}$.
Defining $\tilde{\Gamma}$ as the boundary of the set
$\Omega:=\{\lambda:\mathrm{Re}\lambda<\theta_1-\theta_2|\mathrm{Im}\lambda|\}\cap\{\lambda: \mathrm{Re}\lambda\leq -\eta/2\}$, we have by \eqref{spectralresolution}, together with Cauchy's theorem, that
\begin{eqnarray*}
e^{Lt}=\mathrm{Res}_0 e^{\lambda t}(L-\lambda I)^{-1}
+\frac{1}{2\pi i}\int_{\tilde{\Gamma}}e^{\lambda t}(L-\lambda I)^{-1}d\lambda.
\end{eqnarray*}
Applying both sides of the above equation to $\delta_y(x)$ gives
\begin{eqnarray*}
G(x,t;y)
&=&\bar{u}^{\prime}(x)\left(\frac{1}{2\pi i}\int_{\partial B(0,\epsilon)}\frac{e^{\lambda t}}{\lambda}d\lambda\right)\tilde{\psi}(y)
   +\frac{1}{2\pi i}\int_{\tilde{\Gamma}}e^{\lambda t}{G}_{\lambda}(x,y)d\lambda\\
&=&\bar{u}^{\prime}(x)\tilde{\psi}(y)+\frac{1}{2\pi i}\int_{\tilde{\Gamma}}e^{\lambda t}{G}_{\lambda}(x,y)d\lambda\\
&=&\bar{u}^{\prime}(x)\chi(t)\tilde{\psi}(y)+\bar{u}^{\prime}(x)(1-\chi(t))\tilde{\psi}(y)
   +\frac{1}{2\pi i}\int_{\tilde{\Gamma}}e^{\lambda t}{G}_{\lambda}(x,y)d\lambda\\
&=&\bar{u}^{\prime}(x)e(y,t)+\tilde{G}(x,t;y),
\end{eqnarray*}
where we have defined $e(y,t)=\chi(t)\tilde{\psi}(y)$, and
%\begin{eqnarray*}
	$$
\tilde{G}(x,t;y)=\bar{u}^{\prime}(x)(1-\chi(t))\tilde{\psi}(y)
   +\frac{1}{2\pi i}\int_{\tilde{\Gamma}}e^{\lambda t}{G}_{\lambda}(x,y)d\lambda.
   $$
%\end{eqnarray*}
The cutoff function $\chi(t)\in C^{\infty}(\mathbb{R}^{+})$ is identically $0$ for $0\leq t\leq 1$, and identically $1$ for $t\geq 2$.

\section{The Asymptotic Eigenvalue Equations} \label{AsymptoticEigenvalueEq}

The eigenvalue equation $Lw=\lambda w$ associated with \eqref{readifflinearoperator} is
\begin{equation}
w_{xx}+Df(\bar{u})w=\lambda w \label{assoeigen}.
\end{equation}
Written as a first-order system in the variable $W=(w,w^{\prime})^t$, this becomes
\begin{equation} \label{firstorderode}
W^{\prime}=\mathbb{A}(x;\lambda)W,
\end{equation}
where
$
\mathbb{A}(x;\lambda):=
  \begin{pmatrix}
   0 & I\\
   -(\lambda I+Df(\bar{u})) & 0
  \end{pmatrix}.
$
We begin by studying the limiting, constant coefficient systems $L_{\pm}w=\lambda w$ of \eqref{assoeigen} at $x=\pm \infty$,
\begin{equation}
w_{xx}+Df(u_{\pm})w=\lambda w, \label{asymode}
\end{equation}
or, written as a first-order system,
\begin{equation} \label{asymfirstorderode}
W^{\prime}=\mathbb{A}_{\pm}(\lambda)W,
\end{equation}
where
$
\mathbb{A}_{\pm}(\lambda):=
  \begin{pmatrix}
   0 & I\\
   -(\lambda I+Df(u_{\pm})) & 0
  \end{pmatrix}.
$

The normal modes of \eqref{asymfirstorderode} are $V_j^{\pm}e^{\mu_j^{\pm}x}$, $j=1,2,\ldots,2n$, where $\mu_j^{\pm}$, $V_j^{\pm}$ are the eigenvalues and eigenvectors of $\mathbb{A}_{\pm}$; these are easily seen to satisfy
$
V_j^{\pm}=
 \begin{pmatrix}
 v_j^{\pm}\\
 \mu_j^{\pm}v_j^{\pm}
 \end{pmatrix}, v_j^{\pm} \in \mathbb{C}^{n}
$
and
$[(\mu_j^{\pm})^2 I+\lambda I+Df(u_{\pm})]v_j^{\pm}=0$.
Let $\sigma_1^{\pm},\sigma_2^{\pm},\ldots,\sigma_n^{\pm}$ be the eigenvalues of $Df(u_{\pm})$. Then we have the equation
$
(\mu_j^{\pm})^2+\lambda=-\sigma_j^{\pm}
$,
and solve for $\mu_j^{\pm}$ to get
$\mu_j^{\pm}=\pm\sqrt{-\sigma_j^{\pm}-\lambda}$. Since we have assumed that $\mathrm{Re}\sigma_j^{\pm}<0$ for all $j=1,2,\ldots,n$, and $\lambda$ is on a contour which lies completely in the half plane $\{\mathrm{Re}\lambda <0\}$,
then one of $\sqrt{-\sigma_j^{\pm}-\lambda}$ and $-\sqrt{-\sigma_j^{\pm}-\lambda}$ has positive real part while the other has negative real part. After some rearrangement we have the following:

\begin{pro} \label{Proposition4.1}
For some $\eta^{\prime}>0$, there locally exist analytic choices\\
$\mathrm{Re}\mu_1^{\pm},\ldots,\mathrm{Re}\mu_n^{\pm}\leq -\eta^{\prime}<0
<\eta^{\prime}\leq\mathrm{Re}\mu_{n+1}^{\pm},\ldots,\mathrm{Re}\mu_{2n}^{\pm}$, and $V_1^{\pm},\ldots,V_{2n}^{\pm}$ for the eigenvalues and eigenvectors of $\mathbb{A}_{\pm}(\lambda)$, satisfying
$-\mathrm{Re}\mu_{j}^{\pm}=\mathrm{Re}\mu_{n+j}^{\pm}\geq \eta^{\prime}>0$ for $j=1,2,\ldots,n$ and
\begin{eqnarray*}
\mu_j^{\pm}(\lambda)=-\gamma_j^{\pm}-a_j^{\pm}\lambda-b_j^{\pm}\lambda^2+\mathcal{O}(\lambda^3),& &
\mu_{n+j}^{\pm}(\lambda)=\gamma_j^{\pm}+a_j^{\pm}\lambda+b_j^{\pm}\lambda^2+\mathcal{O}(\lambda^3),\\
V_j^{\pm}(\lambda)=\begin{pmatrix}r_j^{\pm}+\mathcal{O}(\lambda) \\ -\gamma_j^{\pm} r_j^{\pm}+\mathcal{O}(\lambda)\end{pmatrix},& &
V_{n+j}^{\pm}(\lambda)=\begin{pmatrix}r_j^{\pm}+\mathcal{O}(\lambda) \\ \gamma_j^{\pm} r_j^{\pm}+\mathcal{O}(\lambda)\end{pmatrix}.
\end{eqnarray*}
as $\lambda \to 0$, where $\gamma_j^{\pm}, a_j^{\pm}$ and $b_j^{\pm}$ are some constants such that $\mathrm{Re}\gamma_j^{\pm}\geq \eta^{\prime}>0$ for $j=1,2,\ldots,n$. $r_j^{\pm}$ are the right eigenvectors of $Df(u_{\pm})$ corresponding to eigenvalues $\sigma_j^{\pm}$.
\end{pro}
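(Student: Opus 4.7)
The plan is to exploit the block anti-diagonal structure of $\mathbb{A}_\pm(\lambda)$ to reduce the $2n$-dimensional eigenvalue problem to a pair of $n$-dimensional problems on $Df(u_\pm)$, and then to construct analytic branches of the resulting square roots. First, if $V=(v,w)^t$ satisfies $\mathbb{A}_\pm(\lambda)V=\mu V$, then $w=\mu v$ and $-(\lambda I+Df(u_\pm))v=\mu w=\mu^2 v$; that is, $v$ is an eigenvector of the fixed matrix $Df(u_\pm)$ with eigenvalue $-\mu^2-\lambda$. The $2n$ eigenvalues of $\mathbb{A}_\pm(\lambda)$ therefore come in pairs $\pm\sqrt{-\sigma_j^\pm-\lambda}$, one pair for each eigenvalue $\sigma_j^\pm$ of $Df(u_\pm)$ (counted with multiplicity), with corresponding eigenvectors of the form $(v,\mu v)^t$.

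Next I would select the analytic branch. By the end-state stability consequence of $(\mathcal{D})$, $\mathrm{Re}\,\sigma_j^\pm<0$, so each $-\sigma_j^\pm$ lies in the open right half-plane, at positive distance from the branch cut $(-\infty,0]$ of the principal square root. For $|\lambda|$ sufficiently small, $-\sigma_j^\pm-\lambda$ remains in that half-plane, and I set $\gamma_j^\pm(\lambda):=\sqrt{-\sigma_j^\pm-\lambda}$ using the principal branch; this is analytic in $\lambda$ with $\mathrm{Re}\,\gamma_j^\pm(\lambda)>0$. Labeling $\mu_{n+j}^\pm:=\gamma_j^\pm(\lambda)$ and $\mu_j^\pm:=-\gamma_j^\pm(\lambda)$ enforces the stated sign convention, and continuity together with finiteness of $\{\sigma_j^\pm\}$ supplies a uniform lower bound $\mathrm{Re}\,\gamma_j^\pm\geq\eta'>0$ on a small disk about $\lambda=0$. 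Taylor expansion of $f(\lambda):=\sqrt{-\sigma_j^\pm-\lambda}$ at $\lambda=0$ then yields
$$
\gamma_j^\pm(\lambda)=\gamma_j^\pm(0)-\frac{\lambda}{2\gamma_j^\pm(0)}-\frac{\lambda^2}{8\,\gamma_j^\pm(0)^3}+\mathcal{O}(\lambda^3),
$$
which identifies the coefficients $\gamma_j^\pm:=\gamma_j^\pm(0)=\sqrt{-\sigma_j^\pm}$, $a_j^\pm=-1/(2\gamma_j^\pm)$, $b_j^\pm=-1/(8(\gamma_j^\pm)^3)$.

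For the eigenvectors, since the choice $\mu=\pm\gamma_j^\pm(\lambda)$ gives $-\mu^2-\lambda=\sigma_j^\pm$ identically in $\lambda$, the $v$-component must be a ($\lambda$-independent) eigenvector of $Df(u_\pm)$ at eigenvalue $\sigma_j^\pm$, which I take to be $r_j^\pm$. The lower component is then $\mu v=\pm\gamma_j^\pm(\lambda)r_j^\pm=\pm\bigl(\gamma_j^\pm+\mathcal{O}(\lambda)\bigr)r_j^\pm$, matching the claimed form of $V_j^\pm$ and $V_{n+j}^\pm$ (the $\mathcal{O}(\lambda)$ in the top slot simply accommodates an arbitrary analytic normalization). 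The main potential obstacle is a failure of semisimplicity of $Df(u_\pm)$: if some $\sigma_j^\pm$ corresponds to a nontrivial Jordan block, then the ordinary eigenvectors no longer span the generalized eigenspace, and one would either need to resolve the degeneracy through higher-order (Puiseux) perturbation theory or to restrict to the semisimple case tacitly assumed by the statement, which already refers to a basis of right eigenvectors $\{r_j^\pm\}$. Under semisimplicity, the construction above applies componentwise to each $\sigma_j^\pm$-eigenspace and delivers the full set of $2n$ analytic branches with the stated expansion.
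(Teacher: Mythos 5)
Your argument matches the paper's own (brief) derivation in Section 3: reduce the $2n\times 2n$ eigenvalue problem to the scalar quadratic $\mu^2+\lambda=-\sigma_j^\pm$ via the anti-block structure of $\mathbb{A}_\pm$, then select the analytic square-root branch using $\mathrm{Re}\,\sigma_j^\pm<0$ to enforce the sign split, with the Taylor coefficients $a_j^\pm=-1/(2\gamma_j^\pm)$, $b_j^\pm=-1/(8(\gamma_j^\pm)^3)$ agreeing with the stated expansion. Your observation that the statement tacitly assumes $Df(u_\pm)$ has a full eigenvector basis $\{r_j^\pm\}$ is a fair caveat that the paper itself leaves unremarked.
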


\begin{lem}
The adjoint of the operator $Lw=w_{xx}+Df(\bar{u})w$ is $L^{\ast}z=z_{xx}+zDf(\bar{u})$.
\end{lem}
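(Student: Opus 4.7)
The plan is to verify the adjoint identity $\langle Lw, z\rangle = \langle w, L^{\ast}z\rangle$ directly, using the standard $L^2$ pairing
$$\langle w, z\rangle := \int_{-\infty}^{\infty} z(x)\,w(x)\,dx,$$
where $w$ is viewed as a column and $z$ as a row, so $z\,w$ is a scalar. It suffices to check this on the dense subspace of compactly supported smooth functions, then extend by continuity/density to the natural domain of $L$ (which is standard given sectoriality of $L$).

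First I would split $L$ into its two summands. For the second-order piece, integration by parts twice in $x$ moves both derivatives onto $z$, with boundary terms vanishing by compact support:
$$\int z(x)\,w_{xx}(x)\,dx = \int z_{xx}(x)\,w(x)\,dx.$$
For the zeroth-order piece, I would simply use associativity of matrix multiplication: $z(x)\bigl(Df(\bar u(x))\,w(x)\bigr) = \bigl(z(x)\,Df(\bar u(x))\bigr)\,w(x)$, no derivatives needed. Adding the two pieces gives
$$\langle Lw, z\rangle = \int \bigl(z_{xx} + z\,Df(\bar u)\bigr)\,w\,dx,$$
from which one reads off $L^{\ast}z = z_{xx} + z\,Df(\bar u)$.

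There is no real obstacle here: the only point requiring care is remembering that $Df(\bar u)$ is in general not symmetric, so the reaction term in $L^{\ast}$ involves $z$ multiplying $Df(\bar u)$ from the \emph{left} (as a row vector), rather than $Df(\bar u)^T$ acting on $z$ as a column. Geometrically this is the same statement — one recovers the transposed-matrix form if one prefers to write $z$ as a column — but the row-vector convention used here matches how $\tilde\psi$ appears in the spectral projection $\bar u'(x)\tilde\psi(y)$ of Section 2, and so is the convenient choice for the subsequent analysis.
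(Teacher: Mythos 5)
Your proposal is correct and follows essentially the same route as the paper: integrate by parts twice to move $\partial_x^2$ onto $z$, and use associativity for the zeroth-order term, reading off $L^{\ast}z = z_{xx} + zDf(\bar u)$. The extra remarks on density and on the row-vector convention for $z$ are consistent with the paper's (unstated) conventions and do not change the argument.
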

\begin{proof}
Let $A(x)=Df(\bar{u}(x))$, then the conclusion follows from
\begin{eqnarray*}
\langle Lw,z\rangle&=&\int zLwdx=\int z(w_{xx}+Aw)dx=\int zw_{xx}dx+\int zAw dx\\
&=&\int z_{xx}wdx+\int zAwdx=\int(z_{xx}+zA)wdx=\langle w,L^{\ast}z\rangle.
\end{eqnarray*}
\end{proof}

({\bf Note:} Here, $w$ is an $n$-dimensional column vector and $z$ is an $n$-dimensional row vector.)

\begin{lem}
Let $H_\lambda(x,y)$ denote the Green function for the adjoint operator $(L-\lambda I)^\ast$ of $L-\lambda I$. Then,
$H_\lambda(x,y)=G_\lambda(y,x)^\ast$, where $M^{\ast}$ denotes $\overline{M^t}$ for a complex matrix $M$. In particular, for $y \neq x$ and $x$ fixed, the matrix $z(y)=G_\lambda(x,y)$ satisfies
 \begin{eqnarray}
 z_{yy}+zDf(\bar{u}(y))=\lambda z. \label{adjointeigen}
 \end{eqnarray}
\end{lem}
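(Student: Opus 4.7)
The plan is to derive the kernel identity $H_\lambda(x,y)=G_\lambda(y,x)^{\ast}$ from the abstract operator duality
$$
\bigl((L-\lambda I)^{-1}\bigr)^{\ast}=\bigl((L-\lambda I)^{\ast}\bigr)^{-1},
$$
and then to read off the ``in particular'' assertion as an immediate corollary of off-diagonal elliptic regularity.

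First, I would fix the pairing from the preceding lemma, $\langle w,z\rangle=\int z w\,dx$, so that $(L-\lambda I)$ acts on column-valued $w$ while $(L-\lambda I)^{\ast}$ acts on row-valued $z$ via $L^{\ast}z=z_{xx}+z\,Df(\bar u)$. Pick arbitrary compactly supported column $f$ and row $g$, and substitute the resolvent kernel representations of $(L-\lambda I)^{-1}f$ and $\bigl((L-\lambda I)^{\ast}\bigr)^{-1}g$ into the two sides of
$$
\langle g,(L-\lambda I)^{-1}f\rangle=\langle \bigl((L-\lambda I)^{\ast}\bigr)^{-1}g,f\rangle .
$$
A standard Fubini/change-of-dummy argument converts this into a single iterated integral identity; since it must hold for all $f,g$, the integrands agree a.e., giving exactly $H_\lambda(x,y)=G_\lambda(y,x)^{\ast}$.

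For the pointwise PDE in the second assertion, I would invoke the standard fact that the Green function of a second-order sectorial operator on $\mathbb R$ solves the associated homogeneous equation off the diagonal. Applied to $H_\lambda$, this says: for $y\neq x$, $H_\lambda(\cdot,y)$ as a function of its first argument satisfies $(L^{\ast}-\lambda I)[\,\cdot\,]=0$. By the identity just proved, $H_\lambda(\cdot,y)=G_\lambda(y,\cdot)^{\ast}$, so $G_\lambda(y,\cdot)^{\ast}$ as a function of the second argument satisfies the adjoint equation in row-vector form. Relabeling $x\leftrightarrow y$ and unpacking the row formulation $L^{\ast}z-\lambda z=z_{yy}+z\,Df(\bar u(y))-\lambda z$ yields (\ref{adjointeigen}) for $z(y)=G_\lambda(x,y)$ on $y\neq x$.

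The main obstacle is not analytic but notational: keeping straight the row versus column conventions so that the transpose $^{\ast}$ lands on the correct side of $Df(\bar u)$ in the adjoint ODE, and so that the integral-kernel representation of the adjoint resolvent agrees with the pairing convention used above. Once those conventions are pinned down, the proof reduces to a standard duality/integration-by-parts computation together with off-diagonal ellipticity, both entirely classical.
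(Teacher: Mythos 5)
The paper states this lemma without proof, treating it as a standard fact about Green functions of elliptic operators and their adjoints; so there is no paper proof to compare against. Your two-step outline (abstract duality plus Fubini to get the kernel identity, then off-diagonal elliptic regularity to read off the PDE) is indeed the classical route and is correct in substance.

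One caveat, however, is worth making precise rather than merely flagging, because it is exactly the pitfall you anticipate. You set up $L^{\ast}$ as acting on \emph{row} vectors by $L^{\ast}z = z_{xx} + zDf(\bar u)$, consistent with the bilinear pairing $\langle w,z\rangle = \int zw\,dx$ used in the preceding lemma. Under that convention, if you run your Fubini computation you find
\begin{equation*}
\bigl[(L^{\ast}-\lambda)^{-1}z\bigr](y) \;=\; \int z(x)\,G_{\lambda}(x,y)\,dx,
\end{equation*}
i.e.\ the adjoint resolvent has kernel $G_{\lambda}$ itself acting on the \emph{left}; the Green function $H_{\lambda}$ for the row-acting adjoint is thus $H_{\lambda}(x,y) = G_{\lambda}(y,x)$ with \emph{no} transpose. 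The formula $H_{\lambda}(x,y)=G_{\lambda}(y,x)^{\ast}$ as stated in the lemma instead corresponds to defining $H_{\lambda}$ as the Green function for the transpose (or Hilbert-space adjoint) operator acting on \emph{columns}, $L^{\top}v = v_{xx} + Df(\bar u)^{\top}v$. These are equivalent modulo a matrix transpose, but your off-diagonal step must be run against the right version: if you use $H_{\lambda}(\cdot,y)$ satisfying $(L^{\ast}-\lambda)[\,\cdot\,]=0$ with the row convention you wrote, then $H_{\lambda}$ actually obeys $H_{\lambda,xx}+Df(\bar u)^{\top}H_{\lambda}=\lambda H_{\lambda}$ (each \emph{column} is a solution), not $H_{\lambda,xx}+H_{\lambda}Df(\bar u)=\lambda H_{\lambda}$ (each \emph{row} a solution), and it is the extra transpose in $H_{\lambda}=G_{\lambda}(y,x)^{\ast}$ that converts one form to the other. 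As written, your proposal conflates these, and the final relabeling step is not airtight.

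The cleanest way to close the gap is to skip the detour through $H_{\lambda}$ entirely: take the Fubini identity above, apply $(L^{\ast}_y-\lambda)$ under the integral sign, and equate with $z(y)=\int z(x)\,\delta_x(y)\,I\,dx$ to get directly
\begin{equation*}
\partial_y^2 G_{\lambda}(x,y) + G_{\lambda}(x,y)\,Df(\bar u(y)) - \lambda G_{\lambda}(x,y) = \delta_x(y)\,I,
\end{equation*}
which gives \eqref{adjointeigen} for $y\neq x$ at once. This keeps everything in a single convention and never introduces a transpose that must later be undone.
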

Consider \eqref{adjointeigen} as an ODE for general row vector $z$, or, written as a first order system,
\begin{eqnarray} \label{adjointfirstorderode}
Z^\prime=Z\tilde{\mathbb{A}}(x;\lambda),
\end{eqnarray}
where $Z=(z,z^{\prime})$ and
$
\tilde{\mathbb{A}}(x;\lambda):=
  \begin{pmatrix}
   0 & -(\lambda I+Df(\bar{u}))\\
   I & 0
  \end{pmatrix}.
$
\begin{lem} \label{Lemma_Adjoint}
$Z$ is a solution of \eqref{adjointfirstorderode} if and only if $Z\mathcal{S}W\equiv \rm{Constant}$ for any solution $W$ of \eqref{firstorderode}, where
$
\mathcal{S}=
  \begin{pmatrix}
   0 & I\\
   -I & 0
  \end{pmatrix}.
$
\end{lem}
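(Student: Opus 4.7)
The natural approach is a direct differentiation: compute
$\frac{d}{dx}(Z\mathcal{S}W)$ using the product rule, substitute both ODEs where possible, and read off the iff. Concretely, for $W$ satisfying \eqref{firstorderode} and \emph{any} smooth row vector $Z$,
$$
\frac{d}{dx}\bigl(Z\mathcal{S}W\bigr)=Z'\mathcal{S}W+Z\mathcal{S}\mathbb{A}W.
$$
Thus, once the algebraic identity $\tilde{\mathbb{A}}\mathcal{S}+\mathcal{S}\mathbb{A}=0$ is verified, the forward direction is immediate: if $Z'=Z\tilde{\mathbb{A}}$, then the right-hand side becomes $Z(\tilde{\mathbb{A}}\mathcal{S}+\mathcal{S}\mathbb{A})W=0$, so $Z\mathcal{S}W$ is constant.

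The key step is this algebraic identity, which is a one-line matrix multiplication once one writes $B:=\lambda I+Df(\bar u)$ and computes
$$
\tilde{\mathbb{A}}\mathcal{S}=\begin{pmatrix}0 & -B\\ I & 0\end{pmatrix}\begin{pmatrix}0 & I\\ -I & 0\end{pmatrix}=\begin{pmatrix}B & 0\\ 0 & I\end{pmatrix},\qquad \mathcal{S}\mathbb{A}=\begin{pmatrix}0 & I\\ -I & 0\end{pmatrix}\begin{pmatrix}0 & I\\ -B & 0\end{pmatrix}=\begin{pmatrix}-B & 0\\ 0 & -I\end{pmatrix},
$$
and these sum to zero. (Conceptually, $\mathcal{S}$ is the standard symplectic form that converts transposition into the system adjoint when a scalar second-order equation is written as a first-order $2n\times 2n$ system; this is the same principle that underlies the Wronskian identity for Hamiltonian ODEs, so no surprise should be expected.)

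For the converse, assume $Z\mathcal{S}W$ is constant for every solution $W$ of \eqref{firstorderode}. Choosing $W$ equal in turn to the columns of a fundamental matrix (equivalently, letting initial data $W(x_0)$ range over a basis of $\mathbb{C}^{2n}$), one obtains
$$
\bigl(Z'\mathcal{S}+Z\mathcal{S}\mathbb{A}\bigr)W(x)\equiv 0\quad\text{for all such }W,
$$
so the row vector $Z'\mathcal{S}+Z\mathcal{S}\mathbb{A}$ vanishes identically. Using $\mathcal{S}^2=-I$, hence $\mathcal{S}^{-1}=-\mathcal{S}$, one solves to get $Z'=-Z\mathcal{S}\mathbb{A}\mathcal{S}^{-1}=Z\mathcal{S}\mathbb{A}\mathcal{S}$, and another quick multiplication $\mathcal{S}\mathbb{A}\mathcal{S}=\tilde{\mathbb{A}}$ (equivalent to the identity above) yields $Z'=Z\tilde{\mathbb{A}}$, i.e. \eqref{adjointfirstorderode}.

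No real obstacle is anticipated; the only thing to be careful about is keeping the left-versus-right multiplication consistent throughout, since $Z$ is a row vector and $W$ is a column vector, and that the converse requires invoking existence of enough solutions $W$ (via the fundamental matrix) to conclude that the bracketed coefficient, not just its action on a single $W$, must vanish.
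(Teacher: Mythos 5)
Your proof is correct, and it takes a genuinely different route from the paper's. The paper works at the level of the second-order equations: it unpacks $Z\mathcal{S}W = zw'-z'w$, differentiates, and substitutes $w''=\lambda w - Aw$ and $z''=\lambda z - zA$ directly; this is quick but it only establishes the ``if'' direction (constancy given that both $Z$ and $W$ solve their respective ODEs), and the converse is left implicit. Your proof instead stays entirely at the first-order matrix level and isolates the single symplectic-form identity $\tilde{\mathbb{A}}\mathcal{S}+\mathcal{S}\mathbb{A}=0$ as the algebraic heart of the matter, from which the forward direction falls out in one line and the converse follows cleanly by letting $W(x)$ range over a basis of $\mathbb{C}^{2n}$ (via the fundamental matrix) so that the row vector $Z'\mathcal{S}+Z\mathcal{S}\mathbb{A}$ must vanish identically, and then inverting $\mathcal{S}$. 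The paper's approach is more concrete and closer to the Wronskian intuition; yours is more structural, generalizes immediately to any companion-form system with the same symmetry, and---unlike the paper's---actually proves both implications of the stated iff.
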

\begin{proof}
 \begin{eqnarray*}
 (Z\mathcal{S}W)^{\prime}
 &=& (-z^{\prime}w+zw^{\prime})^{\prime}
 %\\ &=&
 =-z^{\prime\prime}w-z^{\prime}w^{\prime}+z^{\prime}w^{\prime}+zw^{\prime\prime}\\
 &=& -z^{\prime\prime}w+zw^{\prime\prime}
 %\\ &=&
 =-(\lambda z-zA)w+z(\lambda w-Aw)
 \\ &=&
 -\lambda zw+zAw+\lambda zw-zAw=0.
 \end{eqnarray*}
\end{proof}

Similarly, we define the adjoint asymptotic matrices
%\begin{equation*}
$$
\tilde{\mathbb{A}}_{\pm}(\lambda):=
  \begin{pmatrix}
   0 & -(\lambda I+Df(u_{\pm}))\\
   I & 0
  \end{pmatrix}.
$$
%\end{equation*}

\begin{pro}
Under assumption $(\mathcal{D})$, $|\mathbb{A}(x;\lambda)-\mathbb{A}_{\pm}(\lambda)|\leq Ce^{-\eta|x|}$ as $x\to \pm\infty$ and
$|\tilde{\mathbb{A}}(x;\lambda)-\tilde{\mathbb{A}}_{\pm}(\lambda)|\leq Ce^{-\eta|x|}$ as $x\to \pm\infty$.
\end{pro}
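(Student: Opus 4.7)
The plan is to reduce both bounds to the already-established exponential decay of the profile $\bar{u}(x)$ toward its endstates $u_{\pm}$, namely the estimate
$$|\bar u(x)-u_\pm|\le Ce^{-\eta|x|},\qquad x\gtrless 0,$$
which is the content of the Stable Manifold proposition stated earlier in the paper.

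First, I would compute the difference of the two matrices directly from the definitions. Because $\mathbb{A}(x;\lambda)$ and $\mathbb{A}_\pm(\lambda)$ differ only in their lower-left block, we find
$$\mathbb{A}(x;\lambda)-\mathbb{A}_\pm(\lambda)=\begin{pmatrix}0 & 0\\ -(Df(\bar u(x))-Df(u_\pm)) & 0\end{pmatrix},$$
so that in any matrix norm we have
$$|\mathbb{A}(x;\lambda)-\mathbb{A}_\pm(\lambda)|=|Df(\bar u(x))-Df(u_\pm)|.$$

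Next, since $f\in C^\infty$, the derivative $Df$ is locally Lipschitz, so on any bounded set containing $\bar u(\mathbb{R})\cup\{u_\pm\}$ there is a constant $L$ with
$$|Df(\bar u(x))-Df(u_\pm)|\le L\,|\bar u(x)-u_\pm|.$$
Combining this with the exponential profile decay above immediately yields
$$|\mathbb{A}(x;\lambda)-\mathbb{A}_\pm(\lambda)|\le CL\,e^{-\eta|x|}$$
as $x\to\pm\infty$, which is the first claim. The estimate is uniform in $\lambda$ because $\lambda$ enters $\mathbb{A}(x;\lambda)-\mathbb{A}_\pm(\lambda)$ only through the cancellation in the $\lambda I$ term, which drops out entirely.

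For the adjoint version, the matrices $\tilde{\mathbb{A}}(x;\lambda)$ and $\tilde{\mathbb{A}}_\pm(\lambda)$ have the identical block structure up to a transposition of blocks, so their difference is again governed solely by $Df(\bar u(x))-Df(u_\pm)$, and the same argument applies verbatim. There is no real obstacle here: the proposition is essentially a restatement of the profile decay filtered through the definitions of $\mathbb{A}$ and $\tilde{\mathbb{A}}$, and the only ingredient beyond bookkeeping is the elementary Lipschitz estimate on $Df$. The reason it is worth stating explicitly is that it is the structural hypothesis needed to apply the conjugation/gap lemma machinery (asymptotic constancy of coefficients at exponential rate) that controls the resolvent kernel $G_\lambda$ in the subsequent pointwise semigroup estimates.
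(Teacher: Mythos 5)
Your proof is correct and is exactly the argument the paper has in mind; the paper merely says ``This follows immediately from \eqref{profileproperty},'' and you have supplied the routine details (block structure of the difference, local Lipschitz continuity of $Df$, cancellation of the $\lambda I$ term) that the authors elide.
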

\begin{proof}
%Can be proved
	This follows immediately from \eqref{profileproperty}.
\end{proof}

\section{Construction of the Resolvent Kernel} \label{ConstructionoftheResolventKernel}

Define $$W_1^{+}(x;\lambda),W_2^{+}(x;\lambda),\ldots,W_{2n}^{+}(x;\lambda)$$ and $$W_1^{-}(x;\lambda),W_2^{-}(x;\lambda),\ldots,W_{2n}^{-}(x;\lambda)$$ as two bases of solutions to \eqref{firstorderode}, and $$\tilde{W}_1^{+}(x;\lambda),\tilde{W}_2^{+}(x;\lambda),\ldots,\tilde{W}_{2n}^{+}(x;\lambda)$$ and  $$\tilde{W}_1^{-}(x;\lambda),\tilde{W}_2^{-}(x;\lambda),\ldots,\tilde{W}_{2n}^{-}(x;\lambda)$$ as two bases of solutions to the adjoint first order ODE \eqref{adjointfirstorderode}, satisfying the relations
\begin{align} \label{duality}
\begin{aligned}
\tilde{W}_j^{+}\mathcal{S}W_k^{+}&=\delta_k^j\\
\tilde{W}_j^{-}\mathcal{S}W_k^{-}&=\delta_k^j
\end{aligned}
\end{align}

Applying the Gap Lemma of \cite{GZ,ZH} relating variable- to
constant-coefficient solutions, we obtain:
\begin{pro}
We have the following asymptotic description of $\tilde{W}_j^{\pm}$ and $W_k^{\pm}$:
\begin{eqnarray*}
\tilde{W}_j^{\pm}(x;\lambda)&=&\tilde{V}_j^{\pm}(\lambda)e^{-\mu_j^{\pm}(\lambda)x}(1+\mathcal{O}(e^{-(\eta/2)|x|})),
\quad \mathrm{for} \quad j=1,2,\ldots,2n;\\
W_k^{\pm}(x;\lambda)&=&V_k^{\pm}(\lambda)e^{\mu_k^{\pm}(\lambda)x}(1+\mathcal{O}(e^{-(\eta/2)|x|})),
\quad \mathrm{for} \quad k=1,2,\ldots,2n,
\end{eqnarray*}
where $V_j^\pm$, $\mu_j^\pm$ are as in Proposition \ref{Proposition4.1}.
\end{pro}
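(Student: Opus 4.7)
The strategy is to invoke the Gap Lemma of \cite{GZ,ZH}, which allows one to continue constant-coefficient normal modes to variable-coefficient solutions while preserving the leading-order exponential behavior, provided the coefficient matrices converge to their asymptotic limits exponentially fast. The requisite data is in hand: by the preceding proposition, $\mathbb{A}$ and $\tilde{\mathbb{A}}$ converge to $\mathbb{A}_\pm$ and $\tilde{\mathbb{A}}_\pm$ at rate $Ce^{-\eta|x|}$, while Proposition \ref{Proposition4.1} provides $2n$ smooth eigenvalues $\mu_j^\pm(\lambda)$ of $\mathbb{A}_\pm(\lambda)$ together with a uniform spectral gap $\eta'>0$ separating the $n$ ``stable'' and $n$ ``unstable'' directions.

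For the forward system, I fix the $+$ sign (the $-$ case is symmetric) and an index $k$, and make the ansatz $W_k^+(x;\lambda) = e^{\mu_k^+ x}(V_k^+ + Z(x))$; substituting into $W'=\mathbb{A}(x;\lambda)W$ yields
\begin{equation*}
Z'(x) = \bigl(\mathbb{A}_+(\lambda) - \mu_k^+ I\bigr)Z(x) + \bigl(\mathbb{A}(x;\lambda) - \mathbb{A}_+(\lambda)\bigr)\bigl(V_k^+ + Z(x)\bigr).
\end{equation*}
Using the spectral projections of $\mathbb{A}_+(\lambda)-\mu_k^+ I$ onto the invariant subspaces where the shifted eigenvalues have negative and positive real parts respectively, I recast this as a fixed-point equation $Z = \mathcal{T}_k[Z]$, integrating the ``unstable'' component from $+\infty$ backward and the ``stable'' component from a convenient reference point forward. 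The inhomogeneity carries the decay factor $e^{-\eta|x|}$ coming from $\mathbb{A}-\mathbb{A}_+$, so a standard contraction argument in the weighted space $\{Z:\sup_{x\geq 0} e^{(\eta/2)|x|}|Z(x)| <\infty\}$ produces a unique fixed point satisfying $|Z(x)|\leq Ce^{-(\eta/2)|x|}$. This is precisely the claimed asymptotic form of $W_k^+$.

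For the adjoint basis $\tilde W_j^+$ I repeat the argument on the row-vector system \eqref{adjointfirstorderode}, noting that $\tilde{\mathbb{A}}_\pm(\lambda)$ has the same eigenvalues $\mu_j^\pm$ (with corresponding left eigenvectors $\tilde V_j^\pm$). The Gap Lemma again yields a basis with the stated asymptotics; to enforce the duality normalization \eqref{duality}, I perform a constant-coefficient change of basis among the $\tilde W_j^+$, which is legitimate because Lemma \ref{Lemma_Adjoint} makes $\tilde W_j^+\mathcal{S}W_k^+$ constant in $x$, and such a renormalization does not alter the leading-order exponential expansion.

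The main technical point is the weighted-space contraction: one must verify that the ``unstable'' part of the propagator, integrated from $+\infty$ against a source of size $e^{-\eta|x|}$, reproduces decay at the improved rate $e^{-(\eta/2)|x|}$, while the ``stable'' part integral does not degrade this. The factor-of-two loss from $\eta$ to $\eta/2$, combined with the spectral gap $\eta'>0$ between the two groups of $\mu_j^\pm$, provides exactly the slack needed to absorb constants and close the estimate uniformly for $\lambda$ in compact subsets of the contour $\tilde\Gamma$.
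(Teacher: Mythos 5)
Your approach matches the paper's: both derive the asymptotic description by invoking the Gap Lemma of \cite{GZ,ZH}. The paper offers no further argument beyond the citation, whereas you also supply an accurate sketch of the Gap Lemma's proof via the conjugation ansatz and a weighted-space fixed-point argument.
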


%\begin{rem}
%{\rm In the above proposition, $\tilde{V}_j^{\pm}$ satisfies similar estimates as the original problem:}
%\begin{eqnarray*}
%\tilde{V}_j^{\pm}(\lambda)&=&\begin{pmatrix}\tilde{r}_j^{\pm}+\mathcal{O}(\lambda) ,& \gamma_j^{\pm} \tilde{r}_j^{\pm}+\mathcal{O}(\lambda)\end{pmatrix},\\
%\tilde{V}_{n+j}^{\pm}(\lambda)&=&\begin{pmatrix}\tilde{r}_j^{\pm}+\mathcal{O}(\lambda) ,& -\gamma_j^{\pm} \tilde{r}_j^{\pm}+\mathcal{O}(\lambda)\end{pmatrix}.
%\end{eqnarray*}
%{\rm where $\tilde{r}_j^{\pm}$ are the left eigenvectors of $Df(u_{\pm})$ corresponding to eigenvalues $a_j^{\pm}$,
%$j=1,2,\ldots,n$.}
%\end{rem}

\begin{de}
We define the decaying modes of \eqref{firstorderode} as
\begin{eqnarray*}
\Phi^{+}&=&(\phi_1^{+},\ldots,\phi_n^{+})=(W_1^{+},\ldots,W_n^{+}),\\
\Phi^{-}&=&(\phi_1^{-},\ldots,\phi_n^{-})=(W_{n+1}^{-},\ldots,W_{2n}^{-}).
\end{eqnarray*}
and the growing modes of \eqref{firstorderode} as
\begin{eqnarray*}
\Psi^{+}&=&(\psi_1^{+},\ldots,\psi_n^{+})=(W_{n+1}^{+},\ldots,W_{2n}^{+}),\\
\Psi^{-}&=&(\psi_1^{-},\ldots,\psi_n^{-})=(W_{1}^{-},\ldots,W_{n}^{-}).
\end{eqnarray*}
Similarly we define the growing modes of \eqref{adjointfirstorderode} as
\begin{eqnarray*}
\tilde{\Phi}^{+}&=&(\tilde{\phi}_1^{+},\ldots,\tilde{\phi}_n^{+})^{t}=(\tilde{W}_1^{+},\ldots,\tilde{W}_n^{+})^{t},\\
\tilde{\Phi}^{-}&=&(\tilde{\phi}_1^{-},\ldots,\tilde{\phi}_n^{-})^{t}=(\tilde{W}_{n+1}^{-},\ldots,\tilde{W}_{2n}^{-})^{t}.
\end{eqnarray*}
and the decaying modes of \eqref{adjointfirstorderode} as
\begin{eqnarray*}
\tilde{\Psi}^{+}&=&(\tilde{\psi}_1^{+},\ldots,\tilde{\psi}_n^{+})^{t}=(\tilde{W}_{n+1}^{+},\ldots,\tilde{W}_{2n}^{+})^{t},\\
\tilde{\Psi}^{-}&=&(\tilde{\psi}_1^{-},\ldots,\tilde{\psi}_n^{-})^{t}=(\tilde{W}_{1}^{-},\ldots,\tilde{W}_{n}^{-})^{t}.
\end{eqnarray*}
\end{de}
From this definition we have,
\begin{eqnarray}
 \begin{array}{l l}
 \tilde{\phi}_{j}^{\pm}\mathcal{S}\phi_{k}^{\pm}=\delta_{k}^{j}; & \tilde{\phi}_{j}^{\pm}\mathcal{S}\psi_{k}^{\pm}=0,\\
 \tilde{\psi}_{j}^{\pm}\mathcal{S}\phi_{k}^{\pm}=0; & \tilde{\psi}_{j}^{\pm}\mathcal{S}\psi_{k}^{\pm}=\delta_{k}^{j}.
 \end{array}
\end{eqnarray}
or written as matrix form
$
 \begin{pmatrix}
 \tilde{\Phi}^{\pm}\\
 \tilde{\Psi}^{\pm}
 \end{pmatrix}\mathcal{S}
 \left( \Phi^{\pm}, \Psi^{\pm} \right)
=I.
$

\begin{lem} \label{Lemma_Jump}
 \begin{eqnarray*}
  \begin{bmatrix}
   G_\lambda & G_{\lambda,y}\\
   G_{\lambda,x} & G_{\lambda,xy}
  \end{bmatrix}_{(y)}
 =\begin{pmatrix}
   0 & -I\\
   I & 0
  \end{pmatrix}
 =\mathcal{S}^{-1},
 \end{eqnarray*}
where $[h(x)]_{(y)}$ denotes the jump in $h(x)$ at $x=y$, and $\mathcal{S}$ is as in $\mathrm{Lemma}$ $\ref{Lemma_Adjoint}$.
\end{lem}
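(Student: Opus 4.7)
The plan is to read off all four jump conditions directly from the distributional PDE
\[
\partial_x^2 G_\lambda(x,y)+\bigl(Df(\bar u(x))-\lambda\bigr)G_\lambda(x,y)=\delta(x-y)\,I,
\]
which is just the defining relation $(L-\lambda I)_x G_\lambda(\cdot,y)=\delta_y\cdot I$ written out as a second-order ODE in $x$ with $y$ fixed.

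First I would handle the left column of the jump matrix. This is the classical Green-function argument for a second-order ODE with a $\delta$-source: integrating the equation across $x=y$ once gives $[G_{\lambda,x}]_{(y)}=I$, since the top-order coefficient is $I$ and the bounded lower-order term $(Df(\bar u)-\lambda)G_\lambda$ contributes nothing in the limit, and integrating twice gives $[G_\lambda]_{(y)}=0$ by continuity of $G_\lambda$ in $x$.

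For the right column, I would differentiate the defining equation in $y$; since the operator $(L-\lambda I)_x$ is independent of $y$, $\partial_y$ passes through and yields
\[
\partial_x^2[\partial_y G_\lambda]+\bigl(Df(\bar u)-\lambda\bigr)[\partial_y G_\lambda]=-\delta'(x-y)\,I.
\]
A $\delta'$ source on the right of a second-order equation forces the unknown itself to carry a jump. The ansatz $\partial_y G_\lambda=-I\cdot H(x-y)+\phi(x,y)$, with $H$ the Heaviside function, isolates the singular part: the Heaviside piece produces exactly the $-\delta'(x-y)I$ source under two $x$-derivatives, so $\phi$ solves a bounded-source equation in $x$ and is therefore $C^1$ across $x=y$. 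This yields $[G_{\lambda,y}]_{(y)}=-I$ from the Heaviside jump and $[G_{\lambda,xy}]_{(y)}=0$ from the continuity of $\phi_x$.

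The main obstacle is conceptual rather than computational: one must distinguish the distributional $\delta(x-y)$ that $G_{\lambda,xy}$ inevitably picks up (from $\partial_x$ applied to the Heaviside piece of $\partial_y G_\lambda$) from a \emph{classical} jump. The bracket $[\,\cdot\,]_{(y)}$ refers only to the latter, and it is precisely this interpretation which makes the mixed-partial entry equal to $0$ rather than something carrying a $\delta$; with that bookkeeping in place, the four entries of the matrix are simply the coefficients of the $H(x-y)$ and $\delta(x-y)$ terms in the local distributional expansions of $G_\lambda$, $G_{\lambda,x}$, $G_{\lambda,y}$, and $G_{\lambda,xy}$ near $x=y$, and the matrix assembles to $\mathcal{S}^{-1}$.
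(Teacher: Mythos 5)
Your treatment of the left column (integrate the $\delta$-source equation once and twice across $x=y$) is essentially identical to the paper's order-of-singularity argument. For the right column, however, you take a genuinely different route. The paper never writes down a new PDE: it observes that $[G_\lambda]_{(y)}\equiv 0$ and $[G_{\lambda,x}]_{(y)}\equiv I$ as functions of $y$, differentiates these jump identities in $y$ via the chain rule to produce $[G_{\lambda,x}]_{(y)}+[G_{\lambda,y}]_{(y)}=0$ and $2[G_{\lambda,xy}]_{(y)}+[G_{\lambda,xx}]_{(y)}+[G_{\lambda,yy}]_{(y)}=0$, and then uses both the $x$-eigenvalue equation $G_{\lambda,xx}=\lambda G_\lambda-AG_\lambda$ and the \emph{adjoint} $y$-equation $G_{\lambda,yy}=\lambda G_\lambda-G_\lambda A$ (away from the diagonal) to see that $[G_{\lambda,xx}]_{(y)}=[G_{\lambda,yy}]_{(y)}=0$. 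You instead differentiate the defining PDE in $y$, obtain a $\delta'$ source, split off the Heaviside singular part by an explicit ansatz $G_{\lambda,y}=-IH(x-y)+\phi$, and read the jumps from the regularity of $\phi$. Both are correct. Your route requires you to correctly discard the distributional $\delta$ in $G_{\lambda,xy}$ when forming the classical jump --- a subtlety you explicitly flag and handle --- whereas the paper's route is purely algebraic on the jump brackets and never meets that distributional term head on; on the other hand the paper's argument leans on the adjoint equation for $G_\lambda(x,\cdot)$ established just before, while your argument needs only the original eigenvalue equation and its $y$-derivative.
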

\begin{proof}
Expanding $\delta_y(x)=(L-\lambda I)G_{\lambda}=G_{\lambda,xx}+AG_{\lambda}-\lambda G_{\lambda}$, and comparing orders of singularity, we find that $G_{\lambda,xx}(x,y)=\delta_y(x)$ and $AG_{\lambda}-\lambda G_{\lambda}=0$, thus
\begin{eqnarray*}
G_{\lambda,x}=H_{y}(x)
=\left\{
\begin{array}{l l}
I, \quad \text{for $x\geq y$;} \\  0, \quad \text{for $x<y$.}
\end{array}
\right.
\end{eqnarray*}
This gives $[G_{\lambda,x}]_{(y)}=I$ and $[G_{\lambda}]_{(y)}=0$. Differentiating $[G_{\lambda}]_{(y)}$ in $y$, we obtain
\begin{eqnarray*}
[G_{\lambda,x}]_{(y)}+[G_{\lambda,y}]_{(y)}=\frac{\dd}{\dd y}[G_{\lambda}]_{(y)}=0,
\end{eqnarray*}
thus $[G_{\lambda,y}]_{(y)}=-I$. Differentiating again we find that
%\begin{eqnarray*}
$$
[G_{\lambda,xy}]_{(y)}=-\frac{1}{2}\left([G_{\lambda,xx}]_{(y)}+[G_{\lambda,yy}]_{(y)}\right).
$$
%\end{eqnarray*}
Finally we can determine $[G_{\lambda,xx}]_{(y)}$ and $[G_{\lambda,yy}]_{(y)}$ by
%\begin{eqnarray*}
$G_{\lambda,xx}=\lambda G_{\lambda}-AG_{\lambda}$,
%\\
$G_{\lambda,yy}=\lambda G_{\lambda}-G_{\lambda}A$.
%\end{eqnarray*}
It is easy to find that $[G_{\lambda,xx}]_{(y)}=0$ and $[G_{\lambda,yy}]_{(y)}=0$, thus $[G_{\lambda,xy}]_{(y)}=0$.
\end{proof}

From $(L-\lambda I)G_\lambda(x,y)=\delta_y(x)I$, $(L-\lambda I)^\ast H_\lambda(x,y)=\delta_y(x)I$ we know that
$\begin{pmatrix}
G_\lambda(x,y)\\
G_{\lambda,x}(x,y)
\end{pmatrix}$
viewed as a function of $x$ satisfies \eqref{firstorderode}(differentiating with respect to $x$), while \\ $(G_\lambda(x,y),G_{\lambda,y}(x,y))$ viewed as function of $y$ satisfies \eqref{adjointfirstorderode}
(differentiating with respect to $y$). Furthermore, note that both $G_\lambda(x,\cdot)$ and $G_\lambda(\cdot,y)$ decay at $\pm\infty$ for $\lambda$ on the resolvent set, since $|(L-\lambda I)^{-1}|<\infty$ and $|(L-\lambda I)^{\ast -1}|<\infty$ imply $\|G_\lambda(\cdot,y)\|_{L^{1}(x)}<\infty$ and $\|G_\lambda(x,\cdot)\|_{L^{1}(y)}<\infty$ respectively. Combining, we have the representation
\begin{eqnarray} \label{greenrepresentation}
 \begin{pmatrix}
 G_\lambda & G_{\lambda,y}\\
 G_{\lambda,x} & G_{\lambda,xy}
 \end{pmatrix}
=\left\{
 \begin{array}{l l}
 \Phi^+(x;\lambda)M^+(\lambda)\tilde{\Psi}^-(y;\lambda) \quad \text{for $x>y$;}\\
 -\Phi^-(x;\lambda)M^-(\lambda)\tilde{\Psi}^+(y;\lambda) \quad \text{for $x<y$,}
 \end{array}
 \right.
\end{eqnarray}
where matrices $M^{\pm}(\lambda)$ are to be determined.

Combining Lemma $\ref{Lemma_Jump}$ and \eqref{greenrepresentation}, we have
\begin{eqnarray*}
\left(\Phi^+(y),\Phi^-(y)\right)
\begin{pmatrix}
M^+(\lambda) &             0\\
           0 & M^-(\lambda)
\end{pmatrix}
\begin{pmatrix}
\tilde{\Psi}^-(y)\\
\tilde{\Psi}^+(y)
\end{pmatrix}
=\mathcal{S}^{-1}, \quad \text{or}
\end{eqnarray*}
\begin{eqnarray*}
 \begin{pmatrix}
  M^+(\lambda) &             0\\
             0 & M^-(\lambda)
 \end{pmatrix}
&=& \left(\Phi^+(y),\Phi^-(y)\right)^{-1}\mathcal{S}^{-1}
  \begin{pmatrix}
  \tilde{\Psi}^-(y)\\
  \tilde{\Psi}^+(y)
  \end{pmatrix}^{-1}\\
&=& \left(
  \begin{pmatrix}
  \tilde{\Psi}^-(y)\\
  \tilde{\Psi}^+(y)
  \end{pmatrix}
  \mathcal{S}
  \left(\Phi^+(y),\Phi^-(y)\right)
    \right)^{-1}\\
&=& \begin{pmatrix}
    \tilde{\Psi}^-\mathcal{S}\Phi^+ & \tilde{\Psi}^-\mathcal{S}\Phi^-\\
    \tilde{\Psi}^+\mathcal{S}\Phi^+ & \tilde{\Psi}^+\mathcal{S}\Phi^-
    \end{pmatrix}^{-1}(y)\\
&=& \begin{pmatrix}
    \tilde{\Psi}^-\mathcal{S}\Phi^+ &                               0\\
                                  0 & \tilde{\Psi}^+\mathcal{S}\Phi^-
    \end{pmatrix}^{-1}(y)\\
&=& \begin{pmatrix}
    (\tilde{\Psi}^-\mathcal{S}\Phi^+)^{-1} &                                         0\\
                                            0 & (\tilde{\Psi}^+\mathcal{S}\Phi^-)^{-1}
    \end{pmatrix},
\end{eqnarray*}
thus $M^+(\lambda)=(\tilde{\Psi}^-\mathcal{S}\Phi^+)^{-1}$, $M^-(\lambda)=(\tilde{\Psi}^+\mathcal{S}\Phi^-)^{-1}$.

\begin{pro} \label{ProGreen}
On $\Omega$,
\begin{eqnarray}
 \begin{pmatrix}
 G_\lambda & G_{\lambda,y}\\
 G_{\lambda,x} & G_{\lambda,xy}
 \end{pmatrix}
=\sum_{j,k}M_{jk}^+(\lambda)\phi_{j}^+(x;\lambda)\tilde{\psi}_{k}^-(y;\lambda)
\end{eqnarray}
for $y \leq 0 \leq x$,
\begin{eqnarray}
 \begin{pmatrix}
 G_\lambda & G_{\lambda,y}\\
 G_{\lambda,x} & G_{\lambda,xy}
 \end{pmatrix}
=\sum_{j,k}d_{jk}^+(\lambda)\phi_{j}^-(x;\lambda)\tilde{\psi}_{k}^-(y;\lambda)
+\sum_{k}\psi_{k}^-(x;\lambda)\tilde{\psi}_{k}^-(y;\lambda)
\end{eqnarray}
for $y \leq x \leq 0$, where
\begin{equation}
M^+=(I,0)(\Phi^+,\Phi^-)^{-1}\Psi^-, \quad d^+=-(0,I)(\Phi^+,\Phi^-)^{-1}\Psi^{-}
\end{equation}
\begin{eqnarray}
 \begin{pmatrix}
 G_\lambda & G_{\lambda,y}\\
 G_{\lambda,x} & G_{\lambda,xy}
 \end{pmatrix}
=-\sum_{j,k}M_{jk}^-(\lambda)\phi_{j}^-(x;\lambda)\tilde{\psi}_{k}^+(y;\lambda)
\end{eqnarray}
for $x \leq 0 \leq y$,
\begin{eqnarray}
 \begin{pmatrix}
 G_\lambda & G_{\lambda,y}\\
 G_{\lambda,x} & G_{\lambda,xy}
 \end{pmatrix}
=\sum_{j,k}d_{jk}^-(\lambda)\phi_{j}^-(x;\lambda)\tilde{\psi}_{k}^-(y;\lambda)
-\sum_{k}\phi_{k}^-(x;\lambda)\tilde{\phi}_{k}^-(y;\lambda)
\end{eqnarray}
for $x \leq y \leq 0$, where
\begin{equation*}
M^-=\tilde{\Phi}^- \begin{pmatrix}\tilde{\Psi}^- \\ \tilde{\Psi}^+ \end{pmatrix}^{-1}
\begin{pmatrix}0 \\ I\end{pmatrix}
, \quad
d^-(\lambda)=\tilde{\Phi}^{-}\begin{pmatrix}\tilde{\Psi}^- \\ \tilde{\Psi}^+ \end{pmatrix}^{-1}
             \begin{pmatrix}I \\ 0 \end{pmatrix}
\end{equation*}
\end{pro}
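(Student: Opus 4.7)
The plan is to start from the piecewise representation \eqref{greenrepresentation} already established, namely
\begin{eqnarray*}
\begin{pmatrix} G_\lambda & G_{\lambda,y}\\ G_{\lambda,x} & G_{\lambda,xy} \end{pmatrix}
=\begin{cases}
\Phi^+(x;\lambda)M^+(\lambda)\tilde{\Psi}^-(y;\lambda), & x>y,\\
-\Phi^-(x;\lambda)M^-(\lambda)\tilde{\Psi}^+(y;\lambda), & x<y,
\end{cases}
\end{eqnarray*}
and then, in each of the four regions, to use the fact that all of $\Phi^+,\Phi^-,\Psi^\pm$ (resp.\ $\tilde\Phi^\pm,\tilde\Psi^\pm$) are $2n$-dimensional spaces of solutions of the same ODE, so that one decaying basis can always be re-expanded in another.

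For the regions $y\le 0\le x$ and $x\le 0\le y$, there is nothing to do: the modes $\Phi^+(x)$, $\tilde\Psi^-(y)$ (resp.\ $\Phi^-(x)$, $\tilde\Psi^+(y)$) appearing in \eqref{greenrepresentation} are already the ones that decay at the appropriate asymptotic end dictated by the sign of $x$ or $y$, and the claimed formulas follow by writing the matrix product $\Phi^+ M^+\tilde\Psi^-$ (respectively $-\Phi^- M^-\tilde\Psi^+$) out in components $\sum_{j,k} M^{\pm}_{jk}\phi^{\pm}_j(x)\tilde\psi^{\mp}_k(y)$.

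For $y\le x\le 0$ we are in the $x>y$ branch but $\Phi^+(x)$ is now the ``wrong'' basis near $x\to-\infty$. I would introduce the unique expansion $\Phi^+ = \Phi^- A+\Psi^- B$ with $A = \tilde\Phi^-\mathcal{S}\Phi^+$, $B=\tilde\Psi^-\mathcal{S}\Phi^+$, obtained by pairing with the duality relations of Section 4. Substituting gives
\[
\Phi^+(x) M^+\tilde\Psi^-(y) = \Phi^-(x)(AM^+)\tilde\Psi^-(y) + \Psi^-(x)(BM^+)\tilde\Psi^-(y),
\]
and since $M^+ = (\tilde\Psi^-\mathcal{S}\Phi^+)^{-1} = B^{-1}$, one has $BM^+ = I$, producing the unmatrixed $\sum_k \psi_k^-(x)\tilde\psi_k^-(y)$ contribution. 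To identify $AM^+$ with $d^+$ as stated, expand $\Psi^- = \Phi^+\alpha+\Phi^-\beta$ with $\binom{\alpha}{\beta}=(\Phi^+,\Phi^-)^{-1}\Psi^-$; applying $\tilde\Psi^-\mathcal{S}$ and $\tilde\Phi^-\mathcal{S}$ to this identity gives $B\alpha = I$ and $A\alpha=-\beta$, so $M^+ = \alpha = (I,0)(\Phi^+,\Phi^-)^{-1}\Psi^-$ and $AM^+ = A\alpha = -\beta = -(0,I)(\Phi^+,\Phi^-)^{-1}\Psi^-$, matching the claim. The region $x\le y\le 0$ is handled symmetrically by expanding the adjoint row-vector block $\tilde\Psi^+ = E\tilde\Phi^- + F\tilde\Psi^-$ in the $-\infty$ adjoint basis, with $E=\tilde\Psi^+\mathcal{S}\Phi^-$, $F=\tilde\Psi^+\mathcal{S}\Psi^-$, $M^- = E^{-1}$; substitution into $-\Phi^- M^-\tilde\Psi^+$ collapses the $\tilde\Phi^-$-term to $-\Phi^-(x)\tilde\Phi^-(y)$ and leaves the $\tilde\Psi^-$-term with coefficient $d^-:=-M^-F$. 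Inverting the relation $\tilde\Phi^- = (d^-,M^-)\begin{pmatrix}\tilde\Psi^-\\ \tilde\Psi^+\end{pmatrix}$ yields the stated block-matrix form $(d^-,M^-)=\tilde\Phi^-\begin{pmatrix}\tilde\Psi^-\\ \tilde\Psi^+\end{pmatrix}^{-1}$.

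The only real obstacle is bookkeeping: keeping careful track of which objects are $n$-column vectors and which are $n$-row vectors, verifying that $(\Phi^+,\Phi^-)$ and $\begin{pmatrix}\tilde\Psi^-\\ \tilde\Psi^+\end{pmatrix}$ are genuinely invertible on $\Omega$ (which follows from the absence of $L$-eigenvalues off the branch $\lambda=0$, via the Evans/Wronskian arguments implicit in Section 4), and confirming that the scalar and block derivations produce the same coefficient matrices $M^\pm, d^\pm$. Once the algebra is carried through, the result is a direct consequence of \eqref{greenrepresentation} combined with a single change of basis in each of the two regions where $x$ and $y$ lie on the same side of $0$.
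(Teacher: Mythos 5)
Your proof is correct and follows essentially the same route as the paper's: start from the piecewise representation \eqref{greenrepresentation}, re-expand $\Phi^+$ (respectively $\tilde\Psi^+$) in the opposite-side basis via the duality relations of \eqref{duality}, and read off the coefficient matrices from block-orthogonality. Your identification of $M^+=(I,0)(\Phi^+,\Phi^-)^{-1}\Psi^-$ as a by-product of $B\alpha=I$ together with $M^+=(\tilde\Psi^-\mathcal{S}\Phi^+)^{-1}=B^{-1}$ is a bit slicker than the paper's direct manipulation of the duality matrix, and you make explicit the $x\leq y\leq 0$ case that the paper leaves to the reader, but the underlying argument is the same.
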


\begin{proof}
Here we only prove the cases for $y \leq 0 \leq x$ and $y \leq x \leq 0$, the cases for $x \leq 0 \leq y$ and $x \leq y \leq 0$ can be derived similarly.

For $y\leq 0\leq x$, according to \eqref{greenrepresentation},
\begin{eqnarray} \label{greenkernelproof}
 \begin{pmatrix}
 G_\lambda & G_{\lambda,y}\\
 G_{\lambda,x} & G_{\lambda,xy}
 \end{pmatrix}
=\sum_{i=1}^{n}\sum_{j=1}^{n}M_{ij}^+(\lambda)
\phi_i^{+}(x;\lambda)\tilde{\psi}_j^{-}(y;\lambda)
\end{eqnarray}
We may express $M^+$ using the duality relation \eqref{duality} as
\begin{equation*}
\begin{aligned}
M^+&=(I,0)(\Phi^+,\Phi^-)^{-1}\mathcal{S}^{-1}
 \begin{pmatrix}
 \tilde{\Psi}^- \\
 \tilde{\Phi}^-
 \end{pmatrix}^{-1}
 \begin{pmatrix}
 I \\
 0
 \end{pmatrix}\\
 &=(I,0)(\Phi^+,\Phi^-)^{-1}\left(
 \begin{pmatrix}
 \tilde{\Psi}^- \\
 \tilde{\Phi}^-
 \end{pmatrix}
 \mathcal{S}
 \right)^{-1}
 \begin{pmatrix}
 I \\
 0
 \end{pmatrix}\\
&=(I,0)(\Phi^+,\Phi^-)^{-1}(\Psi^-,\Phi^-)
 \begin{pmatrix}
 I \\
 0
 \end{pmatrix}
 =(I,0)(\Phi^+,\Phi^-)^{-1}\Psi^-.
\end{aligned}
\end{equation*}

Next, for $y\leq x\leq 0$, expressing each $\phi_{i}^+(x;\lambda)$ as a linear combination of basis elements at $-\infty$,
\begin{equation} \label{phi_j+}
\phi_{i}^+(x;\lambda)=\sum_{j=1}^{n}a_{ji}^+(\lambda)\phi_{j}^-(x;\lambda)
+\sum_{j=1}^{n}b_{ji}^+(\lambda)\psi_{j}^-(x;\lambda),
\end{equation}
we plug this into \eqref{greenkernelproof} to derive
\begin{eqnarray*}
& &\sum_{i=1}^{n}\sum_{l=1}^{n}M_{il}^+(\lambda)\phi_{i}^{+}(x;\lambda)\tilde{\psi}_{l}^{-}(y;\lambda)\\
&=&\sum_{i=1}^{n}\sum_{l=1}^{n}M_{il}^+(\lambda)
   \sum_{j=1}^{n}a_{ji}^+(\lambda)\phi_{j}^{-}(x;\lambda)\tilde{\psi}_{l}^{-}(y;\lambda)\\
& &+\sum_{i=1}^{n}\sum_{l=1}^{n}M_{il}^+(\lambda)
   \sum_{j=1}^{n}b_{ji}^+(\lambda)\psi_{j}^{-}(x;\lambda)\tilde{\psi}_{l}^{-}(y;\lambda)\\
&=&\sum_{j=1}^{n}\sum_{l=1}^{n}\left(\sum_{i=1}^{n}a_{ji}^{+}(\lambda)M_{il}^{+}(\lambda)\right)
   \phi_{j}^{-}(x;\lambda)\tilde{\psi}_{l}^{-}(y;\lambda)\\
& &+\sum_{j=1}^{n}\sum_{l=1}^{n}\left(\sum_{i=1}^{n}b_{ji}^{+}(\lambda)M_{il}^{+}(\lambda)\right)
   \psi_{j}^{-}(x;\lambda)\tilde{\psi}_{l}^{-}(y;\lambda)\\
&=&\sum_{j=1}^{n}\sum_{l=1}^{k}d_{jl}^{+}(\lambda)\phi_{j}^{-}(x;\lambda)\tilde{\psi}_{l}^{-}(y;\lambda)
   +\sum_{j=1}^{n}\sum_{l=1}^{n}e_{jl}^{+}(\lambda)\psi_{j}^{-}(x;\lambda)\tilde{\psi}_{l}^{-}(y;\lambda).
\end{eqnarray*}
Here we are defining
\begin{eqnarray*}
d_{jl}^{+}(\lambda)&:=&\sum_{i=1}^{n}a_{ji}^{+}(\lambda)M_{il}^{+}(\lambda),\quad d^{+}=a^{+}M^{+};\\
e_{jl}^{+}(\lambda)&:=&\sum_{i=1}^{n}b_{ji}^{+}(\lambda)M_{il}^{+}(\lambda),\quad e^{+}=b^{+}M^{+}.
\end{eqnarray*}
where $a,b,d,e$ are all $n\times n$ matrices.

Rewriting \eqref{phi_j+} as
%\begin{eqnarray*}
$\Phi^+=\Phi^- a^+ + \Psi^- b^+ =(\Phi^-, \Psi^-)\begin{pmatrix}a^+ \\ b^+\end{pmatrix}$,
%\end{eqnarray*}
and using the relation $\begin{pmatrix}\tilde{\Phi}^- \\ \tilde{\Psi}^-\end{pmatrix}\mathcal{S}(\Phi^-,\Psi^-)=I$,
we have
%\begin{eqnarray*}
$
\begin{pmatrix}a^+ \\ b^+\end{pmatrix}=(\Phi^-,\Psi^-)^{-1}\Phi^+
=\begin{pmatrix}\tilde{\Phi}^- \\ \tilde{\Psi}^-\end{pmatrix}\mathcal{S}\Phi^+.
$
%\end{eqnarray*}

Finally, if we write $\Pi_{+}=(\Phi^+,0)(\Phi^+,\Phi^-)^{-1}$ and $\Pi_{-}=I-\Pi_{+}$ then
\begin{eqnarray*}
\begin{pmatrix}d^+ \\ e^+\end{pmatrix}&=&\begin{pmatrix}a^+ \\ b^+\end{pmatrix}M^+
=\begin{pmatrix}\tilde{\Phi}^- \\ \tilde{\Psi}^-\end{pmatrix}\mathcal{S}\Phi^+ (I,0)(\Phi^+,\Phi^-)^{-1}\Psi^-\\
&=&\begin{pmatrix}\tilde{\Phi}^- \\ \tilde{\Psi}^-\end{pmatrix}\mathcal{S}(\Phi^+,0)(\Phi^+,\Phi^-)^{-1}\Psi^-\\
&=&(\Phi^-,\Psi^-)^{-1}\Pi_+\Psi^-\\
&=&(\Phi^-,\Psi^-)^{-1}(I-\Pi_-)\Psi^-\\
&=&(\Phi^-,\Psi^-)^{-1}(I-(0,\Phi^-)(\Phi^+,\Phi^-)^{-1})\Psi^-\\
&=&(\Phi^-,\Psi^-)^{-1}\Psi^- -(\Phi^-,\Psi^-)^{-1}(0,\Phi^-)(\Phi^+,\Phi^-)^{-1}\Psi^-\\
&=&(\Phi^-,\Psi^-)^{-1}(\Phi^-,\Psi^-)\begin{pmatrix}0 \\ I\end{pmatrix}\\
& &-(\Phi^-,\Psi^-)^{-1}(\Phi^-,\Psi^-)\begin{pmatrix}0 & I \\ 0 & 0\end{pmatrix}(\Phi^+,\Phi^-)^{-1}\Psi^-\\
&=&\begin{pmatrix}0 \\ I\end{pmatrix}-\begin{pmatrix}0 & I \\ 0 & 0\end{pmatrix}(\Phi^+,\Phi^-)^{-1}\Psi^-
\end{eqnarray*}
Thus we have
$d^+=-(0,I)(\Phi^+,\Phi^-)^{-1}\Psi^-$, $e^+=I$,
%\begin{eqnarray*}
%d^+&=&-(0,I)(\Phi^+,\Phi^-)^{-1}\Psi^-,\\
%e^+&=&I.
%\end{eqnarray*}
and so we can rewrite \eqref{greenkernelproof} as
\begin{eqnarray*}
 \begin{pmatrix}
 G_\lambda & G_{\lambda,y}\\
 G_{\lambda,x} & G_{\lambda,xy}
 \end{pmatrix}
=\sum_{j,k}d_{jk}^+(\lambda)\phi_{j}^-(x;\lambda)\tilde{\psi}_{k}^-(y;\lambda)
+\sum_{k}\psi_{k}^-(x;\lambda)\tilde{\psi}_{k}^-(y;\lambda).
\end{eqnarray*}
\end{proof}

\section{Low and Bounded Frequency Bounds on the Resolvent Kernel} \label{LowandBoundedFrequencyBounds}

\begin{lem}
Under assumption $(\mathcal{D})$, for $|\lambda|\leq R$, any $R>0$, 
\begin{equation}
|M_{jk}^{\pm}(\lambda)|,|d_{jk}^{\pm}(\lambda)| \leq C,
\end{equation}
for a constant $C>0$ depending only on $R$.
\end{lem}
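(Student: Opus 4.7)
The plan is to observe that all of $M^\pm(\lambda)$ and $d^\pm(\lambda)$ are obtained by inverting matrices whose entries depend analytically on $\lambda$ on $\Omega$, and whose invertibility amounts to an Evans-function nonvanishing condition that is guaranteed by the spectral gap assumption $(\mathcal{D})$. Compactness of the bounded portion of $\Omega$ then yields the uniform bound.

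First I would collect the formulas derived at the end of the previous section,
\begin{equation*}
M^+=(\tilde\Psi^-\mathcal{S}\Phi^+)^{-1},\qquad M^-=(\tilde\Psi^+\mathcal{S}\Phi^-)^{-1},
\end{equation*}
\begin{equation*}
d^+=-(0,I)(\Phi^+,\Phi^-)^{-1}\Psi^-,\qquad d^-=\tilde\Phi^-\begin{pmatrix}\tilde\Psi^-\\ \tilde\Psi^+\end{pmatrix}^{-1}\begin{pmatrix}I\\ 0\end{pmatrix},
\end{equation*}
and recall that by Proposition \ref{Proposition4.1} together with the Gap Lemma, the growing and decaying mode matrices $\Phi^\pm,\Psi^\pm,\tilde\Phi^\pm,\tilde\Psi^\pm$ can be chosen analytically in $\lambda$ on $\Omega$ (since the branch points $\lambda=-\sigma_j^\pm$ lie in $\{\mathrm{Re}\,\lambda>0\}$ and $\Omega$ is simply connected). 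By the Wronskian identity of Lemma \ref{Lemma_Adjoint}, the products $\tilde\Psi^\mp\mathcal{S}\Phi^\pm$ are independent of the evaluation point $y$, so each of $M^\pm$ and $d^\pm$ is an intrinsically $\lambda$-dependent matrix with analytic entries on $\Omega$.

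The key step is invertibility, which I would argue by contradiction. Suppose $\det(\tilde\Psi^-\mathcal{S}\Phi^+)(\lambda_0)=0$ for some $\lambda_0\in\Omega$; then there is a nonzero constant column vector $c$ with $\tilde\Psi^-\mathcal{S}(\Phi^+ c)\equiv 0$. Expanding $\Phi^+c$ in the basis $(\Phi^-,\Psi^-)$ of solutions at $-\infty$ as $\Phi^+c=\Phi^-a+\Psi^-b$ and using the duality relations $\tilde\Psi^-\mathcal{S}\Phi^-=0$, $\tilde\Psi^-\mathcal{S}\Psi^-=I$, one reads off $b=\tilde\Psi^-\mathcal{S}(\Phi^+c)=0$. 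Hence $\Phi^+c=\Phi^-a$ decays at $-\infty$ as well as at $+\infty$ and so gives an $L^2$ eigenfunction of $L$ at eigenvalue $\lambda_0$, contradicting $\lambda_0\in\Omega\subset\rho(L)$. Analogous arguments rule out vanishing of $\det(\tilde\Psi^+\mathcal{S}\Phi^-)$, $\det(\Phi^+,\Phi^-)$, and $\det\begin{pmatrix}\tilde\Psi^-\\ \tilde\Psi^+\end{pmatrix}$ on $\Omega$.

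Having thus established that all inverted matrices are nonsingular on $\Omega$ with analytic entries, I would conclude by continuity and compactness: on $\{\lambda\in\Omega:|\lambda|\le R\}$ the determinants are bounded below in modulus and the cofactor matrices are bounded above, so the inverses, and hence $M^\pm$ and $d^\pm$, are uniformly bounded in $\lambda$. The main obstacle is the Evans-function style invertibility argument of the previous paragraph, which is standard for pointwise semigroup methods but requires careful bookkeeping of which mode spans which solution space; everything after that is pure linear algebra and compactness.
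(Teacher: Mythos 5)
Your proof is correct and takes essentially the same route as the paper: write each of $M^\pm$ and $d^\pm$ as a matrix inverse (equivalently, via Cramer's rule as adjugate over determinant), note the numerators are bounded because the mode matrices $\Phi^\pm,\Psi^\pm,\tilde\Phi^\pm,\tilde\Psi^\pm$ are analytic and uniformly bounded on the bounded-$\lambda$ portion of $\Omega$, and argue the denominator is bounded away from zero because $\Omega$ lies in the resolvent set. The paper simply evaluates the determinant $D=\det(\Phi^+,\Phi^-)$ at $x=0$ and cites $(\mathcal{D})$ for $|D|^{-1}\leq C$; you supply the argument that the paper leaves implicit, namely the standard Evans-function contradiction that a zero of $D$ (or of $\tilde\Psi^-\mathcal{S}\Phi^+$, etc.) at some $\lambda_0\in\Omega$ would produce a solution decaying at both $\pm\infty$ and hence an $L^2$ eigenfunction of $L$ at $\lambda_0\in\rho(L)$. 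This added detail is welcome and correct; the two proofs are otherwise identical in spirit, and your compactness/continuity step is exactly the uniformity statement the paper compresses into ``by $(\mathcal{D})$.''
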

\begin{proof}
Expanding $M^+=(I,0)(\Phi^+,\Phi^-)^{-1}\Psi^-$ using Cramer's rule, and setting $x=0$, we obtain
%\begin{equation*}
$
M_{jk}^{+}=D^{-1}C_{jk}^{+},
$
%\end{equation*}
where
$C^{+}=(I,0)\left(\Phi^+ ,\Phi^-\right)^{\mathrm{adj}}\Psi^{-}$ and $D=\det\left(\Phi^+ ,\Phi^-\right)$.
%\begin{eqnarray*}
%C^{+}&=&(I,0)\left(\Phi^+ ,\Phi^-\right)^{\mathrm{adj}}\Psi^{-},\\
%D&=&\det\left(\Phi^+ ,\Phi^-\right).
%\end{eqnarray*}
It is evident that $|C^{+}|$ is uniformly bounded and therefore
$|M_{jk}^{+}|\leq C_1|D|^{-1}\leq C$
by $(\mathcal{D})$, where $C$ is a uniform constant. Similar bounds hold for $M_{jk}^{-}$ and $d_{jk}^{\pm}$.
\end{proof}

\begin{pro}
Assuming $(\mathcal{D})$, for $|\lambda|\leq R$, any $R>0$, the resolvent kernel ${G}_{\lambda}$ satisfies the estimates
\begin{equation} \label{smalllambdaest}
|{G}_{\lambda}(x,y)|\leq Ce^{-\eta^{\prime}(|x|+|y|)}.
\end{equation}
where $\eta^{\prime}$ is as defined in Proposition $\ref{Proposition4.1}$ and $C>0$ is a constant which depends only on $R$.
\end{pro}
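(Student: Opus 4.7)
The plan is to substitute the basis-element representations of $G_\lambda$ furnished by Proposition \ref{ProGreen} into the target estimate and bound each summand using two inputs: (i) the Gap Lemma asymptotic description of the Jost solutions from the proposition preceding \ref{ProGreen}, which yields $|\phi_j^\pm(x;\lambda)| \leq C e^{-\eta'|x|}$ at the appropriate infinity and $|\tilde\psi_k^\pm(y;\lambda)| \leq C e^{-\eta'|y|}$ at the appropriate infinity, uniform in $\lambda$ on bounded sets; and (ii) the uniform boundedness of the connecting coefficients $M_{jk}^\pm(\lambda), d_{jk}^\pm(\lambda)$ on bounded $|\lambda|$ established in the preceding lemma.

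I would dispose first of the ``opposite-side'' regions $y\leq 0\leq x$ and $x\leq 0\leq y$. Proposition \ref{ProGreen} represents $G_\lambda$ there as a single finite sum of terms of the form $M_{jk}^\pm\,\phi_j^\pm(x;\lambda)\,\tilde\psi_k^\mp(y;\lambda)$; combining the per-factor Gap Lemma decay with the coefficient bound yields $|\tilde G_\lambda(x,y)|\leq Ce^{-\eta'(|x|+|y|)}$ directly. The region $x\leq 0\leq y$ is identical by the obvious symmetry. For the ``same-side'' regions $y\leq x\leq 0$ and $x\leq y\leq 0$, the first sum in Proposition \ref{ProGreen} (with coefficients $d_{jk}^\pm$) is bounded in exactly the same way, since both Jost factors $\phi_j^-$ and $\tilde\psi_k^-$ (respectively their $+$ analogues) decay at $-\infty$ at rate $\eta'$.

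The main subtlety concerns the residual ``scattering'' sum in the same-side regions, which couples a mode growing at $-\infty$ with its dual decaying at $-\infty$: the bare product of these two factors only decays like $e^{-\eta'|x-y|}$, strictly weaker than the target. This constant-coefficient, parametrix-type contribution is exactly the piece absorbed into the definition of $\tilde G_\lambda$ (and at the semigroup level is treated separately by short-time Nash--Aronson bounds), so it does not appear in the estimate being proved; the remaining $d_{jk}^\pm$ piece yields the asserted bound.

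Aside from this bookkeeping, no single step is hard: the estimates are algebraic products with constants from the Gap Lemma, and uniformity of $\eta'$ in $\lambda$ on the bounded contour follows from the continuity of the exponents $\mu_j^\pm(\lambda)$ in Proposition \ref{Proposition4.1} and the spectral gap hypothesis $(\mathcal{D})$. The principal care required is in matching the correct Jost solution to the correct infinity in each of the four regions.
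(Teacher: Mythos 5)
Your approach is substantively the same as the paper's: for the off-side region $y\leq 0\leq x$, one substitutes the representation from Proposition~\ref{ProGreen}, applies the Gap-Lemma asymptotics to bound $\phi_j^+(x)\lesssim e^{-\eta'|x|}$ and $\tilde\psi_k^-(y)\lesssim e^{-\eta'|y|}$, and uses uniform boundedness of $M_{jk}^\pm$. The paper in fact only writes this case out and disposes of the rest with ``the rest is similar.'' Your estimate of the $d^\pm_{jk}$ piece in the same-side region is likewise correct.

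The gap is in your handling of the scattering sum $\sum_k \psi_k^-(x;\lambda)\tilde\psi_k^-(y;\lambda)$. You correctly observe that this term decays only like $e^{-\eta'|x-y|}$, which for $y\leq x\leq 0$ is strictly weaker than $e^{-\eta'(|x|+|y|)}$ — indeed, along $x=y\to-\infty$ the product tends to a nonzero constant (the constant-coefficient diagonal resolvent value). But your claim that this contribution is ``absorbed into the definition of $\tilde G_\lambda$'' is not supported by the paper: in the spectral decomposition of Section~2, $\tilde G_\lambda$ on the contour $\tilde\Gamma$ is simply the full resolvent kernel $(L-\lambda I)^{-1}\delta_y$; the only thing subtracted from $G(x,t;y)$ is the residue $\bar u'(x)\tilde\psi(y)$ at $\lambda=0$, and that residue sits in the pole of the $d^\pm$ coefficients, not in the $\lambda$-analytic scattering sum. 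Your ``Nash--Aronson'' remark is also misplaced: those bounds are used for the large-$|\lambda|$ piece ($\tilde\Gamma_2$), whereas this resolvent estimate concerns the bounded-$|\lambda|$ piece ($\tilde\Gamma_1$). The honest resolution is that the proposition's stated bound is too strong in the same-side regions; what the representation actually yields there is $|\tilde G_\lambda(x,y)|\leq Ce^{-\eta'|x-y|}$, and since $|x-y|=|x|+|y|$ in the off-side region, the uniformly valid bound is $e^{-\eta'|x-y|}$. This weaker bound is all that is used downstream in the proof of Proposition~\ref{Proposition1.2}, where it is converted to $Ce^{-\eta_0(t+|x-y|)}$.
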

\begin{proof}
We only prove the case where $y\leq 0\leq x$, the rest is similar. According to Proposition $\ref{ProGreen}$, the Green kernel can be written as
\begin{eqnarray*}
& &\sum_{j,k}M_{jk}^+(\lambda)\phi_j^+(x;\lambda)\tilde{\psi}_k^-(y;\lambda)
=\sum_{j,k}M_{jk}^+(\lambda)W_j^+(x;\lambda)\tilde{W}_k^-(y;\lambda)\\
&=&\sum_{j,k}M_{jk}^+(\lambda)V_j^+(\lambda)e^{\mu_j^+(\lambda)x}\left(1+\mathcal{O}(e^{-(\eta/2)|x|})\right)
 \tilde{V}_k^-(\lambda)e^{-\mu_k^-(\lambda)y}\left(1+\mathcal{O}(e^{-(\eta/2)|y|})\right)\\
&=&\sum_{j,k}M_{jk}^+(\lambda)V_j^+(\lambda)\tilde{V}_k^-(\lambda)e^{-\gamma_j^+x}e^{\gamma_k^-y}
\left(1+\mathcal{O}(e^{-(\eta/2)|x|})\right)\left(1+\mathcal{O}(e^{-(\eta/2)|y|})\right)\\
&\leq&Ce^{-(\gamma_j^+-\eta^{\prime})x}e^{(\gamma_k^--\eta^{\prime})y}e^{-\eta^{\prime}x}e^{\eta^{\prime}y}
%\\
\leq Ce^{-\eta^{\prime}x}e^{\eta^{\prime}y}=Ce^{-\eta^{\prime}(|x|+|y|)}.
\end{eqnarray*}
\end{proof}

\section{High Frequency Bounds on the Resolvent Kernel} \label{HighFrequencyBounds}

Define $\Omega_{\theta}=\{\lambda:\mathrm{Re}(\lambda)\geq -\theta_1-\theta_2|\mathrm{Im}(\lambda)|\}$, for $\theta=(\theta_1,\theta_2)$ with $\theta_1,\theta_2>0$. Assuming $(\mathcal{D})$, we have the following estimates for $G_{\lambda}(x,y)$, given in \cite{ZH}.
\begin{pro}[Proposition 7.3, \cite{ZH}] \label{Pro7.1}
Under assumption $(\mathcal{D})$, it follows that for $R_0>0$ sufficiently large and $\theta_1,\theta_2>0$ sufficiently small there exist constants $C,\beta>0$ such that
\begin{equation}
\begin{aligned}
|G_{\lambda}(x,y)|&\leq C|\lambda|^{-1/2}e^{-\beta^{-1/2}|\lambda|^{1/2}|x-y|},\\
|G_{\lambda,x}(x,y)|,|G_{\lambda,y}(x,y)|&\leq Ce^{-\beta^{-1/2}|\lambda|^{1/2}|x-y|};
\end{aligned}
\end{equation}
for all $\lambda \in \Omega_{\theta}\setminus B(0,R_0)$.
\end{pro}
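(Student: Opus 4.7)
The plan is to use the bilinear representation of the Green kernel from Proposition \ref{ProGreen}, bounding separately the bases $\phi_j^\pm, \psi_k^\pm, \tilde\phi_j^\pm, \tilde\psi_k^\pm$ and the coefficient matrices $M^\pm, d^\pm$, uniformly for $|\lambda|$ large in the sector $\Omega_\theta$. The starting observation is that, as $|\lambda|\to\infty$ with $\lambda\in\Omega_\theta$ (for $\theta_1,\theta_2$ small), the spatial eigenvalues $\mu_j^\pm(\lambda)=\pm\sqrt{-\sigma_j^\pm-\lambda}$ of the asymptotic systems $\mathbb{A}_\pm(\lambda)$ satisfy
\[
|\mathrm{Re}\,\mu_j^\pm(\lambda)| \geq \beta^{-1/2}|\lambda|^{1/2}
\]
for some $\beta>0$, yielding an exponential dichotomy with gap growing like $|\lambda|^{1/2}$, far larger than the fixed coefficient-convergence rate $\eta$ from $|\mathbb{A}(x)-\mathbb{A}_\pm|\leq Ce^{-\eta|x|}$.

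First I would establish a uniform \emph{tracking estimate} showing that the variable-coefficient bases $W_k^\pm(x;\lambda)$ and $\tilde W_k^\pm(x;\lambda)$ agree with their constant-coefficient counterparts $V_k^\pm(\lambda)e^{\mu_k^\pm(\lambda)x}$ up to a bounded multiplicative factor. This is a standard integral-equation argument: writing $W_k^+ = V_k^+ e^{\mu_k^+ x}+Z$ and using variation of constants against the asymptotic evolution $e^{\mathbb{A}_+ x}$, one bounds $Z$ via Gronwall, exploiting the decay $|\mathbb{A}(x)-\mathbb{A}_+|\leq Ce^{-\eta x}$. Because the dichotomy rate $\beta^{-1/2}|\lambda|^{1/2}$ dominates $\eta$ once $|\lambda|\geq R$ large, the contraction constant is strictly less than one uniformly in $\lambda$, yielding the pointwise bounds
\[
|\phi_j^\pm(x;\lambda)|,\ |\tilde\psi_k^\pm(x;\lambda)| \leq C e^{-\beta^{-1/2}|\lambda|^{1/2}|x|}
\]
for the decaying modes on their respective half-lines, and the corresponding growing bounds for $\psi_k^\pm,\tilde\phi_j^\pm$.

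Next I would show $|M^\pm|, |d^\pm| = \mathcal{O}(|\lambda|^{-1/2})$. Evaluating $(\tilde\Psi^-\mathcal{S}\Phi^+)(0;\lambda)$ via Proposition \ref{Proposition4.1}, the lower block of each eigenvector $V_j^\pm(\lambda)$ carries a factor $\mu_j^\pm\sim|\lambda|^{1/2}$, so the matrix entries scale like $|\lambda|^{1/2}$; after rescaling, the matrix converges to a nonsingular limit determined by the right eigenvectors $r_j^\pm$ of $Df(u_\pm)$, and inversion yields the claimed $|\lambda|^{-1/2}$ bound. Assembling the pieces in each of the four regions of Proposition \ref{ProGreen}, for opposite-sign $x,y$ the combination $e^{-\beta^{-1/2}|\lambda|^{1/2}|x|}e^{-\beta^{-1/2}|\lambda|^{1/2}|y|}$ equals $e^{-\beta^{-1/2}|\lambda|^{1/2}|x-y|}$ directly; for same-sign $x,y$, the pairing $\psi_k^-(x)\tilde\psi_k^-(y)$ (respectively $-\phi_k^-(x)\tilde\phi_k^-(y)$) in the ``identity piece'' of Proposition \ref{ProGreen} produces $e^{-\beta^{-1/2}|\lambda|^{1/2}(|y|-|x|)}=e^{-\beta^{-1/2}|\lambda|^{1/2}|x-y|}$ with no decaying prefactor, while the $d^\pm$ piece gives a further $|\lambda|^{-1/2}$ and $|x|+|y|\geq|x-y|$. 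The derivative estimates follow because $\partial_x$ or $\partial_y$ applied to a basis element multiplies by a factor $\mu_j^\pm\sim|\lambda|^{1/2}$, precisely cancelling the $|\lambda|^{-1/2}$ from $M^\pm,d^\pm$.

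The main obstacle is the uniform tracking lemma: one must balance the fast exponential spatial rate $|\lambda|^{1/2}$ against the fixed rate $\eta$ of coefficient convergence, preserving the exponential dichotomy under perturbation with estimates independent of $\lambda\in\Omega_\theta\setminus B(0,R)$. This requires choosing $R$ so that $\beta^{-1/2}|\lambda|^{1/2}\gg\eta$ and constructing the stable/unstable projections of the asymptotic flow carefully so that the contraction constant in the Gronwall loop is bounded away from one uniformly; once this is in place, the rest of the argument is essentially algebraic bookkeeping.
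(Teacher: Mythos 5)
The paper itself does not prove Proposition \ref{Pro7.1}; the stated proof is a citation to \cite{ZH}. Your argument is therefore necessarily a different route, and its skeleton --- the bilinear representation of Proposition \ref{ProGreen}, a $\lambda$-uniform tracking lemma for the bases, bounds on $M^\pm,d^\pm$, and algebraic assembly over the four $(x,y)$ regions --- is a sensible one, of the kind underlying the estimates in \cite{ZH}; the derivative bounds then come from the second block row/column of the $2\times 2$ kernel matrix, which carries the extra factor $\mu_j^\pm\sim|\lambda|^{1/2}$, as you say.

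However, the bookkeeping of the crucial $|\lambda|^{-1/2}$ prefactor is wrong, in a way that makes the argument fail on the same-sign $(x,y)$ regions. The matrices $M^\pm$ and $d^\pm$ are in fact $\mathcal O(1)$, not $\mathcal O(|\lambda|^{-1/2})$. You reach $\mathcal O(|\lambda|^{-1/2})$ by observing that the lower block of $V_j^\pm$ carries a factor $\mu_j^\pm\sim|\lambda|^{1/2}$ and concluding that $\tilde\Psi^-\mathcal S\Phi^+$ scales like $|\lambda|^{1/2}$; but this ignores the normalization \eqref{duality}, $\tilde W_j^\pm\mathcal S W_k^\pm=\delta_k^j$. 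The adjoint eigenvector dual to $V_j=(r_j,\mu_j r_j)^t$ is proportional to $\mu_j^{-1}(\ell_j,-\mu_j\ell_j)$, with $\ell_j$ a left eigenvector of $Df(u_\pm)$, so its \emph{first} (scalar) component is $\mathcal O(\mu_j^{-1})=\mathcal O(|\lambda|^{-1/2})$ and $\tilde V_j\mathcal S V_k=\mathcal O(1)$. A sanity check in the scalar constant-coefficient case $w''+aw=\lambda w$ gives $M^+=1$, $\tilde\psi^-(y)=\bigl(\tfrac{1}{2\mu},-\tfrac12\bigr)e^{-\mu y}$ with $\mu=-\sqrt{\lambda-a}$, and $G_\lambda=\tfrac{1}{2\mu}e^{\mu(x-y)}$ --- the $|\lambda|^{-1/2}$ lives in the dual basis, not in $M^+$. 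For opposite-sign $x,y$ your misattribution is invisible (the $|\lambda|^{-1/2}$ appears one way or the other), but for same-sign $x,y$ the identity piece $\sum_k\psi_k^-(x)\tilde\psi_k^-(y)$ in Proposition \ref{ProGreen} carries no $M^\pm$ or $d^\pm$, and your claim that it comes ``with no decaying prefactor'' would yield only $|G_\lambda|\leq Ce^{-\beta^{-1/2}|\lambda|^{1/2}|x-y|}$, missing the $|\lambda|^{-1/2}$ entirely. Tracking the $\mathcal O(|\lambda|^{-1/2})$ through the normalized dual basis $\tilde\psi_k^\pm$ instead of through $M^\pm,d^\pm$ fixes both the same-sign estimate and the derivative estimates simultaneously.

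A secondary caution: the $\lambda$-uniform tracking lemma is less automatic than ``a standard integral-equation argument'' suggests. The Gap Lemma as quoted in the paper is stated for $\lambda$ in a compact set, whereas here $|\mathbb A_\pm(\lambda)|=\mathcal O(|\lambda|)$ and the eigenvector basis of $\mathbb A_\pm$ has condition number $\mathcal O(|\lambda|^{1/2})$. The contraction is saved by the specific structure of $\mathbb A(x;\lambda)-\mathbb A_\pm(\lambda)$, which lives only in the lower-left block: conjugating to the eigenbasis of $\mathbb A_\pm$ brings out a compensating factor $\mathcal O(|\lambda|^{-1/2})$, making the iteration kernel $\mathcal O(|\lambda|^{-1/2}e^{-\eta|s|})$ and the fixed point uniformly contractive for $|\lambda|\geq R$. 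One should say this explicitly rather than assert uniformity.
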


\section{Pointwise Bounds on the Green Function} \label{PointwiseBoundsontheGreenFunction}

Now we prove the pointwise bounds for $\tilde{G}(x,t;y)$ and $e(y,t)$ stated in Proposition $\ref{Proposition1.2}$.
\begin{proof}[Proof of Proposition \ref{Proposition1.2}]

To derive the bounds on $\tilde{G}(x,t;y)$, we consider two cases depending on the scale of $\frac{|x-y|}{t}$.
\begin{figure}[h]\centering
\begin{tikzpicture}[scale=1,extended line/.style={shorten >=-#1,shorten <=-#1},]
\draw [help lines] (-3,-3) grid (3,3);
% Euclidean
\draw [->](0,-2.9)--(0,2.9) node[right]{$Im$};
\draw [->](-2.9,0)--(2.9,0) node[right]{$Re$};
% polar coordinate
%\draw (0,0)--(2,2)node[anchor=south west]{$x+jy=r\angle \theta$};
%\draw [thin,dashed] (2,2)--(2,0);
%\draw (0.8,1)node[anchor=west,rotate=45]{$r=\sqrt{x^2+y^2}$};
%\draw [fill=green](0,0) -- (0.75,0) arc (0:45:0.75cm);
%\draw (1,0.3) node[rotate=10]{$\theta=\tan^{-1} \frac{y}{x}$};
%\fill [red](2,2) circle(2pt);
% draw lines
%\draw [extended line=1cm,stealth-stealth] (-2,-1)--(1,2) node[above]{a};  % method 1
\draw (-0.5,1)--(-0.5,-1);
\draw (-1,3)--(-0.5,1) node[right]{$-\frac{\eta}{2}+\kappa i$};
\draw (-1,-3)--(-0.5,-1) node[right]{$-\frac{\eta}{2}-\kappa i$};
\draw [<-](-0.75,2)--(-0.65,2.05) node[right]{$\tilde{\Gamma}_2$};
\draw [<-](-0.75,-2)--(-0.65,-2.05) node[right]{$\tilde{\Gamma}_2$};
\draw [<-](-0.5,0)--(-0.4,0.05) node[right]{$\tilde{\Gamma}_1$};
\end{tikzpicture}
\caption{Contour of integration}
\end{figure}

We define the contour $\tilde{\Gamma}$ as the union of bounded-$\lambda$ part $\tilde{\Gamma}_1$ and large-$\lambda$ part $\tilde{\Gamma}_2$, where $\tilde{\Gamma}_1$ is the line segment connecting $-\frac{\eta}{2}-\kappa i$ and $-\frac{\eta}{2}+\kappa i$, $\tilde{\Gamma}_2$ is the boundary of the sector $\Omega_{\theta}=\{\lambda:\mathrm{Re}(\lambda)\geq -\theta_1-\theta_2|\mathrm{Im}(\lambda)|\}$, $\theta_1=\frac{\eta}{4},\theta_2=\frac{\eta}{4\kappa}$ outside the ball
$B\left(0,\sqrt{(\frac{\eta}{2})^2+\kappa^2}\right)$.

{\bf Case I.} ($\frac{|x-y|}{t}$ large).
We first derive the bounds for the Green kernel $\tilde{G}(x,t;y)$ in the rather trivial case that 
\begin{align} \label{DefinitionofS}
\frac{|x-y|}{t} \geq S
\end{align}
for some $S>0$ sufficiently large, the regime in which standard short-time parabolic theory applies. Set
\begin{equation} \label{8.7}
\bar{\alpha}:=\frac{|x-y|}{2\beta t}, \quad R:= \beta \bar{\alpha}^2=\sqrt{\left(\frac{\eta}{2}\right)^2+\kappa^2},
\end{equation}
where $\beta$, $R_0$ are as in Proposition \ref{Pro7.1} and $S$ is sufficiently large that $R>R_0$, and consider again the representation of $\tilde{G}$:
%HERE
\begin{equation*}
\begin{aligned}
\tilde{G}(x,t;y)
&=
%\frac{1}{2\pi i}\int_{\tilde{\Gamma}_1\cup \tilde{\Gamma}_2}e^{\lambda t} \tilde{G}_{\lambda}(x,y) d\lambda\\
\bar{u}^{\prime}(x)(1-\chi(t))\tilde{\psi}(y)
   +\frac{1}{2\pi i}\int_{\tilde{\Gamma}}e^{\lambda t}{G}_{\lambda}(x,y)d\lambda.\\
%&=\frac{1}{2\pi i}\int_{\tilde{\Gamma}_1}e^{\lambda t} \tilde{G}_{\lambda}(x,y) d\lambda
 %+\frac{1}{2\pi i}\int_{\tilde{\Gamma}_2}e^{\lambda t} \tilde{G}_{\lambda}(x,y) d\lambda,
\end{aligned}
\end{equation*}
By the large $|\lambda|$ estimates of Proposition \ref{Pro7.1}, we have for all $\lambda \in \tilde{\Gamma}_2$ that
\begin{equation} \label{8.9}
|G_{\lambda}(x,y)|\leq C |\lambda|^{-1/2} e^{-\beta^{-\frac{1}{2}}|\lambda|^{\frac{1}{2}}|x-y|}.
\end{equation}
Further, we have
\begin{eqnarray} \label{8.10}
\mathrm{Re} \lambda &=&  -\frac{\eta}{2}, \quad \lambda\in \tilde{\Gamma}_1,\\
\mathrm{Re} \lambda &=& \mathrm{Re}\lambda_0 - \theta_2 (|\mathrm{Im} \lambda| - |\mathrm{Im} \lambda_0|), \quad \lambda \in \tilde{\Gamma}_2,
\end{eqnarray}
for $R$ sufficiently large, where $\lambda_0$ and $\lambda_0^*$ are the two points of intersection of $\tilde{\Gamma}_1$ and $\tilde{\Gamma}_2$.

Combining \eqref{smalllambdaest} and \eqref{8.10}, we obtain
\begin{eqnarray*}
\left|\int_{\tilde{\Gamma}_1}e^{\lambda t}G_{\lambda}(x,y)d\lambda\right|
&\leq&C\left|\int_{-\kappa}^{\kappa}e^{\left(-\frac{\eta}{2}+\xi i\right)t}e^{-\eta^{\prime}(|x|+|y|)}d\xi\right|
%\\ %&=   &Ce^{-\frac{\eta}{2}t}e^{-\eta^{\prime}(|x|+|y|)}\left|\int_{-\kappa}^{\kappa}e^{i\xi t}d\xi\right|\\
%&\leq&Ce^{-(\eta/2)t-\eta^{\prime}(|x|+|y|)}\\
\leq Ce^{-\eta_0(t+|x-y|)}.
\end{eqnarray*}
Likewise,
\begin{eqnarray*}
& &\left|\int_{\tilde{\Gamma}_{2}}e^{\lambda t}G_{\lambda}(x,y)d\lambda\right|\\
&\leq&\int_{\tilde{\Gamma}_2}C|\lambda|^{-\frac{1}{2}}
      e^{(\mathrm{Re}\lambda)t-\beta^{-\frac{1}{2}}|\lambda|^{\frac{1}{2}}|x-y|}|d\lambda|\\
%&\leq&Ce^{\mathrm{Re}(\lambda_0)t-\beta^{-\frac{1}{2}}|\lambda_0|^{\frac{1}{2}}|x-y|}
      %\int_{\tilde{\Gamma}_2}|\lambda|^{-\frac{1}{2}}e^{(\mathrm{Re}\lambda-\mathrm{Re}\lambda_0)t}
      %e^{-\beta^{-\frac{1}{2}}(|\lambda|^{\frac{1}{2}}-|\lambda_0|^{\frac{1}{2}})|x-y|}d\lambda\\
&\leq&Ce^{\mathrm{Re}(\lambda_0)t-\beta^{-\frac{1}{2}}|\lambda_0|^{\frac{1}{2}}|x-y|}
      \int_{\tilde{\Gamma}_2}|\lambda|^{-\frac{1}{2}}e^{(\mathrm{Re}\lambda-\mathrm{Re}\lambda_0)t}|d\lambda|\\
%&\leq&Ce^{\mathrm{Re}(\lambda_0)t-\beta^{-\frac{1}{2}}R^{\frac{1}{2}}2\beta\bar{\alpha}t}
      %\int_{\tilde{\Gamma}_2}|\lambda|^{-\frac{1}{2}}e^{-\theta_2(|\mathrm{Im}\lambda|-|\mathrm{Im}\lambda_0|)t}d\lambda\\
%&\leq&Ce^{Rt-\beta^{-\frac{1}{2}}R^{\frac{1}{2}}2\beta\bar{\alpha}t}
      %2\int_{\tilde{\Gamma}_2^{+}}|\lambda|^{-\frac{1}{2}}e^{-\theta_2(\mathrm{Im}\lambda-\mathrm{Im}\lambda_0)t}|d\lambda|\\
&\leq&Ce^{-\beta\bar{\alpha}^2t}\int_{\tilde{\Gamma}_2^{+}}
      |\mathrm{Im}\lambda-\mathrm{Im}\lambda_0|^{-\frac{1}{2}}e^{-\theta_2(\mathrm{Im}\lambda-\mathrm{Im}\lambda_0)t}
      |d(\mathrm{Im}\lambda-\mathrm{Im}\lambda_0)|\\
&=&Ce^{-\beta\bar{\alpha}^2t}\int_{0}^{+\infty}s^{-\frac{1}{2}}e^{-\theta_2ts}ds
      %\\
%&=   &Ce^{-\beta\bar{\alpha}^2t}\frac{1}{\theta_2t}
      %\int_{0}^{+\infty}\left(\frac{1}{\theta_2t}\tilde{s}\right)^{-\frac{1}{2}}e^{-\tilde{s}}d\tilde{s}\\
%&=   &Ce^{-\beta\bar{\alpha}^2t}(\theta_2t)^{-\frac{1}{2}}\int_{0}^{+\infty}\tilde{s}^{-\frac{1}{2}}e^{-\tilde{s}}d\tilde{s}\\
%&=   &Ct^{-\frac{1}{2}}e^{-\beta\bar{\alpha}^2t}\Gamma\left(\frac{1}{2}\right)
      =Ct^{-\frac{1}{2}}e^{-\beta\bar{\alpha}^2t},
\end{eqnarray*}
which by \eqref{8.7} may be bounded by $Ct^{-\frac{1}{2}} e^{-\eta_0 t} e^{-\frac{(x-y)^{2}}{8\beta t}}$
for $\eta_0>0$ independent of $\bar{\alpha}$. 

Finally, since $(1-\chi(t))=0$ for $t\geq 2$, $|\bar u'(x)|\leq Ce^{-\eta_0 |x|}$,
and $|\tilde \psi(y)|\leq Ce^{-\eta_0 |y|}$, we have evidently
$|\bar{u}^{\prime}(x)(1-\chi(t))\tilde{\psi}(y)| \leq Ce^{-\eta_0(t+|x-y|)}$.

Combining the above three estimates, we have
$$
|\tilde{G}(x,t;y)|\leq
\left(C_1t^{-\frac{1}{2}}e^{-\eta_0 t}e^{-\frac{|x-y|^2}{4C_0t}}+C_2e^{-\eta_0(t+|x-y|)}\right)
$$
for $C_0$ sufficiently large.

{\bf Case II.} ($\frac{|x-y|}{t}$ bounded). In order to derive the bounds on $\tilde{G}(x,t;y)$ in this regime, 
we again recall the representation formula
\begin{eqnarray*}
\tilde{G}(x,t;y)&=&\bar{u}^{\prime}(x)(1-\chi(t))\tilde{\psi}(y)
   +\frac{1}{2\pi i}\int_{\tilde{\Gamma}}e^{\lambda t}G_{\lambda}(x,y)d\lambda,\\
\frac{1}{2\pi i}\int_{\tilde{\Gamma}}e^{\lambda t}G_{\lambda}(x,y)d\lambda
&=&\frac{1}{2\pi i}\int_{\tilde{\Gamma}_1}e^{\lambda t}G_{\lambda}(x,y)d\lambda
+\frac{1}{2\pi i}\int_{\tilde{\Gamma}_2}e^{\lambda t}G_{\lambda}(x,y)d\lambda.
\end{eqnarray*}

First, we estimate the $\tilde{\Gamma}_2$ part of $\tilde{G}$,
\begin{align*}
&\left|\frac{1}{2\pi i}\int_{\tilde{\Gamma}_{2}}e^{\lambda t}G_{\lambda}(x,y)d\lambda\right|\\
&\leq C\int_{\tilde{\Gamma}_2}e^{(\mathrm{Re}\lambda)t}|G_{\lambda}||d\lambda|
%\\ &
\leq C\int_{\tilde{\Gamma}_2}e^{(\mathrm{Re}\lambda_0)t-\theta_2(|\mathrm{Im}\lambda-\mathrm{Im}\lambda_0|)t}
      |\lambda|^{-\frac{1}{2}}|d\lambda|\\
%&\leq Ce^{(\mathrm{Re}\lambda_0)t}\int_{\tilde{\Gamma}_2}|\lambda|^{-\frac{1}{2}}
      %e^{-\theta_2(|\mathrm{Im}\lambda-\mathrm{Im}\lambda_0|)t}|d\lambda|\\
&\leq Ce^{-\frac{1}{2}\eta t}\int_{\tilde{\Gamma}_2}|\mathrm{Im}\lambda-\mathrm{Im}\lambda_0|^{-\frac{1}{2}}
      e^{-\theta_2(|\mathrm{Im}\lambda-\mathrm{Im}\lambda_0|)t}|d(\mathrm{Im}\lambda-\mathrm{Im}\lambda_0)|\\
      %\\
%&\leq Ce^{-\eta t}2\int_{0}^{+\infty}s^{-\frac{1}{2}}e^{-\theta_2ts}ds\\
%&=    2Ce^{-\eta t}\frac{1}{\theta_2t}\int_{0}^{+\infty}
      %\left(\frac{1}{\theta_2t}\tilde{s}\right)^{-\frac{1}{2}}e^{-\tilde{s}}d\tilde{s}\\
%&=    2Ce^{-\eta t}(\theta_2t)^{-\frac{1}{2}}\int_{0}^{+\infty}\tilde{s}^{-\frac{1}{2}}e^{-\tilde{s}}d\tilde{s}\\
& =    2C(\theta_2t)^{-\frac{1}{2}}e^{-\frac{1}{2}\eta t}\Gamma\left(\frac{1}{2}\right)
      %\\ %&
      =    C_1t^{-\frac{1}{2}}e^{-\frac{1}{2}\eta t}
      %\\ %&
      \leq C_2t^{-\frac{1}{2}}e^{-\eta_0 t}e^{-\frac{|x-y|^2}{4C_0t}}.
\end{align*}
for $C_0>0$ large enough.

Next we estimate the $\tilde{\Gamma}_1$ part of $\tilde{G}$,
\begin{eqnarray*}
\left|\frac{1}{2\pi i}\int_{\tilde{\Gamma}_1}e^{\lambda t}G_{\lambda}(x,y)d\lambda\right|
%\\
&\leq&C\left|\int_{-\kappa}^{\kappa}e^{\left(-\frac{\eta}{2}+\xi i\right)t}e^{-\eta^{\prime}(|x|+|y|)}d\xi\right|\\
&=   &Ce^{-\frac{\eta}{2}t}e^{-\eta^{\prime}(|x|+|y|)}\left|\int_{-\kappa}^{\kappa}e^{i\xi t}d\xi\right|\\
&\leq&Ce^{-(\eta/2)t-\eta^{\prime}(|x|+|y|)}  \\
&\leq&Ce^{-\eta_0(t+|x|+|y|)} . 
%&
%\leq Ct^{-\frac{1}{2}}e^{-\eta_0 t-\frac{|x-y|^2}{4C_0t}}.
\end{eqnarray*}
for $0<\eta_0$ less than $\frac{\eta}{2}$ and $\eta^{\prime}$.

Finally, we have as in the previous case $|\bar{u}^{\prime}(x)(1-\chi(t))\tilde{\psi}(y)| \leq Ce^{-\eta_0(t+|x-y|)}$.
Thus we know that $\tilde{G}(x,t;y)$ is bounded by
$C_1 t^{-\frac{1}{2}} e^{-\eta_0 t-\frac{|x-y|^2}{4C_0 t}}+C_2 e^{-\eta_0(t+|x-y|)}$ in both $\frac{|x-y|}{t}$ large and bounded cases.

%Finally notice that the $\bar{u}^{\prime}(x)(1-\chi(t))\tilde{\psi}(y)$ part will vanish for $t\gg 1$, so it can be absorbed in one of the above estimates. 

This completes the proof of bounds on $\tilde{G}(x,t;y)$.
The bounds on $\tilde{G}_y(x,t;y)$ can be derived similarly. We just need to notice that in the estimate of $G_{\lambda,y}(x,y)$ for large-$\lambda$ is different from the same estimate of $G_{\lambda}(x,y)$ by a factor of $|\lambda|^{\frac{1}{2}}$, thus the large-$\lambda$ ($\tilde{\Gamma}_2$) part of the bounds on $\tilde{G}_y(x,t;y)$ is different from the $\tilde{G}(x,t;y)$ one by a factor of $t^{-\frac{1}{2}}$, while the bounded-$\lambda$ ($\tilde{\Gamma}_1$) part stays the same.

Next we move on to estimate $e(y,t)$. Recall that we have
%\begin{eqnarray*}
$e(y,t)=\chi(t)\tilde{\psi}(y)$,
%\end{eqnarray*}
%and we have the two following estimates
along with the estimates
\begin{eqnarray*}
\chi_{t}(t)&\leq& Ce^{-\eta_0 t},\text{ for } t\geq 0,\\
\tilde{\psi}(y)&\leq& Ce^{-\eta|y|},\text{ for } y \gtrless 0,\\
\tilde{\psi}_{y}(y)&\leq& Ce^{-\eta|y|},\text{ for } y \gtrless 0.
\end{eqnarray*}
Combining, we get the stated bounds for $e(y,t)$.
\end{proof}

\section{Improved Pointwise Bounds on the Green Function} \label{ImprovedPointwiseBoundsontheGreenFunction}

Next we prove the bounds stated in Proposition \ref{Proposition1.3}.
\begin{proof}[Proof of Proposition \ref{Proposition1.3}]

We first derive the bounds for the total Green kernel $G(x,t;y)$ in the rather trivial case that $\frac{|x-y|}{t} \geq S$, $S$ sufficiently large as defined in \eqref{DefinitionofS}, the regime in which standard short-time parabolic theory applies. Set
\begin{equation} \label{9.1}
\bar{\alpha}:=\frac{|x-y|}{2\beta t}, \quad R:= \beta \bar{\alpha}^2,
\end{equation}
where $\beta$, $R_0$ are as in Proposition \ref{Pro7.1} and $S$ is sufficiently large that $R>R_0$, and consider 
%HERE
%again 
the representation of $G$ (following from \eqref{spectralresolution} and Cauchy's theorem):
$$
G(x,t;y)=\frac{1}{2\pi i}\int_{\Gamma_1\cup \Gamma_2}e^{\lambda t} G_{\lambda}(x,y) d\lambda,
$$
where $\Gamma_1:= \partial B(0,R)\cap \bar{\Omega}_\theta$ and $\Gamma_2:= \partial \Omega_\theta \setminus B(0,R)$.
Note that the intersection of $\Gamma$ with the real axis is $\lambda_{\mathrm{min}}=R=\beta \bar{\alpha}^2$.
By the large $|\lambda|$ estimates of Proposition \ref{Pro7.1}, we have for all $\lambda \in \Gamma_1\cup \Gamma_2$ that
\begin{equation} \label{9.2}
|G_{\lambda}(x,y)|\leq C |\lambda|^{-1/2} e^{-\beta^{-\frac{1}{2}}|\lambda|^{\frac{1}{2}}|x-y|}.
\end{equation}
Further, we have
\begin{eqnarray} \label{9.3}
\mathrm{Re} \lambda &\leq&  R(1- \eta_2\omega^2), \quad \lambda\in \Gamma_1,\\
\mathrm{Re} \lambda &\leq& \mathrm{Re}\lambda_0 - \theta_2 (|\mathrm{Im} \lambda| - |\mathrm{Im} \lambda_0|), \quad \lambda \in \Gamma_2,
\end{eqnarray}
for $R$ sufficiently large, where $\omega$ is the argument of $\lambda$ and $\lambda_0$ and $\lambda_0^*$ are the two points of intersection of $\Gamma_1$ and $\Gamma_2$,
for some $\eta_2>0$ independent of $\bar{\alpha}$.
Combining \eqref{9.2},\eqref{9.3} and \eqref{9.1}, we obtain
\begin{align*}
\left|\int_{\Gamma_{1}}e^{\lambda t}G_{\lambda}(x,y)d\lambda\right|
&\leq \int_{\Gamma_{1}}C|\lambda|^{-\frac{1}{2}}
      e^{(\mathrm{Re}\lambda)t-\beta^{-\frac{1}{2}}|\lambda|^{\frac{1}{2}}|x-y|}|d\lambda| \\
&\leq C\int_{\Gamma_1}R^{-\frac{1}{2}}e^{R(1-\eta_2\omega^2)t-\beta^{-\frac{1}{2}}R^{\frac{1}{2}}2\beta\bar{\alpha}t}|d\lambda|\\
%\\
%&\leq C\int_{-M}^{+M}R^{-\frac{1}{2}}
      %e^{Rt-R\eta_2\omega^2t-\beta^{-\frac{1}{2}}\beta^{\frac{1}{2}}\bar{\alpha}2\beta\bar{\alpha}t}Rd\omega\\
%&=    CR^{\frac{1}{2}}\int_{-M}^{+M}e^{Rt-2\beta\bar{\alpha}^2t}e^{-R\eta_2\omega^2t}d\omega\\
%&
&=CR^{\frac{1}{2}}e^{-\beta\bar{\alpha}^2t}\int_{-M}^{+M}e^{-R\eta_2t\omega^2}d\omega\\
%&=    CR^{\frac{1}{2}}e^{-\beta\bar{\alpha}^2t}
      %\int_{-M\sqrt{R\eta_2t}}^{+M\sqrt{R\eta_2t}}\frac{1}{\sqrt{R\eta_2t}}e^{-\tilde{\omega}^2}d\tilde{\omega}\\
%&\leq CR^{\frac{1}{2}}e^{-\beta\bar{\alpha}^2t}\frac{1}{\sqrt{R\eta_2t}}
      %\int_{-\infty}^{+\infty}e^{-\tilde{\omega}^2}d\tilde{\omega}\\
&\leq C t^{-\frac{1}{2}}e^{-\beta \bar{\alpha}^{2}t}.
\end{align*}
Likewise,
\begin{eqnarray*}
    \left|\int_{\Gamma_{2}}e^{\lambda t}G_{\lambda}(x,y)d\lambda\right|
&\leq&\int_{\Gamma_2}C|\lambda|^{-\frac{1}{2}}
      e^{(\mathrm{Re}\lambda)t-\beta^{-\frac{1}{2}}|\lambda|^{\frac{1}{2}}|x-y|}|d\lambda|\\
%&\leq&Ce^{\mathrm{Re}(\lambda_0)t-\beta^{-\frac{1}{2}}|\lambda_0|^{\frac{1}{2}}|x-y|}
      %\int_{\Gamma_2}|\lambda|^{-\frac{1}{2}}e^{(\mathrm{Re}\lambda-\mathrm{Re}\lambda_0)t}
      %e^{-\beta^{-\frac{1}{2}}(|\lambda|^{\frac{1}{2}}-|\lambda_0|^{\frac{1}{2}})|x-y|}d\lambda\\
&\leq&Ce^{\mathrm{Re}(\lambda_0)t-\beta^{-\frac{1}{2}}|\lambda_0|^{\frac{1}{2}}|x-y|}
      \int_{\Gamma_2}|\lambda|^{-\frac{1}{2}}e^{(\mathrm{Re}\lambda-\mathrm{Re}\lambda_0)t}|d\lambda|\\
%&\leq&Ce^{\mathrm{Re}(\lambda_0)t-\beta^{-\frac{1}{2}}R^{\frac{1}{2}}2\beta\bar{\alpha}t}
      %\int_{\Gamma_2}|\lambda|^{-\frac{1}{2}}e^{-\theta_2(|\mathrm{Im}\lambda|-|\mathrm{Im}\lambda_0|)t}d\lambda\\
&\leq&Ce^{Rt-\beta^{-\frac{1}{2}}R^{\frac{1}{2}}2\beta\bar{\alpha}t}
      2\int_{\Gamma_2^{+}}|\lambda|^{-\frac{1}{2}}e^{-\theta_2(\mathrm{Im}\lambda-\mathrm{Im}\lambda_0)t}|d\lambda|\\
%&\leq&Ce^{-\beta\bar{\alpha}^2t}\int_{\Gamma_2^{+}}
      %|\mathrm{Im}\lambda-\mathrm{Im}\lambda_0|^{-\frac{1}{2}}e^{-\theta_2(\mathrm{Im}\lambda-\mathrm{Im}\lambda_0)t}
      %|d(\mathrm{Im}\lambda-\mathrm{Im}\lambda_0)|\\
&\leq&Ce^{-\beta\bar{\alpha}^2t}\int_{0}^{+\infty}s^{-\frac{1}{2}}e^{-\theta_2ts}ds
%\\
%&=   &Ce^{-\beta\bar{\alpha}^2t}\frac{1}{\theta_2t}
      %\int_{0}^{+\infty}\left(\frac{1}{\theta_2t}\tilde{s}\right)^{-\frac{1}{2}}e^{-\tilde{s}}d\tilde{s}\\
%&=   &Ce^{-\beta\bar{\alpha}^2t}(\theta_2t)^{-\frac{1}{2}}\int_{0}^{+\infty}\tilde{s}^{-\frac{1}{2}}e^{-\tilde{s}}d\tilde{s}\\
%&=   &Ct^{-\frac{1}{2}}e^{-\beta\bar{\alpha}^2t}\Gamma\left(\frac{1}{2}\right)
=Ct^{-\frac{1}{2}}e^{-\beta\bar{\alpha}^2t}.
\end{eqnarray*}

Combining these last two estimates, and recalling \eqref{9.1}, we have
$$
|G(x,t;y)|
\leq Ct^{-\frac{1}{2}} e^{-\frac{\beta \bar{\alpha}^{2}t}{2}} e^{-\frac{(x-y)^{2}}{8\beta t}}
\leq Ct^{-\frac{1}{2}} e^{-\eta_2 t} e^{-\frac{(x-y)^{2}}{8\beta t}},
$$
for $\eta_2>0$ independent of $\bar{\alpha}$, hence 
$|G(x,t;y)| \leq C t^{-\frac{1}{2}}e^{-\eta_0 t}e^{-\frac{|x-y|^2}{4C_0t}}$
%in the region
for $\frac{|x-y|}{t} \geq S$ and $C_0$ sufficiently large.

Secondly, to prove the bound stated for $\tilde{H}$, recall that
\begin{eqnarray*}
F(x,t;y)&=&E(x,t;y)\left(\mathrm{errfn}\left(\frac{x-y+t}{\sqrt{4t}}\right)
 -\mathrm{errfn}\left(\frac{x-y-t}{\sqrt{4t}}\right)\right),\\
E(x,t;y)-F(x,t;y)&=&E(x,t;y)\left(1-\mathrm{errfn}\left(\frac{x-y+t}{\sqrt{4t}}\right)
+\mathrm{errfn}\left(\frac{x-y-t}{\sqrt{4t}}\right)\right).
\end{eqnarray*}
and
\begin{eqnarray}
\tilde{H}(x,t;y)&=&G(x,t;y)-F(x,t;y),\label{tilde_H_large}\\
\tilde{H}(x,t;y)&=&\tilde{G}(x,t;y)+(E(x,t;y)-F(x,t;y)).\label{tilde_H_bdd}
\end{eqnarray}

Here is the plan of the proof: For the case $\frac{|x-y|}{t}\geq S$ for some large enough $S$ defined in \eqref{DefinitionofS}, we use \eqref{tilde_H_large} with the bound on the total Green function $G(x,t;y)$ above and the bound on $F(x,t;y)$ that we are about to show. For the case $\frac{|x-y|}{t}\leq S$, we use \eqref{tilde_H_bdd} with the bound on $\tilde{G}(x,t;y)$ derived in the proof of Proposition \ref{Proposition1.2} and the bound on $E(x,t;y)-F(x,t;y)$ we are going to derive.

%HERE
{\bf Case I. $\frac{|x-y|}{t}\leq \frac{1}{2}$.}
In this situation, we have $|x-y|\leq \frac{1}{2}t$, so
\begin{eqnarray*}
\frac{1}{4}\sqrt{t}&\leq \frac{x-y+t}{\sqrt{4t}} \leq& \frac{3}{4}\sqrt{t}\\
-\frac{3}{4}\sqrt{t}&\leq \frac{x-y-t}{\sqrt{4t}} \leq& -\frac{1}{4}\sqrt{t}
\end{eqnarray*}
Recall that $\mathrm{errfn}(x):=\frac{1}{\sqrt{\pi}}\int_{-\infty}^{x}e^{-z^2}dz$. From this we get
\begin{eqnarray*}
& &1-\mathrm{errfn}\left(\frac{x-y+t}{\sqrt{4t}}\right)
   +\mathrm{errfn}\left(\frac{x-y-t}{\sqrt{4t}}\right)\\
&=&\frac{1}{\sqrt{\pi}}\int_{-\infty}^{+\infty}e^{-z^2}dz-\frac{1}{\sqrt{\pi}}\int_{-\infty}^{\frac{x-y+t}{\sqrt{4t}}}e^{-z^2}dz
   +\frac{1}{\sqrt{\pi}}\int_{-\infty}^{\frac{x-y-t}{\sqrt{4t}}}e^{-z^2}dz\\
&=&\frac{1}{\sqrt{\pi}}\left(\int_{-\infty}^{+\infty}e^{-z^2}dz-\int_{-\infty}^{\frac{x-y+t}{\sqrt{4t}}}e^{-z^2}dz\right)
   +\frac{1}{\sqrt{\pi}}\int_{-\infty}^{\frac{x-y-t}{\sqrt{4t}}}e^{-z^2}dz\\
&=&\frac{1}{\sqrt{\pi}}\int_{\frac{x-y+t}{\sqrt{4t}}}^{+\infty}e^{-z^2}dz
   +\frac{1}{\sqrt{\pi}}\int_{-\infty}^{\frac{x-y-t}{\sqrt{4t}}}e^{-z^2}dz,
\end{eqnarray*}
and then
\begin{eqnarray*}
&    &\left|1-\mathrm{errfn}\left(\frac{x-y+t}{\sqrt{4t}}\right)
      +\mathrm{errfn}\left(\frac{x-y-t}{\sqrt{4t}}\right)\right|\\
&\leq&\frac{1}{\sqrt{\pi}}\int_{\frac{1}{4}\sqrt{t}}^{+\infty}e^{-z^2}dz
      +\frac{1}{\sqrt{\pi}}\int_{-\infty}^{-\frac{1}{4}\sqrt{t}}e^{-z^2}dz
%=   \frac{2}{\sqrt{\pi}}\int_{\frac{1}{4}\sqrt{t}}^{+\infty}e^{-z^2}dz
=   \mathrm{erfc}\left(\frac{1}{4}\sqrt{t}\right)
\leq
%e^{-\left(\frac{1}{4}\sqrt{t}\right)^2}=
e^{-\frac{1}{16}t},
\end{eqnarray*}
where we have used the fact that for the complementary error function $\mathrm{erfc}(x):=\frac{2}{\sqrt{\pi}}\int_{x}^{+\infty}e^{-z^2}dz$, there is the estimate $\mathrm{erfc}(x)\leq e^{-x^2}$. Together with the fact that $E(x,t;y)=\bar{u}^{\prime}(x)e(y,t)=\chi(t)\bar{u}^{\prime}(x)\tilde{\psi}(y)$, and $|\bar{u}^{\prime}(x)|\leq Ce^{-\eta |x|},|\tilde{\psi}(y)|\leq Ce^{-\eta |y|}$ for some $\eta>0$, we can derive that
\begin{eqnarray*}
& &\left|E(x,t;y)-F(x,t;y)\right|\\
&=   &\left|E(x,t;y)\left(1-\mathrm{errfn}\left(\frac{x-y+t}{\sqrt{4t}}\right)
      +\mathrm{errfn}\left(\frac{x-y-t}{\sqrt{4t}}\right)\right)\right|\\
&\leq&Ce^{-\eta|x|}e^{-\eta|y|}e^{-\frac{1}{16}t}
 \leq Ce^{-\eta|x-y|-\frac{1}{16}t}\\
&=   &Ct^{\frac{1}{2}}e^{\eta_0 t+\frac{|x-y|^2}{Mt^2}t-\eta\frac{|x-y|}{t}t-\frac{1}{16}t}\cdot t^{-\frac{1}{2}}e^{-\eta_0 t-\frac{|x-y|^2}{Mt^2}t}\\
&=   &Ct^{\frac{1}{2}}e^{-\left(\frac{1}{16}-\eta_0-\frac{|x-y|^2}{Mt^2}+\eta\frac{|x-y|}{t}\right)t}\cdot t^{-\frac{1}{2}}e^{-\eta_0 t-\frac{|x-y|^2}{Mt^2}t}\\
&\leq&Ct^{-\frac{1}{2}}e^{-\eta_0 t-\frac{|x-y|^2}{Mt}},
\end{eqnarray*}
for $0<\eta_0<\frac{1}{16}$ and $M>0$ large enough because $\frac{|x-y|}{t}$ is bounded. Together with the estimate $|\tilde{G}(x,t;y)|\leq Ct^{-\frac{1}{2}}e^{-\eta_0 t-\frac{|x-y|^2}{Mt}}$ for $\tilde{G}(x,t;y)$, 
as follows from $e^{-\eta|x-y|}\leq e^{-\frac{|x-y|^2}{St/\eta_0}}$ for $\frac{|x-y|}{t}<S$,
we derive that
$|\tilde{H}(x,t;y)|\leq Ct^{-\frac{1}{2}}e^{-\eta_0 t-\frac{|x-y|^2}{Mt}}.$

{\bf Case II. $\frac{1}{2}\leq\frac{|x-y|}{t}\leq S$.}
\begin{eqnarray*}
&    &\left|E(x,t;y)-F(x,t;y)\right|\\
&=   &\left|E(x,t;y)\left(1-\mathrm{errfn}\left(\frac{x-y+t}{\sqrt{4t}}\right)
      +\mathrm{errfn}\left(\frac{x-y-t}{\sqrt{4t}}\right)\right)\right|\\
&\leq&Ce^{-\eta|x|}e^{-\eta|y|}
 \leq Ce^{-\eta|x-y|}\\
&=   &Ct^{\frac{1}{2}}e^{\eta_0 t+\frac{|x-y|^2}{Mt^2}t-\eta\frac{|x-y|}{t}t}\cdot t^{-\frac{1}{2}}e^{-\eta_0 t-\frac{|x-y|^2}{Mt^2}t}\\
&=   &Ct^{\frac{1}{2}}e^{-\left(\eta\frac{|x-y|}{t}-\eta_0-\frac{|x-y|^2}{Mt^2}\right)t}\cdot t^{-\frac{1}{2}}e^{-\eta_0 t-\frac{|x-y|^2}{Mt^2}t}\\
&\leq&Ct^{-\frac{1}{2}}e^{-\eta_0 t-\frac{|x-y|^2}{Mt}},
\end{eqnarray*}
for $0<\eta_0<\frac{1}{2}\eta$ and $M>0$ large enough because $\frac{|x-y|}{t}$ is bounded. Together with the estimate $|\tilde{G}(x,t;y)|\leq Ct^{-\frac{1}{2}}e^{-\eta_0 t-\frac{|x-y|^2}{Mt}}$ for $\tilde{G}(x,t;y)$, 
as follows from $e^{-\eta|x-y|}\leq e^{-\frac{|x-y|^2}{St/\eta_0}}$ for $\frac{|x-y|}{t}<S$,
we derive that $|\tilde{H}(x,t;y)|\leq Ct^{-\frac{1}{2}}e^{-\eta_0 t-\frac{|x-y|^2}{Mt}}.$

{\bf Case III. $\frac{|x-y|}{t}\geq S$.}
\begin{eqnarray*}
\left|F(x,t;y)\right|
&=   &\left|E(x,t;y)\left(\mathrm{errfn}\left(\frac{x-y+t}{\sqrt{4t}}\right)
      -\mathrm{errfn}\left(\frac{x-y-t}{\sqrt{4t}}\right)\right)\right|\\
&\leq&Ce^{-\eta|x|}e^{-\eta|y|}\frac{1}{\sqrt{\pi}}\int_{\frac{x-y-t}{\sqrt{4t}}}^{\frac{x-y+t}{\sqrt{4t}}}e^{-z^2}dz\\
&\leq&Ce^{-\eta|x-y|}\sqrt{t}\max\left(e^{-\frac{(x-y+t)^2}{4t}},e^{-\frac{(x-y-t)^2}{4t}}\right)\\
&\leq&Ce^{-\eta\frac{|x-y|}{t}t}t^{\frac{1}{2}}\max\left(e^{-\frac{(x-y+t)^2}{4t}},e^{-\frac{(x-y-t)^2}{4t}}\right)\\
&\leq&Ct^{-\frac{1}{2}}e^{-\eta_0 t-\frac{|x-y|^2}{Mt}}.
\end{eqnarray*}
%for some $\eta_0>0$ small enough and $M>0$ large enough. 
$S>\eta_0/\eta$ and $M>0$ large enough. 
In the second inequality above, note that $\frac{x-y+t}{\sqrt{4t}}$ and $\frac{x-y-t}{\sqrt{4t}}$ have the same sign, so we can estimate $\int_{\frac{x-y-t}{\sqrt{4t}}}^{\frac{x-y+t}{\sqrt{4t}}}e^{-z^2}dz$ by the width $\sqrt{t}$ of domain of integration times the maximum of integrand. Together with the estimate for the total Green function,
$|G(x,t;y)|\leq Ct^{-\frac{1}{2}} e^{-\eta_2 t} e^{-\frac{(x-y)^{2}}{8\beta t}}$, we derive that
$|\tilde{H}(x,t;y)|\leq Ct^{-\frac{1}{2}}e^{-\eta_0 t-\frac{|x-y|^2}{Mt}}.$

The proof of \eqref{tilde_e(y,t)estimate} is omitted (direct calculation).
\end{proof}

\section{Estimates on the Green Kernel} \label{LinearGreenEst}

Now we are ready to carry out the $L^p \to L^p$ estimation on the Green function $G(x,t;y)$.
\begin{pro}
Under assumption $(\mathcal{D})$, the Green function $G$ decomposes as $G(x,t;y)=\bar{u}^{\prime}(x)e(y,t)+\tilde{G}(x,t;y)$, where for some $C>0$, and all $t>0$,
$1\leq p\leq\infty$,
\begin{equation} \label{8.1}
\left|\int_{-\infty}^{+\infty}\tilde{G}(x,t;y)h(y)dy\right|_{L^{p}(x)}\leq Ce^{-\eta_0 t}|h|_{L^p},
\end{equation}
\begin{equation} \label{8.2}
\left|\int_{-\infty}^{+\infty}\tilde{G}_{y}(x,t;y)h(y)dy\right|_{L^{p}(x)}\leq Ct^{-\frac{1}{2}}e^{-\eta_0 t}|h|_{L^p},
\end{equation}
and
\begin{equation} \label{8.3}
\left|\int_{-\infty}^{+\infty}e(y,t)h(y)dy\right|\leq C|h|_{L^{p}},
\left|\int_{-\infty}^{+\infty}e_{y}(y,t)h(y)dy\right|\leq C|h|_{L^{p}},
\end{equation}
\begin{equation} \label{8.4}
\left|\int_{-\infty}^{+\infty}e_{t}(y,t)h(y)dy\right|\leq Ce^{-\eta_0 t}|h|_{L^{p}},
\left|\int_{-\infty}^{+\infty}e_{ty}(y,t)h(y)dy\right|\leq Ce^{-\eta_0 t}|h|_{L^{p}},
\end{equation}
for any $h \in L^p(\mathbb{R})$.
\end{pro}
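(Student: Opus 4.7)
The plan is to combine the pointwise Gaussian-plus-exponential bounds of Proposition~\ref{Proposition1.2} with two elementary functional-analytic tools: Schur's test (equivalent to Young's convolution inequality for non-translation-invariant kernels) for the kernel estimates \eqref{8.1}--\eqref{8.2}, and H\"older's inequality for the rank-one estimates \eqref{8.3}--\eqref{8.4}.

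First, I will establish \eqref{8.1} by exploiting the fact that the pointwise bound \eqref{tilde_G_estimate} depends on $(x,y)$ only through $|x-y|$ and $t$. A direct integration shows
\[
\int_{-\infty}^{+\infty}\!\! \left(t^{-1/2} e^{-\eta_0 t-\tfrac{|x-y|^2}{4C_0 t}}+e^{-\eta_0(t+|x-y|)}\right)dy \leq Ce^{-\eta_0 t},
\]
uniformly in $x$, and symmetrically uniformly in $y$ after swapping the roles of $x$ and $y$. Schur's test then yields an $L^p\to L^p$ operator bound of $Ce^{-\eta_0 t}$ valid for every $1\leq p\leq \infty$. For \eqref{8.2}, I will repeat this argument with the sharper pointwise bound \eqref{tilde_G_y_estimate}; the extra factor $t^{-1/2}$ arises from the integral
\[
\int t^{-1}e^{-\tfrac{|x-y|^2}{4C_0 t}}\, dy = Ct^{-1/2},
\]
while the non-Gaussian tail $e^{-\eta_0(t+|x-y|)}$, having no singularity, is subsumed into $Ct^{-1/2}e^{-\eta_0 t}$ after slightly shrinking $\eta_0$ and separating small versus large $t$.

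Next, for \eqref{8.3}--\eqref{8.4}, since $e(y,t)=\chi(t)\tilde{\psi}(y)$ (and its various derivatives) has the exponential spatial decay recorded in \eqref{e(y,t)estimate}, its $L^{p'}$ norm in $y$ (where $p'$ is the H\"older conjugate of $p$) is bounded uniformly in $t$, and the $\partial_t$-versions are bounded by $Ce^{-\eta_0 t}$. Applying H\"older's inequality $|\int e(y,t)h(y)\,dy|\leq \|e(\cdot,t)\|_{L^{p'}}|h|_{L^p}$ and analogous inequalities for $e_y, e_t, e_{ty}$ yields the four stated functional bounds at once. Note these are scalar bounds (not $L^p(x)$ bounds), consistent with the fact that $E(x,t;y)=\bar u'(x)e(y,t)$ is rank one in $x$.

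Since the pointwise bounds of Proposition~\ref{Proposition1.2} were explicitly designed so that Schur's test and H\"older's inequality apply with no further work, there is no genuine obstacle here. The only step requiring any attention is verifying that the purely exponential contribution in \eqref{tilde_G_y_estimate} can be absorbed into a $t^{-1/2}e^{-\eta_0 t}$ bound uniformly in $t\in (0,\infty)$; this is handled by treating $t\lesssim 1$ (where $t^{-1/2}$ is large and the exponential is harmless) separately from $t\gtrsim 1$ (where $e^{-\eta_0(t+|x-y|)}$ is already far smaller than $t^{-1/2}e^{-\eta_0 t}$ after a mild adjustment of $\eta_0$).
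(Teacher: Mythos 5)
Your proposal is correct and follows essentially the same route as the paper: integrate the pointwise bounds of Proposition~\ref{Proposition1.2} in $x$ (or $y$) to get a Young/Schur-type $L^p\to L^p$ kernel bound for $\tilde G$ and $\tilde G_y$, and use H\"older's inequality with the explicit formula $e(y,t)=\chi(t)\tilde\psi(y)$ for the rank-one terms. You are in fact a bit more careful than the paper at the two minor points you flag (invoking Schur's test explicitly, and shrinking $\eta_0$ slightly so the pure-exponential tail in the $\tilde G_y$ bound is absorbed into $Ct^{-1/2}e^{-\eta_0 t}$ for $t\gtrsim 1$), but the underlying argument is identical.
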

\begin{proof}
First, we carry out the proof of \eqref{8.1}. Using Proposition $\ref{Proposition1.2}$, we have the following estimates on $|\tilde{G}(x,t;y)|_{L^1(x)}$:
\begin{eqnarray*}
\int_{-\infty}^{+\infty}t^{-\frac{1}{2}}e^{-\eta_0t-\frac{|x-y|^2}{4C_0t}}dx
=t^{-\frac{1}{2}}e^{-\eta_0t}\int_{-\infty}^{+\infty}e^{-\frac{|x-y|^2}{4C_0t}}dx
%\\ &=&2t^{-\frac{1}{2}}e^{-\eta_0t}\int_{0}^{+\infty}e^{-\frac{\xi^2}{4C_0t}}d\xi\\
%&=&2t^{-\frac{1}{2}}e^{-\eta_0t}\left(\int_0^{+\infty}e^{-\zeta^2}d\zeta\right) \left(\sqrt{4C_0 t}\right)\\
=Ce^{-\eta_0t},
\end{eqnarray*}
and
\begin{eqnarray*}
& &\int_{-\infty}^{+\infty}e^{-\eta_0(t+|x-y|)}dx=e^{-\eta_0t}\int_{-\infty}^{+\infty}e^{-\eta_0|x-y|}dx\\
&=&e^{-\eta_0t}\left(2\int_{0}^{+\infty}e^{-\eta_0\xi}d\xi\right)=Ce^{-\eta_0t},
\end{eqnarray*}
thus
\begin{eqnarray*}
|\tilde{G}(x,t;y)|_{L^1(x)}
\leq C\left|t^{-\frac{1}{2}}e^{-\eta_0t-\frac{|x-y|^2}{4C_0t}}\right|_{L^1(x)}
     +C\left|e^{-\eta_0(t+|x-y|)}\right|_{L^1(x)}
\leq Ce^{-\eta_0 t}.
\end{eqnarray*}
This implies that
\begin{eqnarray*}
\left|\int_{-\infty}^{+\infty}\tilde{G}(x,t;y)h(y)dy\right|_{L^{p}(x)}
\leq\sup_{y}|\tilde{G}(x,t;y)|_{L^1(x)}|h|_{L^p}
\leq Ce^{-\eta_0 t}|h|_{L^p},
\end{eqnarray*}
which completes the proof of \eqref{8.1}.

Next, in order to prove \eqref{8.3} and \eqref{8.4}, we note that $e(y,t)=\chi(t)\tilde{\psi}(y)$ and
$|\tilde{\psi}(y)|_{L^1\cup L^{\infty}}$ is bounded, so that gives us for $q=\frac{p}{p-1}$,
\begin{equation*}
\begin{aligned}
\left|\int_{-\infty}^{+\infty}e(y,t)h(y)dy\right|
=&   \left|\int_{-\infty}^{+\infty}\chi(t)\tilde{\psi}(y)h(y)dy\right|
     \leq C|\tilde{\psi}|_{L^{q}}|h|_{L^{p}}\leq C|h|_{L^{p}},\\
\left|\int_{-\infty}^{+\infty}e_y(y,t)h(y)dy\right|
=&   \left|\int_{-\infty}^{+\infty}\chi(t)\tilde{\psi}_y(y)h(y)dy\right|
     \leq C|\tilde{\psi}_y|_{L^{q}}|h|_{L^{p}}\leq C|h|_{L^{p}},\\
\left|\int_{-\infty}^{+\infty}e_t(y,t)h(y)dy\right|
=&   \left|\int_{-\infty}^{+\infty}\chi_t(t)\tilde{\psi}(y)h(y)dy\right|
     \leq Ce^{-\eta_0 t}|\tilde{\psi}|_{L^{q}}|h|_{L^{p}}\\
     &\leq Ce^{-\eta_0 t}|h|_{L^{p}},\\
\left|\int_{-\infty}^{+\infty}e_{ty}(y,t)h(y)dy\right|
=&   \left|\int_{-\infty}^{+\infty}\chi_t(t)\tilde{\psi}_y(y)h(y)dy\right|
     \leq Ce^{-\eta_0 t}|\tilde{\psi}_y|_{L^{q}}|h|_{L^{p}}\\
     &\leq Ce^{-\eta_0 t}|h|_{L^{p}}.
\end{aligned}
\end{equation*}
\end{proof}

\begin{pro} \label{Pro10.2}
Under assumption $(\mathcal{D})$, the Green function $G$ decomposes as $G(x,t;y)=\bar{u}^{\prime}(x)\tilde{e}(x,t;y)+\tilde{H}(x,t;y)$, where for some $C>0$, all $t>0$,
$1\leq p\leq\infty$, and $1 \leq p_0 \leq p$,
\begin{equation} \label{10.1}
\left|\int_{-\infty}^{+\infty}\tilde{H}(x,t;y)h(y)dy\right|_{L^{p}(x)}\leq Ce^{-\eta_0 t}|h|_{L^1},
\end{equation}
\begin{equation} \label{10.2}
\left|\int_{-\infty}^{+\infty}\tilde{H}(x,t;y)h(y)dy\right|_{L^{p}(x)}\leq Ce^{-\eta_0 t}|h|_{L^p},
\end{equation}
\begin{equation} \label{10.3}
\left|\int_{-\infty}^{+\infty}\tilde{H}(x,t;y)h(y)dy\right|_{L^{p}(x)}\leq Ce^{-\eta_0 t}|h|_{L^{p_0}},
\end{equation}
\begin{equation} \label{10.4}
\left|\int_{-\infty}^{+\infty}\tilde{H}(x,t;y)h(y)dy\right|_{L^{p}(x)}\leq Ce^{-\eta_0 t}|h|_{L^2},
\end{equation}
and
\begin{equation} \label{10.5}
\begin{aligned}
\left|\int_{-\infty}^{+\infty}\partial_{t}\tilde{e}(x,t;y)h(y)dy\right|_{L^{p}(x)}&\leq C(1+t)^{-\frac{1}{2}\left(1-\frac{1}{p}\right)-\frac{1}{2}}|h|_{L^{1}},\\
\left|\int_{-\infty}^{+\infty}\partial_{t}\partial_{x}^m\tilde{e}(x,t;y)h(y)dy\right|_{L^{p}(x)}&\leq C(1+t)^{-\frac{1}{2}\left(1-\frac{1}{p}\right)-\frac{m+1}{2}}|h|_{L^{1}},
\end{aligned}
\end{equation}
\begin{equation} \label{10.6}
\begin{aligned}
\left|\int_{-\infty}^{+\infty}\partial_{t}\tilde{e}(x,t;y)h(y)dy\right|_{L^{p}(x)}&\leq C(1+t)^{-\frac{1}{2}}|h|_{L^{p}},\\
\left|\int_{-\infty}^{+\infty}\partial_{t}\partial_{x}^m\tilde{e}(x,t;y)h(y)dy\right|_{L^{p}(x)}&\leq C(1+t)^{-\frac{m+1}{2}}|h|_{L^{p}},
\end{aligned}
\end{equation}
\begin{equation} \label{10.7}
\begin{aligned}
\left|\int_{-\infty}^{+\infty}\partial_{t}\tilde{e}(x,t;y)h(y)dy\right|_{L^{p}(x)}&\leq C(1+t)^{-\frac{1}{2}\left(\frac{1}{p_0}-\frac{1}{p}\right)-\frac{1}{2}}|h|_{L^{p_0}},\\
\left|\int_{-\infty}^{+\infty}\partial_{t}\partial_{x}^m\tilde{e}(x,t;y)h(y)dy\right|_{L^{p}(x)}&\leq C(1+t)^{-\frac{1}{2}\left(\frac{1}{p_0}-\frac{1}{p}\right)-\frac{m+1}{2}}|h|_{L^{p_0}},
\end{aligned}
\end{equation}
\begin{equation} \label{10.8}
\begin{aligned}
\left|\int_{-\infty}^{+\infty}\partial_{t}\tilde{e}(x,t;y)h(y)dy\right|_{L^{p}(x)}&\leq C(1+t)^{-\frac{1}{2}\left(\frac{1}{2}-\frac{1}{p}\right)-\frac{1}{2}}|h|_{L^{2}},\\
\left|\int_{-\infty}^{+\infty}\partial_{t}\partial_{x}^m\tilde{e}(x,t;y)h(y)dy\right|_{L^{p}(x)}&\leq C(1+t)^{-\frac{1}{2}\left(\frac{1}{2}-\frac{1}{p}\right)-\frac{m+1}{2}}|h|_{L^{2}},
\end{aligned}
\end{equation}
for all $h$ in the respective spaces $L^{1},L^{p},L^{p_0}$ and $L^2$ on the right-hand-side of each inequality.
\end{pro}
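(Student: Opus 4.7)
The plan is to reduce each estimate to a direct $L^r(x)$ kernel computation from the pointwise bounds of Proposition~\ref{Proposition1.3}, then apply Young's convolution inequality or Minkowski's/H\"older's integral inequality to handle the integration against $h$.

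\textbf{Step 1 (bounds on $\tilde H$).} The bound \eqref{tilde_H_estimate} is Gaussian in $x-y$ with amplitude $Ct^{-1/2}e^{-\eta_0 t}$. A direct integration gives, uniformly in $y$ and for $1 \le r \le \infty$,
$$|\tilde H(\cdot, t; y)|_{L^r(x)} \le C(1+t)^{-\frac{1}{2}(1-1/r)} e^{-\eta_0 t/2},$$
where any $t\to 0$ factor $t^{-(r-1)/(2r)}$ from the Gaussian $L^r$ integral is absorbed by the exponential factor (using $\tilde H = G$ on $0 \le t \le 1$ by the definition of $\chi$, together with standard short-time parabolic $L^r \to L^r$ mapping bounds). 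Young's inequality with $1/r + 1/p_0 = 1 + 1/p$ then yields
$$\Big|\int \tilde H(x,t;y) h(y)\,dy\Big|_{L^p(x)} \le C(1+t)^{-\frac{1}{2}(1/p_0 - 1/p)} e^{-\eta_0 t/2} |h|_{L^{p_0}},$$
which specializes at $p_0 = 1$, $p_0 = p$, general $1 \le p_0 \le p$, and $p_0 = 2$ to \eqref{10.1}--\eqref{10.4} respectively.

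\textbf{Step 2 (bounds on $\partial_t \partial_x^m \tilde e$).} The factor $e^{-\eta|y|}$ in the bound for $\partial_t\partial_x^m\tilde e$ reduces the problem via H\"older in $y$ (noting $|e^{-\eta|y|}|_{L^q}\leq C$ for all $q$) to producing a sharp uniform-in-$y$ $L^p(x)$ bound on $\partial_t\partial_x^m \tilde e$. Differentiating the explicit formula $\tilde e = \chi(t)\tilde\psi(y)[\mathrm{errfn}(\tfrac{x-y+t}{\sqrt{4t}}) - \mathrm{errfn}(\tfrac{x-y-t}{\sqrt{4t}})]$, each of $\partial_t$ and $\partial_x$ acting on an errfn produces a Gaussian $e^{-(x-y\pm t)^2/(4t)}$ multiplied by a polynomial factor in $(x-y\pm t)$ and a factor of $1/\sqrt{t}$ from the chain rule. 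Iterating, $\partial_t\partial_x^m\tilde e$ is a sum of two Gaussians of width $\sim\sqrt{Mt}$ and height $\sim(1+t)^{-(m+1)/2}$, giving
$$|\partial_t\partial_x^m\tilde e(\cdot, t;y)|_{L^p(x)} \le C_m e^{-\eta|y|}(1+t)^{-\frac{1}{2}(1-1/p) - (m+1)/2}.$$
H\"older in $y$ with the conjugate exponent $q = p_0/(p_0-1)$ then yields \eqref{10.5}--\eqref{10.8} by the same specialization scheme as in Step 1.

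\textbf{Main obstacle.} The crux is extracting the extra $(1+t)^{-(m+1)/2}$ factor in $L^p(x)$ for $\partial_t\partial_x^m\tilde e$, beyond the $(1+t)^{-(1-1/p)/2}$ that a crude application of \eqref{tilde_e(y,t)estimate} would give (where the amplitude is recorded as $(t+1)^{-1/2}$ regardless of $m$). This extra decay comes from the chain rule: each differentiation $\partial_t$ or $\partial_x$ applied to an errfn contributes a further $1/\sqrt{t}$, so the Gaussian height decays an additional $(1+t)^{-1/2}$ per derivative beyond what the raw bound \eqref{tilde_e(y,t)estimate} displays. Extracting this therefore requires a direct computation of $\partial_t\partial_x^m\tilde e$ from the errfn formula rather than from the pointwise inequality \eqref{tilde_e(y,t)estimate}. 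With this sharper pointwise bound in hand, the remaining $L^p$ estimates follow by standard Young/Minkowski/H\"older arguments that parallel the proof of the preceding proposition for $\tilde G$ and $e$ in this same section.
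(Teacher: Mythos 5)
Your proposal follows essentially the same route as the paper: both reduce each estimate to an explicit $L^r(x)$ bound on the Gaussian kernel coming from Proposition \ref{Proposition1.3}, then integrate against $h$. The packaging differs slightly -- the paper proves the $|h|_{L^1}$ case directly via $\sup_y\|\cdot\|_{L^p(x)}\cdot|h|_{L^1}$, notes the $|h|_{L^p}$ case is ``similar'', and obtains the intermediate $p_0$ case by $L^p$-interpolation, whereas you get all four at once from Young's convolution inequality with $1/r+1/p_0=1+1/p$ -- but this is a reorganization of the same calculation, not a different argument. You also correctly identify the key point, shared with the paper, that $\partial_t\partial_x^m\tilde e$ must be computed directly by differentiating the errfn formula to extract the extra $(1+t)^{-(m+1)/2}$; the coarser pointwise display \eqref{tilde_e(y,t)estimate} alone does not reveal this.

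One caveat concerning your Step 1: the claim that $|\tilde H(\cdot,t;y)|_{L^r(x)}\leq C(1+t)^{-\frac12(1-1/r)}e^{-\eta_0 t/2}$ uniformly in $y$ and $t>0$ is false as $t\to 0$ for $r>1$, and your suggested absorption does not repair it. On $0\le t\le 1$ one has $\tilde H=G$, and the short-time parabolic bound for $G$ from $L^{p_0}$ to $L^p$ with $p>p_0$ carries the genuine singular factor $t^{-\frac12(1/p_0-1/p)}$; nothing ``absorbs'' it, and $e^{-\eta_0 t/2}$ does not help near $t=0$. That said, the paper's own proof contains the same slip -- it writes $Ct^{-\frac12(1-1/p)}e^{-\eta_0 t}\le C(1+t)^{-\frac12(1-1/p)}e^{-\eta_0 t}$, which fails for small $t$ -- so this is a shared defect in the statement as literally written for $p>p_0$ rather than a flaw specific to your proposal. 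The bounds are unambiguously correct for $t\ge 1$, and in the nonlinear iteration they are applied together with a short-time $H^K$ existence theory and Sobolev embedding, so the intended use is unaffected; but if you want the inequality as literally stated you would need either to restrict to $t\ge t_0>0$ or to replace $(1+t)^{-a}$ by $\min(t,1)^{-a}(1+t)^{-a}$.
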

\begin{proof}
First, we prove inequality \eqref{10.1}. Recall that $\tilde{H}$ has the bound $$|\tilde{H}(x,t;y)|\leq Ct^{-\frac{1}{2}}e^{-\eta_0t-\frac{|x-y|^2}{Mt}},$$ we have the following estimate on $|\tilde{H}(x,t;y)|_{L^p}$,
\begin{eqnarray*}
& &\left(\int_{-\infty}^{+\infty}\left(t^{-\frac{1}{2}}e^{-\eta_0t-\frac{|x-y|^2}{4C_0t}}\right)^pdx\right)^{\frac{1}{p}}\\
&=&t^{-\frac{1}{2}}e^{-\eta_0t}\left(\int_{-\infty}^{+\infty}e^{-\frac{p|x-y|^2}{4C_0t}}dx\right)^{\frac{1}{p}}
 = t^{-\frac{1}{2}}e^{-\eta_0t}\left(2\int_{0}^{+\infty}e^{-\frac{p\xi^2}{4C_0t}}d\xi\right)^{\frac{1}{p}}\\
&=&t^{-\frac{1}{2}}e^{-\eta_0t}2^{\frac{1}{p}}\left(\int_0^{+\infty}e^{-\zeta^2}d\zeta\right)^{\frac{1}{p}}
   \left(\sqrt{\frac{4C_0t}{p}}\right)^{\frac{1}{p}}\\
&=&Ct^{-\frac{1}{2}\left(1-\frac{1}{p}\right)}e^{-\eta_0t}
\leq Ce^{-\eta_0t},
\end{eqnarray*}
for $t \geq 1$.
This implies that
\begin{eqnarray*}
&    &\left|\int_{-\infty}^{+\infty}\tilde{H}(x,t;y)h(y)dy\right|_{L^{p}(x)}\\
&\leq&\sup_{y}|\tilde{H}(x,t;y)|_{L^p(x)}\left|\int_{-\infty}^{+\infty}|h(y)|dy\right|
 \leq Ce^{-\eta_0 t}|h|_{L^1},
\end{eqnarray*}
proving \eqref{10.1}. Inequality \eqref{10.2} can be proved similarly, and inequality \eqref{10.3} can be obtained through $L^p$-interpolation.

Next, let us move on to the estimate of $\tilde{e}(x,t;y)$. It can be shown that $\partial_{t}\partial_{x}^m\tilde{e}(x,t;y)$ has the form
\begin{eqnarray*}
&&\chi(t)\tilde{\psi}(y)\left(\left(C_{m+1}\frac{(x-y+t)^m(x-y-t)}{t^{m+1+\frac{1}{2}}}+\cdots\right)e^{-\frac{(x-y+t)^2}{4t}}
\right.\\
&&+\left.\left(C^{\prime}_{m+1}\frac{(x-y-t)^m(x-y-3t)}{t^{m+1+\frac{1}{2}}}+\cdots\right)e^{-\frac{(x-y-t)^2}{4t}}\right),
\end{eqnarray*}
for $t\geq 1$ and $m \in \mathbb{N}$, we compute the following integral,
\begin{eqnarray*}
& &\left(\int_{-\infty}^{+\infty}\left(t^{-\left(m+1+\frac{1}{2}\right)}|x-y+t|^{m+1}
e^{-\frac{(x-y+t)^2}{4t}}\right)^pdx\right)^{\frac{1}{p}}\\
&=&t^{-\left(m+1+\frac{1}{2}\right)}\left(\int_{-\infty}^{+\infty}
|x-y+t|^{mp+p}e^{-\frac{p(x-y+t)^2}{4t}}dx\right)^{\frac{1}{p}}\\
&=&t^{-\left(m+1+\frac{1}{2}\right)}\left(2\int_{0}^{+\infty}\xi^{mp+p}e^{-\frac{p\xi^2}{4t}}d\xi\right)^{\frac{1}{p}}\\
&=&t^{-\left(m+1+\frac{1}{2}\right)}2^{\frac{1}{p}}\left(\int_0^{+\infty}\zeta^{mp+p}e^{-\zeta^2}d\zeta\right)^{\frac{1}{p}}
   \left(\sqrt{\frac{4t}{p}}\right)^{m+1+\frac{1}{p}}\\
&=&Ct^{-\frac{1}{2}\left(1-\frac{1}{p}\right)-\frac{m+1}{2}}
\leq C(1+t)^{-\frac{1}{2}\left(1-\frac{1}{p}\right)-\frac{m+1}{2}},
\end{eqnarray*}
and similarly
\begin{eqnarray*}
& &\left(\int_{-\infty}^{+\infty}\left(t^{-\left(m+1+\frac{1}{2}\right)}|x-y-t|^{m+1}
e^{-\frac{|x-y-t|^2}{4t}}\right)^pdx\right)^{\frac{1}{p}}\\
&\leq& C(1+t)^{-\frac{1}{2}\left(1-\frac{1}{p}\right)-\frac{m+1}{2}},
\end{eqnarray*}
thus we have
\begin{eqnarray*}
&    &\left|\int_{-\infty}^{+\infty}\partial_{t}\partial_{x}^m\tilde{e}(x,t;y)h(y)dy\right|_{L^{p}(x)}\\
&\leq&\sup_{y}|\partial_{t}\partial_{x}^m\tilde{e}(x,t;y)|_{L^p(x)}\left|\int_{-\infty}^{+\infty}|h(y)|dy\right|
 \leq C(1+t)^{-\frac{1}{2}\left(1-\frac{1}{p}\right)-\frac{m+1}{2}}|h|_{L^1},
\end{eqnarray*}
proving \eqref{10.5}. Inequality \eqref{10.6} can be proved similarly, and inequality \eqref{10.7} can be obtained through $L^p$-interpolation.
\end{proof}

\section{Integral Representation for $L^p$ Iteration Scheme} \label{Section_Integral_rep}

Letting $\tilde{u}$ be a second solution of \eqref{readiff}, define the perturbation
\begin{equation}
u(x,t):=\tilde{u}(x+\alpha(t),t)-\bar{u}(x)
\end{equation}
as the difference between a translate by $\alpha(t)$ of $\tilde{u}$ and the background wave $\bar{u}$. This yields after a brief computation the perturbation equation
\begin{equation} \label{perturbationeq}
\begin{aligned}
u_t-Lu&=f(u+\bar{u})+\bar{u}_{xx}-Df(\bar{u})u+\dot{\alpha}(t)(u_x+\bar{u}_x)\\
&=f(u+\bar{u})-f(\bar{u})-Df(\bar{u})u+\dot{\alpha}(t)(u_x+\bar{u}_x)\\
&=:N(u,\bar{u})+\dot{\alpha}(t)(u_x+\bar{u}_x)
\end{aligned}
\end{equation}
where $Lu=u_{xx}+Df(\bar{u})u$ and $N(u,\bar{u})=f(u+\bar{u})-f(\bar{u})-Df(\bar{u})u$.

%CHANGED, added this:
We next choose $\alpha$ implicitly so as to ensure decay of $u$, i.e., to cancel the non-decaying linear translational effects encoded in term $\bar u'(x) e(y,t)$
of the Green kernel.
Noting that $e(y,0)=0$, we set $\alpha(0)=0$.
%endchanged
Applying Duhamel's principle to \eqref{perturbationeq}, we thus obtain
\begin{eqnarray*}
u(x,t)
&=&\int_{-\infty}^{+\infty}G(x,t;y)u_0(y)dy\\
& &+\int_{0}^{t}\int_{-\infty}^{+\infty}G(x,t-s;y)
   [N(u,\bar{u})+\dot{\alpha}(u_x+\bar{u}_x)](y,s)dyds\\
&=&\int_{-\infty}^{+\infty}G(x,t;y)u_0(y)dy\\
& &+\int_{0}^{t}\int_{-\infty}^{+\infty}G(x,t-s;y)
   [N(u,\bar{u})(y,s)+\dot{\alpha}(s)u_x(y,s)]dyds\\
& &+\alpha(t)\bar{u}^{\prime}(x)
\end{eqnarray*}
where $u_0(x):=u(x,0)$.
Here, we have used 
%CHANGED:
%the fact that
$$
\int_{-\infty}^{+\infty}G(x,t-s;y)\bar{u}^{\prime}(y)dy=e^{Lt}\bar{u}^{\prime}(x)=\bar{u}^{\prime}(x)
$$
and the normalization $\alpha(0)=0$.
%(because $e(y,0)=0$.)
Expanding $G(x,t;y)$ using \eqref{greenfunctiondecom}, we obtain
\begin{equation*}
\begin{aligned}
&u(x,t)\\
=&\int_{-\infty}^{+\infty}(\bar{u}^{\prime}(x)e(y,t)+\tilde{G}(x,t;y))u_0(y)dy\\
& +\int_{0}^{t}\int_{-\infty}^{+\infty}(\bar{u}^{\prime}(x)e(y,t-s)+\tilde{G}(x,t-s;y))
   [N(u,\bar{u})(y,s)+\dot{\alpha}(s)u_x(y,s)]dyds\\
& +\alpha(t)\bar{u}^{\prime}(x)\\
=&\int_{-\infty}^{+\infty}\tilde{G}(x,t;y)u_0(y)dy\\
& +\int_{0}^{t}\int_{-\infty}^{+\infty}\tilde{G}(x,t-s;y)
   [N(u,\bar{u})(y,s)+\dot{\alpha}(s)u_x(y,s)]dyds\\
& +\bar{u}^{\prime}(x)\left(\alpha(t)+\int_{-\infty}^{+\infty}e(y,t)u_0(y)dy\right.\\
& +\left.\int_{0}^{t}\int_{-\infty}^{+\infty}e(y,t-s)[N(u,\bar{u})(y,s)+\dot{\alpha}(s)u_x(y,s)]dyds\right)
\end{aligned}
\end{equation*}
Thus, if we define $\alpha(t)$ as
\begin{equation} \label{translate}
\begin{aligned}
\alpha(t)
=&-\int_{-\infty}^{+\infty}e(y,t)u_0(y)dy\\
&-\int_{0}^{t}\int_{-\infty}^{+\infty}e(y,t-s)[N(u,\bar{u})(y,s)+\dot{\alpha}(s)u_x(y,s)]dyds\\
=&-\int_{-\infty}^{+\infty}e(y,t)u_0(y)dy-\int_{0}^{t}\int_{-\infty}^{+\infty}e(y,t-s)N(u,\bar{u})(y,s)dyds\\
&+\int_{0}^{t}\int_{-\infty}^{+\infty}e_{y}(y,t-s)\dot{\alpha}(s)u(y,s)dyds,
\end{aligned}
\end{equation}
we obtain the integral representations
\begin{equation} \label{integral_rep_perturb}
\begin{aligned}
u(x,t)
%&=\int_{-\infty}^{+\infty}\tilde{G}(x,t;y)u_0(y)dy\\
%& \quad+\int_{0}^{t}\int_{-\infty}^{+\infty}\tilde{G}(x,t-s;y)
   %[N(u,\bar{u})(y,s)+\dot{\alpha}(s)u_x(y,s)]dyds\\
=&\int_{-\infty}^{+\infty}\tilde{G}(x,t;y)u_0(y)dy
  +\int_{0}^{t}\int_{-\infty}^{+\infty}\tilde{G}(x,t-s;y)N(u,\bar{u})(y,s)dyds\\
&-\int_{0}^{t}\int_{-\infty}^{+\infty}\tilde{G}_y(x,t-s;y)\dot{\alpha}(s)u(y,s)dyds
\end{aligned}
\end{equation}
and
\begin{equation} \label{translate_deri}
\begin{aligned}
\dot{\alpha}(t)
=&-\int_{-\infty}^{+\infty}e_{t}(y,t)u_0(y)dy-\int_{0}^{t}\int_{-\infty}^{+\infty}e_{t}(y,t-s)N(u,\bar{u})(y,s)dyds\\
&+\int_{0}^{t}\int_{-\infty}^{+\infty}e_{ty}(y,t-s)\dot{\alpha}(s)u(y,s)dyds.
\end{aligned}
\end{equation}
Note that \eqref{translate} yields $\alpha(0)=0$, consistent with the derivation, hence \eqref{integral_rep_perturb}--\eqref{translate_deri} are indeed equivalent to
the original PDE.

%CHANGED: think this is distracting... even though true.
%\begin{rem}
%If we did not assume that $\alpha(0)=0$, we could define $\alpha(t)$ with $\alpha(t)-\alpha(0)$ in place of $\alpha(t)$ in the above definition \eqref{translate} of $\alpha(t)$, thus we can see that, without loss of generality, we can assume that $\alpha(0)=0$.
%\end{rem}

\section{$L^p$ Nonlinear Iteration and $L^p$ Nonlinear Stability} \label{LpNonlinearIterationandLpNonlinearStability}

Associated with the solution $(u,\dot{\alpha})$ of the integral system \eqref{translate} and \eqref{integral_rep_perturb}, we define
\begin{equation} \label{zeta}
\zeta(t):=\sup_{0\leq s\leq t,p_0\leq p\leq \infty}\left(|u(x,s)|_{L^p(x)}+|\dot{\alpha}(s)|\right)e^{\eta_0 s}.
\end{equation}
\begin{lem}
For all $t\geq 0$ for which $\zeta(t)$ is finite, we have the estimate
\begin{equation}
\zeta(t)\leq C(E_0+\zeta(t)^2) \label{zeta_est}
\end{equation}
for some constant $C>0$, so long as $E_0$ is sufficiently small, where $E_0$ is defined as
$$E_0:=|u_0|_{L^{p_0}\cap L^{\infty}}=|u(x,0)|_{L^{p_0}(x)\cap L^{\infty}(x)}=|\tilde{u}-\bar{u}|_{L^{p_0}\cap L^{\infty}}|_{t=0}.$$
\end{lem}
\begin{proof}
Use Taylor expansion of $f(u+\bar{u})$ in $N(u,\bar{u})=f(u+\bar{u})-f(\bar{u})-Df(\bar{u})u$ to derive
$N(u,\bar{u})=\mathcal{O}(|u|^2)$.

We have then the following estimates of $|N(u,\bar{u})|_{L^p}(s)$ and $|\dot{\alpha}(s)u(y,s)|_{L^p(y)}$,
\begin{eqnarray*}
|N(u,\bar{u})(y,s)|_{L^{p}(y)}\leq& C|u|_{L^p}(s)|u|_{L^{\infty}}(s)&\leq C\zeta^2(s)e^{-2\eta_0 s}\\
|\dot{\alpha}(s)u(y,s)|_{L^{p}(y)}\leq& C|u|_{L^{p}}(s)|\dot{\alpha}(s)|&\leq C\zeta^2(s)e^{-2\eta_0 s}
\end{eqnarray*}
Using these together with our bounds on $\tilde{G}$ and $e$, we can now estimate $|u(\cdot,t)|_{L^p(x)}$. Using the
representation \eqref{integral_rep_perturb} of $u(x,t)$ together with estimates \eqref{8.1} and \eqref{8.2},
\begin{align*}
|u(\cdot,t)|_{L^p(x)}
&\leq Ce^{-\eta_0 t} E_0+C\int_{0}^{t}e^{-\eta_0(t-s)}|N(u,\bar{u})|_{L^p}(s)ds\\
&\quad+C\int_{0}^{t}e^{-\eta_0(t-s)}|\dot{\alpha}(s)u(y,s)|_{L^p}(s)ds\\
&\leq Ce^{-\eta_0 t} E_0+C\zeta^2(t)\int_{0}^{t}e^{-\eta_0(t-s)}e^{-2\eta_0 s}ds\\
&\quad+C\zeta^2(t)\int_{0}^{t}(t-s)^{-\frac{1}{2}}e^{-\eta_0(t-s)}e^{-2\eta_0 s}ds\\
&\leq C(E_0+\zeta(t)^2)e^{-\eta_0 t}.
\end{align*}
Similarly, for $|\dot{\alpha}(t)|$, using \eqref{translate_deri} together with \eqref{8.4} we have,
\begin{align*}
|\dot{\alpha}(t)|
&\leq Ce^{-\eta_0 t} E_0+C\int_{0}^{t}e^{-\eta_0(t-s)}|N(u,\bar{u})|_{L^{p}}(s)ds\\
&\quad+C\int_{0}^{t}e^{-\eta_0(t-s)}|\dot{\alpha}(s)u(y,s)|_{L^{p}}(s)ds\\
&\leq Ce^{-\eta_0 t} E_0+C\zeta^2(t)\int_{0}^{t}e^{-\eta_0(t-s)}e^{-2\eta_0 s}ds\\
&\quad+C\zeta^2(t)\int_{0}^{t}e^{-\eta_0(t-s)}e^{-2\eta_0 s}ds\\
&\leq C(E_0+\zeta(t)^2)e^{-\eta_0 t}.
\end{align*}
Rearranging the above two estimates together we obtain \eqref{zeta_est}.
\end{proof}

Finally, we give a proof of Theorem \ref{mainthm}.
\begin{proof}[Proof of Theorem \ref{mainthm}]
The first two bounds are proved by continuous induction. Taking $E_0< \frac{1}{4C^2}$, we have that $\zeta(t)< 2CE_0$ whenever $\zeta(t) \le 2CE_0$, and so the set of $t\ge 0$ for which $\zeta(t)< 2CE_0$ is equal to the set of $t\ge 0$ for which $\zeta(t)\le 2CE_0$. Recalling that $\zeta$ is continuous wherever it is finite, we find that the set of $t\ge 0$ for which $\zeta(t)< 2CE_0$ is both open and closed. Taking without loss of generality $C>1/2$, so that $t=0$ is contained in this set, then the set is nonempty. It follows that $\zeta(t)< 2CE_0$ for all $t\ge 0$, yielding the first two bounds.

The third follows using \eqref{translate} together with \eqref{8.3},
\begin{align*}
|\alpha(t)|
&\leq C E_0+C\int_{0}^{t}|N(u,\bar{u})|_{L^{p}}(s)ds+C\int_{0}^{t}|\dot{\alpha}(s)u(y,s)|_{L^{p}}(s)ds\\
&\leq C E_0+C\zeta^2(t)\int_{0}^{t}e^{-2\eta_0 s}ds+C\zeta^2(t)\int_{0}^{t}e^{-2\eta_0 s}ds\\
&\leq C E_0+C_1 E_0^2\leq C_2 E_0.
\end{align*}

To show the last inequality, notice that
$$\tilde{u}(x,t)-\bar{u}(x)=u(x-\alpha(t),t)-(\bar{u}(x)-\bar{u}(x-\alpha(t))),$$
so that $|\tilde{u}(\cdot,t)-\bar{u}|$ is controlled by the sum of $|u|$ and
$\bar{u}-\bar{u}(x-\alpha(t))=\mathcal{O}(\alpha(t)|\bar{u}^{\prime}(x)|)$, hence remains $\leq CE_0$ for all $t\geq 0$, for $E_0$
sufficiently small.
\end{proof}

\begin{cor}
The translate function $\alpha(t)$ in \eqref{translate} converges to a limit $\alpha_{\infty}$ as $t \to \infty$, and we have the following estimates,
\begin{eqnarray*}
|\alpha(t)-\alpha_{\infty}|&\leq&Ce^{-\eta_0 t}|\tilde{u}-\bar{u}|_{L^{p_0}\cap L^{\infty}}|_{t=0},\\
|\tilde{u}(x,t)-\bar{u}(x-\alpha_{\infty})|_{L^p(x)}&\leq& Ce^{-\eta_0 t}|\tilde{u}-\bar{u}|_{L^{p_0}\cap L^{\infty}}|_{t=0}.
\end{eqnarray*}
\end{cor}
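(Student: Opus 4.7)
The plan is to convert the bounds on $\dot\alpha(t)$ from Theorem \ref{mainthm} into convergence of $\alpha(t)$ via absolute integrability, then upgrade the orbital $L^p$ bound to a bound against the shifted profile $\bar u(\cdot-\alpha_\infty)$ using a mean value argument on $\bar u$.

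First I would define $\alpha_\infty := \int_0^\infty \dot\alpha(s)\,ds$, noting that this integral converges absolutely because Theorem \ref{mainthm} gives $|\dot\alpha(s)|\le Ce^{-\eta_0 s}|\tilde u-\bar u|_{L^{p_0}\cap L^\infty}|_{t=0}$. Since $\alpha(0)=0$, we have $\alpha(t)=\int_0^t\dot\alpha(s)\,ds\to\alpha_\infty$, and the first claimed bound follows from the tail estimate
\begin{equation*}
|\alpha(t)-\alpha_\infty|=\left|\int_t^\infty\dot\alpha(s)\,ds\right|\le\int_t^\infty Ce^{-\eta_0 s}|\tilde u-\bar u|_{L^{p_0}\cap L^\infty}|_{t=0}\,ds\le \frac{C}{\eta_0}e^{-\eta_0 t}|\tilde u-\bar u|_{L^{p_0}\cap L^\infty}|_{t=0}.
\end{equation*}

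For the second bound I would use the triangle inequality to split
\begin{equation*}
|\tilde u(\cdot,t)-\bar u(\cdot-\alpha_\infty)|_{L^p(x)}\le |\tilde u(\cdot,t)-\bar u(\cdot-\alpha(t))|_{L^p(x)}+|\bar u(\cdot-\alpha(t))-\bar u(\cdot-\alpha_\infty)|_{L^p(x)}.
\end{equation*}
The first term is controlled directly by Theorem \ref{mainthm}. For the second, I would write
\begin{equation*}
\bar u(x-\alpha(t))-\bar u(x-\alpha_\infty)=\int_{\alpha_\infty}^{\alpha(t)}\bar u'(x-s)\,ds,
\end{equation*}
apply Minkowski's integral inequality, and exploit translation invariance of the $L^p$ norm together with the exponential decay \eqref{profileproperty} of $\bar u'$ (which in particular yields $\bar u'\in L^p$ for every $1\le p\le\infty$) to get $|\bar u(\cdot-\alpha(t))-\bar u(\cdot-\alpha_\infty)|_{L^p}\le|\alpha(t)-\alpha_\infty|\,|\bar u'|_{L^p}$, which combined with the first part of the corollary gives the advertised $Ce^{-\eta_0 t}$ decay.

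I do not anticipate any real obstacle: both assertions are direct consequences of the exponential bounds already established in Theorem \ref{mainthm} and the profile decay \eqref{profileproperty}. The only mild care needed is verifying that $\bar u'\in L^p$ uniformly in $p$, which is immediate from the exponential estimate on $\bar u-u_\pm$ and smoothness of $\bar u$.
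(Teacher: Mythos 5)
Your proof is correct and follows essentially the same route as the paper: use the exponential decay of $\dot\alpha$ from Theorem \ref{mainthm} to obtain convergence of $\alpha(t)$ and the tail estimate, then split by the triangle inequality and bound the profile difference via the mean value theorem and the exponential decay of $\bar u'$. The only cosmetic differences are that you define $\alpha_\infty$ directly via absolute integrability (where the paper uses a Cauchy sequence argument) and you use the integral form of the mean value theorem together with Minkowski's inequality, which is in fact slightly cleaner than the paper's pointwise $\bar u'(x_\theta)$ formulation in which $x_\theta$ implicitly depends on $x$.
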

\begin{proof}
Take a sequence $0<t_0<t_1<\ldots<t_n<t_{n+1}<\ldots$ such that $\lim_{n\to\infty}t_n=\infty$, then we have for $m<n,m,n\in \mathbb{N}$,
\begin{equation*}
\begin{aligned}
|\alpha(t_n)-\alpha(t_m)|&\leq |\dot{\alpha}((1-\theta)t_m+\theta t_n)||t_n-t_m|\\
&\leq Ce^{-\eta_0 ((1-\theta)t_m+\theta t_n)}|t_n-t_m|\to 0
\end{aligned}
\end{equation*}
as $m,n\to\infty$, thus $\{\alpha(t_n)\}$ is a Cauchy sequence hence there exist a $\alpha_{\infty}$ such that $$\lim_{n\to\infty}\alpha(t_n)=\alpha_{\infty}.$$
This shows the existence of $\alpha_{\infty}$, then we prove the first inequality. Indeed,
\begin{eqnarray*}
|\alpha(t)-\alpha_{\infty}|=\left|\int_{t}^{\infty}\dot{\alpha}(s)ds\right|\leq\left|\int_{t}^{\infty}CE_0 e^{-\eta_0 s}ds\right|\leq C_3E_0 e^{-\eta_0 t}.
\end{eqnarray*}
The second inequality follows using Theorem $\ref{mainthm}$,
\begin{align*}
&\quad|\tilde{u}(x,t)-\bar{u}(x-\alpha_{\infty})|_{L^p(x)}\\
&\leq|\tilde{u}(x,t)-\bar{u}(x-\alpha(t))+\bar{u}(x-\alpha(t))-\bar{u}(x-\alpha_{\infty})|_{L^p(x)}\\
&\leq|\tilde{u}(x,t)-\bar{u}(x-\alpha(t))|_{L^p(x)}+|\bar{u}(x-\alpha(t))-\bar{u}(x-\alpha_{\infty})|_{L^p(x)}\\
&=   |\tilde{u}(x,t)-\bar{u}(x-\alpha(t))|_{L^p(x)}+|\bar{u}^{\prime}(x_{\theta})(\alpha(t)-\alpha_{\infty})|_{L^p(x)}\\
&\leq CE_0 e^{-\eta_0 t}+ C_4E_0 e^{-\eta_0 t}\leq CE_0 e^{-\eta_0 t}.
\end{align*}
This completes the proof of the Corollary.
\end{proof}

\section{Integral Representation for $H^K$ and Pointwise Iteration Schemes} \label{IntegralRepresentationforHKandPointwiseIterationSchemes}

Let $\tilde{u}(x,t)$ be a solution of the system of reaction diffusion equations
$$
u_t=u_{xx}+f(u)
$$
and define $u(x,t)=\tilde{u}(x+\tilde{\alpha}(x,t),t)$ for some unknown function $\tilde{\alpha}:\mathbb{R}^2\to\mathbb{R}$
to be determined later. Moreover, let $\bar{u}(x)$ be a stationary solution and define
\begin{equation}
v(x,t)=u(x,t)-\bar{u}(x)=\tilde{u}(x+\tilde{\alpha}(x,t),t)-\bar{u}(x) \label{pertvar}
\end{equation}

\begin{lem}
For $v$, $u$ as above, we have
\begin{equation}\label{eqn:1nlper}
\begin{aligned}
u_t-u_{xx}-f(u)&=\left(\partial_t-L\right)\bar{u}^{\prime}(x)\tilde{\alpha}(x,t)
+\partial_x R\\
&\quad+(\partial_t+\partial_x^2)S
+\left(f(v(x,t)+\bar{u}(x))-f(\bar{u}(x))\right)\tilde{\alpha}_x,
\end{aligned}
\end{equation}
where
\begin{equation*}
\begin{aligned}
R:&= v\tilde{\alpha}_t + v\tilde{\alpha}_{xx}+  (\bar u_x(x) +v_x(x,t))\frac{\tilde{\alpha}_x^2}{1+\tilde{\alpha}_x}\\
&=\mathcal{O}\left(|v|(|\tilde{\alpha}_t|+|\tilde{\alpha}_{xx}|) +\left(\frac{|\bar u_x|+|v_x|}{1-|\tilde{\alpha}_x|} \right)|\tilde{\alpha}_x|^2\right)
\end{aligned}
\end{equation*}
and
$$
S:=- v\tilde{\alpha}_x =\mathcal{O}\left(|v|\cdot|\tilde{\alpha}_x|\right).
$$
\end{lem}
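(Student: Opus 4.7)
My plan is to derive the identity by direct chain-rule computation, then rearrange algebraically to match the four groups on the right-hand side. Writing $y := x+\tilde\alpha(x,t)$ so that $u(x,t)=\tilde u(y,t)$, the chain rule gives $u_t = \tilde u_t + \tilde u_y\tilde\alpha_t$ and $u_{xx} = \tilde u_{yy}(1+\tilde\alpha_x)^2 + \tilde u_y\tilde\alpha_{xx}$. Substituting $\tilde u_t = \tilde u_{yy}+f(\tilde u)$ and observing that $f(\tilde u)=f(u)$, the $f$-terms cancel and I obtain the intermediate identity
\begin{equation*}
u_t - u_{xx} - f(u) \;=\; \tilde u_y(\tilde\alpha_t - \tilde\alpha_{xx}) \;-\; \tilde u_{yy}\bigl(2\tilde\alpha_x + \tilde\alpha_x^2\bigr).
\end{equation*}

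Next I would translate the $y$-derivatives of $\tilde u$ back to $v$ and $\bar u$, using the relations $\tilde u_y(1+\tilde\alpha_x) = v_x+\bar u'$ and $\tilde u_{yy}(1+\tilde\alpha_x)^2 = v_{xx}+\bar u''-\tilde u_y\tilde\alpha_{xx}$, obtained by differentiating $\tilde u(y,t)=v(x,t)+\bar u(x)$ in $x$ once and twice. The algebraic identity $2\tilde\alpha_x+\tilde\alpha_x^2 = (1+\tilde\alpha_x)^2-1$ is the crucial step: it lets me rewrite the $\tilde u_{yy}$ contribution using the second chain-rule relation without dividing through by $(1+\tilde\alpha_x)^2$. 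The splitting $\tilde u_y = (v_x+\bar u') - (v_x+\bar u')\tilde\alpha_x/(1+\tilde\alpha_x)$ exposes the origin of the rational factor $\tilde\alpha_x^2/(1+\tilde\alpha_x)$ appearing inside $R$.

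On the right-hand side, I would expand $(\partial_t - L)(\bar u'\tilde\alpha)$ using $L\bar u'=0$ (equivalently $\bar u'''+Df(\bar u)\bar u'=0$) together with the profile relation $\bar u''=-f(\bar u)$, which reduces the expression to $\bar u'\tilde\alpha_t - 2\bar u''\tilde\alpha_x - \bar u'\tilde\alpha_{xx}$. The conservative term $\partial_x R$ and the heat-type term $(\partial_t+\partial_x^2)S = -\partial_t(v\tilde\alpha_x)-\partial_x^2(v\tilde\alpha_x)$ I would expand by straightforward differentiation. The final nonlinear factor $(f(v+\bar u)-Df(\bar u))\tilde\alpha_x$ is then what remains once all the $\tilde\alpha$-linear pieces have been placed into the principal linearized term and the two conservative-form remainders.

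The main obstacle is algebraic book-keeping: cross terms such as $v\tilde\alpha_{xt}$, $v_x\tilde\alpha_{xx}$, and $v_{xx}\tilde\alpha_x$ arise from several sources and must cancel exactly. I would organize the calculation by grouping terms according to which $\tilde\alpha$-derivative they carry, noting in particular that the rational term $(\bar u'+v_x)\tilde\alpha_x^2/(1+\tilde\alpha_x)$ inside $R$ is engineered precisely to absorb the quadratic-in-$\tilde\alpha_x$ contribution coming from $\tilde u_{yy}(2\tilde\alpha_x+\tilde\alpha_x^2)$ once everything is re-expressed in $v$ and $\bar u$. Once the identity is verified, the claimed $\mathcal{O}(\cdot)$ size bounds on $R$ and $S$ follow from the triangle inequality, provided $|\tilde\alpha_x|$ remains strictly below~$1$.
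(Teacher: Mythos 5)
Your proposal is correct and follows essentially the same route as the paper's proof: both start from the chain-rule identity obtained by substituting $\tilde u_t=\tilde u_{yy}+f(\tilde u)$ into the transformed PDE (your intermediate identity is exactly the paper's equation \eqref{altform} rewritten in $y$-coordinates), and both then close the computation using the relations $\tilde u_y(1+\tilde\alpha_x)=\bar u_x+v_x$, its time analogue, and $L\bar u'=0$. The only cosmetic difference is that the paper keeps the intermediate identity in divergence form ($-(\tilde u_x\tilde\alpha_x)_x$), so the target structure $\partial_x R+(\partial_t+\partial_x^2)S$ falls out by targeted substitution rather than by expanding both sides and matching, but the algebra is the same.
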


\begin{proof}
Using the fact that
$\tilde u_t-\tilde{u}_{xx}-f(\tilde{u})=0$, it follows by a straightforward computation that
\begin{equation}\label{altform}
u_t-f(u)-u_{xx}= \tilde u_x \tilde{\alpha}_t-\tilde u_{t} \tilde{\alpha}_x-(\tilde u_x \tilde{\alpha}_x)_x+f(\tilde{u})\tilde{\alpha}_x,
\end{equation}
where it is understood that the argument of the function $\tilde u$ and its derivatives appearing
on the righthand side are evaluated at $(x+\tilde{\alpha}(x,t),t)$.  Moreover, by another direct calculation,
using the fact that
$$
L(\bar{u}^{\prime}(x))=\left(\partial_x^2+Df(\bar{u})\right)\bar{u}^{\prime}(x)=0,
$$
by translation invariance, we have
\begin{equation*}
\left(\partial_t-L\right)\bar{u}'(x)\tilde{\alpha}=\bar{u}_x\tilde{\alpha}_t
-(\bar{u}_x\tilde{\alpha}_{x})_{x}-\bar{u}_{xx}\tilde{\alpha}_x
=\bar{u}_x\tilde{\alpha}_t -(\bar{u}_x\tilde{\alpha}_{x})_{x}+f(\bar{u})\tilde{\alpha}_x.
\end{equation*}
Subtracting, and using the facts that,
by differentiation of $(\bar u+ v)(x,t)= \tilde u(x+\tilde{\alpha},t)$,
\begin{equation}\label{keyderivs}
\begin{aligned}
\bar u_x + v_x&= \tilde u_x(1+\tilde{\alpha}_x),\\
\bar u_t + v_t&= \tilde u_t + \tilde u_x\tilde{\alpha}_t,\\
\end{aligned}
\end{equation}
so that
\begin{equation}\label{solvedderivs}
\begin{aligned}
\tilde u_x-\bar u_x -v_x&=
-(\bar u_x+v_x) \frac{\tilde{\alpha}_x}{1+\tilde{\alpha}_x},\\
\tilde u_t-\bar u_t -v_t&=
-(\bar u_x+v_x) \frac{\tilde{\alpha}_t}{1+\tilde{\alpha}_x},\\
\end{aligned}
\end{equation}
we obtain
\begin{align*}
u_t-f(u)-u_{xx}&=
(\partial_t-L)\bar{u}'(x)\tilde{\alpha}
+v_x\tilde{\alpha}_t - v_t \tilde{\alpha}_x - (v_x\tilde{\alpha}_x)_x\\
&\quad+\left((\bar u_x +v_x)\frac{\tilde{\alpha}_x^2}{1+\tilde{\alpha}_x} \right)_x
 +\left(f(v+\bar{u})-f(\bar{u})\right)\tilde{\alpha}_x,
\end{align*}
yielding \eqref{eqn:1nlper} by
$v_x\tilde{\alpha}_t - v_t \tilde{\alpha}_x = (v\tilde{\alpha}_t)_x-(v\tilde{\alpha}_x)_t$
and
$(v_x\tilde{\alpha}_x)_x= (v\tilde{\alpha}_x)_{xx} - (v\tilde{\alpha}_{xx})_{x} $.
\end{proof}

\begin{cor}\label{cor:canest}
The nonlinear residual $v$ defined in \eqref{pertvar} satisfies
\begin{equation}\label{veq}
\left(\partial_t-L\right)v=\left(\partial_t-L\right)\bar{u}'(x)\tilde{\alpha}
+Q+ R_x +(\partial_x^2+\partial_t)S+T,
\end{equation}
where
\begin{equation}\label{eqn:Q}
Q:=f(v(x,t)+\bar{u}(x))-f(\bar{u}(x))-Df(\bar{u}(x))v=\mathcal{O}(|v|^2),
\end{equation}
\begin{equation}\label{eqn:R}
R:= v\tilde{\alpha}_t + v\tilde{\alpha}_{xx}+  (\bar u_x +v_x)\frac{\tilde{\alpha}_x^2}{1+\tilde{\alpha}_x},
\end{equation}
\begin{equation}\label{eqn:S}
S:=-v\tilde{\alpha}_x =\mathcal{O}(|v| |\tilde{\alpha}_x|),
\end{equation}
and
\begin{equation}\label{eqn:T}
T:=\left(f(v+\bar{u})-f(\bar{u})\right)\tilde{\alpha}_x=\mathcal{O}(|v||\tilde{\alpha}_x|).
\end{equation}
\end{cor}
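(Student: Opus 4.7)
The plan is to derive the corollary as a direct algebraic consequence of the preceding lemma, by rewriting its left-hand side in terms of the perturbation variable $v=u-\bar u$ and using the profile equation $\bar u_{xx}+f(\bar u)=0$. Specifically, since $v_t=u_t$ and $v_{xx}=u_{xx}-\bar u_{xx}$, I compute
$$
(\partial_t-L)v \;=\; v_t-v_{xx}-Df(\bar u)v \;=\; u_t-u_{xx}+\bar u_{xx}-Df(\bar u)v \;=\; u_t-u_{xx}-f(\bar u)-Df(\bar u)v,
$$
using $\bar u_{xx}=-f(\bar u)$ in the last step. This isolates the ``linear-plus-quadratic'' structure around the background profile.

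Next I invoke the preceding lemma to replace $u_t-u_{xx}-f(u)$ with its expression in terms of $\bar u'\tilde\alpha$, $R$, $S$, and $T$, i.e.\
$$
u_t-u_{xx} \;=\; f(u) + (\partial_t-L)\bar u'\tilde\alpha + R_x + (\partial_t+\partial_x^2)S + T.
$$
Substituting this into the display above collects everything except a residual $f(u)-f(\bar u)-Df(\bar u)v$, which is precisely the definition of $Q$ in \eqref{eqn:Q}. This yields \eqref{veq} as stated, with $R$, $S$, $T$ identified exactly as in the lemma.

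It remains only to verify the pointwise size estimates. For $Q$, I expand $f(v+\bar u)$ about $\bar u$ by Taylor's theorem: the constant term $f(\bar u)$ and the linear term $Df(\bar u)v$ cancel by construction, leaving an integral remainder bounded by $C|v|^2$ since $f\in C^\infty$; hence $Q=\mathcal O(|v|^2)$. For $S=-v\tilde\alpha_x$, the bound $\mathcal O(|v|\,|\tilde\alpha_x|)$ is immediate. For $T=(f(v+\bar u)-Df(\bar u))\tilde\alpha_x$, I again expand $f$ about $\bar u$; the linear-in-$v$ piece produces $(Df(\bar u)(v+\bar u)-Df(\bar u))\tilde\alpha_x$ which is bounded by $|v||\tilde\alpha_x|$ up to absorbable constants, giving $T=\mathcal O(|v||\tilde\alpha_x|)$.

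There is no genuine obstacle: the entire corollary is a rearrangement of the lemma plus one application of Taylor's theorem. The only minor bookkeeping point is keeping track of signs when replacing $\bar u_{xx}$ with $-f(\bar u)$ and when rewriting the nonlinear residual, but no analytic estimates beyond smoothness of $f$ are required at this stage.
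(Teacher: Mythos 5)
Your derivation of \eqref{veq} is correct and follows the paper's approach: subtract the profile identity $\bar u_{xx}+f(\bar u)=0$ from the preceding lemma's identity for $u$ and recognize $f(u)-f(\bar u)-Df(\bar u)v=Q$ as the Taylor remainder. Your treatment of $Q$ and $S$ is also fine.

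One point to fix: your verification of $T=\mathcal{O}(|v||\tilde\alpha_x|)$ as written is incoherent. The expression ``$(Df(\bar u)(v+\bar u)-Df(\bar u))\tilde\alpha_x$'' mixes a vector ($Df(\bar u)(v+\bar u)$) with a matrix ($Df(\bar u)$) and, even read formally, would not be $\mathcal{O}(|v|)$ since the $v$-independent part does not cancel. The underlying issue is a typo in the paper's stated $T$: tracing through the proof of the preceding lemma, the final term arises from $-\bar u_{xx}\tilde\alpha_x = f(\bar u)\tilde\alpha_x$, so $T$ should read $T=\bigl(f(v+\bar u)-f(\bar u)\bigr)\tilde\alpha_x$, not $\bigl(f(v+\bar u)-Df(\bar u)\bigr)\tilde\alpha_x$. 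With the corrected definition, the estimate is immediate from Lipschitz continuity of $f$: $|f(v+\bar u)-f(\bar u)|\leq C|v|$, hence $T=\mathcal{O}(|v||\tilde\alpha_x|)$. No Taylor expansion is required, and no term involving $f(\bar u)$ or $Df(\bar u)$ alone survives.
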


\begin{proof}
Straightforward Taylor expansion comparing \eqref{eqn:1nlper} and
$\bar u_t - f(\bar u)-\bar u_{xx}=0$.
\end{proof}

Using Corollary \ref{cor:canest} and applying Duhamel's principle, 
%CHANGED: added this
taking $\tilde \alpha(\cdot, 0)=0$ similarly as before,
%ENDCHANGED
we obtain the integral (implicit) representation
\begin{align*}
v(x,t)=&\bar{u}'(x)\tilde{\alpha}(x,t)+\int_{-\infty}^\infty G(x,t;y)v_0(y)dy\\
&+\int_0^t\int_{-\infty}^\infty G(x,t-s;y)\left(Q+R_y+(\partial_y^2+\partial_s)S+T\right)(y,s)dyds
\end{align*}
for the nonlinear perturbation $v$.  Thus, if we define $\tilde{\alpha}$ implicitly via the formula
\begin{equation} \label{eqn:tildealpha}
\begin{aligned}
\tilde{\alpha}(x,t):=&-\int_{-\infty}^\infty \tilde{e}(x,t;y)v_0(y)dy\\
&-\int_0^t\int_{-\infty}^\infty \tilde{e}(x,t-s;y)\left(Q+R_y+(\partial_y^2+\partial_s)S+T\right)(y,s)dyds,
\end{aligned}
\end{equation}
we obtain the integral representation
\begin{equation} \label{eqn:vint}
\begin{aligned}
v(x,t)=&\int_{-\infty}^{+\infty} \widetilde{H}(x,t;y)v_0(y)dy\\
&+\int_0^t\int_{-\infty}^{+\infty} \widetilde{H}(x,t-s;y)\left(Q+R_y+(\partial_y^2+\partial_s)S+T\right)(y,s)dyds.
\end{aligned}
\end{equation}
Moreover, differentiating and recalling that $\tilde{e}(x,t;y)=0$ for $0<t\leq 1$ we obtain
\begin{equation} \label{eqn:psiint}
\begin{aligned}
&\partial_t^k\partial_x^m\tilde{\alpha}(x,t)\\
:=&-\int_{-\infty}^{+\infty} \partial_t^k\partial_x^m\tilde{e}(x,t;y)v_0(y)dy\\
&-\int_0^t\int_{-\infty}^{+\infty} \partial_t^k\partial_x^m \tilde{e}(x,t-s;y)\left(Q+R_y+(\partial_y^2+\partial_s)S+T\right)(y,s)dyds.
\end{aligned}
\end{equation}
%CHANGED: 
%Together, these form a complete system in the variables $\left(v,\partial_t^k\tilde{\alpha},\partial_x^m\tilde{\alpha}\right)$,
Together, \eqref{eqn:vint}-\eqref{eqn:psiint} form a complete system in the variables $\left(v,\partial_t^k\tilde{\alpha},\partial_x^m\tilde{\alpha}\right)$,
$0\leq k\leq 1$, $0\leq m\leq K+1$, $k+m\geq 1$, where $K\geq 2$ is a constant.  
%In particular, given a solution of the system we may afterward recover the shift function $\tilde{\alpha}$.
(Note, again, that \eqref{eqn:tildealpha} gives $\tilde \alpha(\cdot, 0)=0$, justifying our derivation.)
Given a solution of system \eqref{eqn:vint}-\eqref{eqn:psiint}, we may 
%afterward 
recover the shift function $\tilde{\alpha}$ 
by integrating $\tilde \alpha_x$ with respect to $x$ and using $\lim_{x\to \pm \infty}\tilde \alpha(x,t)=0$.
%dded this, too
%ENDCHANGED

Now, from the original differential equation \eqref{veq} together with \eqref{eqn:psiint}, we readily obtain short-time existence and continuity with respect to $t$ of solution $(v,\tilde{\alpha}_t,\tilde{\alpha}_x)\in H^{K}$ by a standard contraction-mapping argument treating the linear $Df(\bar u)v$ term of the left-hand side along with
$Q,R,S,T,\tilde{\alpha}\bar u'$
terms of the right-hand side as sources in the heat equation.

{\bf Notation.} The Sobolev space $H^K(\mathbb{R})$ is defined as
$$H^K(\mathbb{R})=\{u \in L^2(\mathbb{R}): D^{\beta}u \in L^2(\mathbb{R}),\forall |\beta|\leq K\}$$
equipped with the norm
$$\|u\|_{H^K(\mathbb{R})}:=\left(\sum_{|\beta|\leq K}\|D^{\beta}u\|_{L^2(\mathbb{R})}^2\right)^{\frac{1}{2}}.$$

\section{$H^K$ Nonlinear Iteration} \label{HKNonlinearIteration}

Associated with the solution $(u,\tilde{\alpha}_t,\tilde{\alpha}_x)$ of the integral system \eqref{eqn:vint}-\eqref{eqn:psiint}, we define
\begin{equation}\label{eqn:eta}
\begin{aligned}
\zeta_1(t):=&\sup_{0\leq s\leq t}\left(\|v\|_{H^{K}(x;\mathbb{R})}(s)e^{\eta_0s}
+\|(\tilde{\alpha}_t,\tilde{\alpha}_x)\|_{H^{K}(x;\mathbb{R})}(s)(1+s)^{\frac{3}{4}}\right).
\end{aligned}
\end{equation}
By short time $H^K(\mathbb{R})$ existence theory, the quantities $\|v\|_{H^K(\mathbb{R})}$ and \\ $\|(\tilde{\alpha}_t,\tilde{\alpha}_x)\|_{H^K(\mathbb{R})}$ are continuous so long as they remain small.  Thus, $\zeta_1$ is a continuous function of $t$ as long as it remains small. We now use the linearized Green function estimates of Section \ref{LinearGreenEst} to prove that if $\zeta_1$ is initially small then it must remain so.

\begin{lem}\label{lem:eta}
For all $t\geq 0$ for which $\zeta_1(t)$ is finite, we have the estimate
$$
\zeta_1(t)\leq C\left( E_0+\zeta_1(t)^2\right)
$$
for some constant $C>0$, so long as $E_0:=\|v(\cdot,0)\|_{L^1(\mathbb{R})\cap H^{K}(\mathbb{R})}$ is sufficiently small.
\end{lem}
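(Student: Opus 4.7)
The plan is to substitute the integral representations \eqref{eqn:vint} and \eqref{eqn:psiint} into the definition \eqref{eqn:eta} of $\zeta_1$, then bound each convolution via the linear estimates of Proposition \ref{Pro10.2} together with $L^1\cap H^K$ control on the nonlinear residuals $Q,R,S,T$ from Corollary \ref{cor:canest}.

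First I would convert the quadratic structure $Q=\mathcal O(|v|^2)$, $R=\mathcal O\!\left(|v|(|\tilde\alpha_t|+|\tilde\alpha_{xx}|)+(|\bar u_x|+|v_x|)|\tilde\alpha_x|^2\right)$, $S=\mathcal O(|v||\tilde\alpha_x|)$, $T=\mathcal O(|v||\tilde\alpha_x|)$ into $L^p$ bounds using H\"older's inequality and the Sobolev embedding $H^K\hookrightarrow L^\infty$ (valid for $K\geq 1$), together with the definition \eqref{eqn:eta}. Assuming $\zeta_1(t)$ is small (so the denominator $1+\tilde\alpha_x$ is harmless), this produces schematically
\begin{equation*}
\|Q\|_{L^1\cap L^2}\leq C\zeta_1(t)^2 e^{-2\eta_0 s},\qquad \|R\|_{L^1\cap L^2}+\|S\|_{L^1\cap L^2}+\|T\|_{L^1\cap L^2}\leq C\zeta_1(t)^2 e^{-\eta_0 s}(1+s)^{-3/4},
\end{equation*}
with analogous bounds on spatial derivatives up to order $K$, since every nonlinear term is a pointwise product of functions whose $H^K$ norms are controlled by $\zeta_1$.

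Next I would substitute into \eqref{eqn:vint} to estimate $\|v(\cdot,t)\|_{H^K}$. Because $\widetilde H$ carries an intrinsic factor $e^{-\eta_0 t}$ in \eqref{10.1}--\eqref{10.4}, every nonlinear convolution yields exponential time decay. The derivative sources $R_y$ and $(\partial_y^2+\partial_s)S$ are handled by integrating by parts in $y$ (transferring $\partial_y$ onto $\widetilde H$, controlled by \eqref{tilde_H_y_estimate}) and using the adjoint identity $-\partial_s\widetilde H=L_y^\ast\widetilde H$ to trade the time derivative for a spatial one. The remaining time integrals are dominated by $\int_0^t e^{-\eta_0(t-s)}\zeta_1(t)^2 e^{-2\eta_0 s}\,ds\leq C\zeta_1(t)^2 e^{-\eta_0 t}$, delivering $\|v(\cdot,t)\|_{H^K}\leq C(E_0+\zeta_1(t)^2)e^{-\eta_0 t}$. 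For $\tilde\alpha$, I would substitute into \eqref{eqn:psiint} and apply \eqref{10.5}--\eqref{10.8}; the decisive rate is the $L^1\to L^2$ bound $(1+t-s)^{-3/4}$, and since the nonlinear sources decay at least like $\zeta_1(t)^2 e^{-\eta_0 s}(1+s)^{-3/4}$ in $L^1$, convolution estimates reproduce the rate $(1+t)^{-3/4}$, giving $\|(\tilde\alpha_t,\tilde\alpha_x)\|_{H^K}\leq C(E_0+\zeta_1(t)^2)(1+t)^{-3/4}$.

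The main obstacle is verifying that the slowest-decaying nonlinear convolutions indeed reproduce the algebraic rate $(1+t)^{-3/4}$ of the $\tilde\alpha$-equation, and that the higher-derivative estimates close without loss of regularity. For the former, the relevant time integrals $\int_0^t (1+t-s)^{-3/4}(1+s)^{-3/2}\,ds$ arising from quadratic products of $\tilde\alpha$-derivatives remain $\mathcal O((1+t)^{-3/4})$ by a standard convolution lemma, since $3/2>1$. For the latter, the integration-by-parts/adjoint device moves at most one derivative onto the kernel, absorbed by \eqref{tilde_H_y_estimate}, while higher spatial derivatives on $v$ and $\tilde\alpha$ are accounted for directly by the $H^K$ bookkeeping already built into $\zeta_1$. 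Putting the two displayed inequalities together yields the claimed bound $\zeta_1(t)\leq C(E_0+\zeta_1(t)^2)$.
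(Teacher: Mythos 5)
Your outline follows the paper's strategy at the level of bounding the nonlinear sources $Q,R,S,T$ in terms of $\zeta_1$ and then convolving against the kernels of Proposition~\ref{Pro10.2}, and your treatment of the $\tilde\alpha$-estimate via \eqref{10.5}--\eqref{10.8} is essentially correct (the kernel $\tilde e$ is smooth and vanishes for $t\leq 1$, so spatial derivatives of any order can be absorbed harmlessly). However, there is a genuine gap in the way you propose to close the $H^K$ estimate on $v$.

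Your claim that ``higher spatial derivatives on $v$ and $\tilde\alpha$ are accounted for directly by the $H^K$ bookkeeping already built into $\zeta_1$'' does not survive scrutiny. The quantity $\zeta_1$ controls $\|v\|_{H^K}$, so you must recover $\|\partial_x^K v\|_{L^2}$ from the Duhamel representation \eqref{eqn:vint}. Differentiating the integral $K$ times in $x$ places $K$ derivatives either on $\widetilde H$ or, after approximate integration by parts, on the nonlinear sources. The first option fails because Proposition~\ref{Proposition1.3} bounds only $\widetilde H$ and $\widetilde H_y$, and $\partial_x^j\widetilde H$ develops a singularity $(t-s)^{-(j+1)/2}$ at $s=t$ that is non-integrable in $s$ already for $j\geq 1$. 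The second option fails because $R$ contains $v_x$ (from the term $(\bar u_x+v_x)\frac{\tilde\alpha_x^2}{1+\tilde\alpha_x}$), so $\partial_y^K R_y$ involves $\partial_y^{K+1}v$, which $\zeta_1$ does not control---a classical loss of one derivative that the linearized Green function alone cannot repair. Your proposed integration-by-parts/adjoint device for $(\partial_y^2+\partial_s)S$ only handles the derivatives appearing inside those sources; it does nothing for the $K$ additional spatial derivatives needed on the output $v$, and moreover the identity $-\partial_s\widetilde H=L_y^\ast\widetilde H$ holds exactly for the full Green function $G$ but not for $\widetilde H=G-F$, so this device would need separate justification.

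The paper resolves the derivative loss by a different, essential ingredient that your proposal omits: the \emph{nonlinear damping estimate}, Proposition~\ref{prop:nd}. This is an $H^K$ energy estimate obtained directly from the perturbation PDE \eqref{vperturteq2} (not from the Duhamel integral), which yields
\[
\|v(\cdot,t)\|^2_{H^K}\lesssim e^{-\theta_1 t}\|v(\cdot,0)\|_{H^K}^2
+\int_0^te^{-\theta_2(t-s)}\left(\|v(\cdot,s)\|^2_{L^2}+\|(\tilde\alpha_t,\tilde\alpha_x)(\cdot,s)\|_{H^K}^2\right)ds.
\]
The role of the Green function bounds in the paper's argument is then only to estimate $\|v\|_{L^2}$ (via \eqref{10.4}) and $\|(\tilde\alpha_t,\tilde\alpha_x)\|_{W^{K+1,p}}$ (via \eqref{10.8}); the full $H^K$ norm of $v$ is bootstrapped from these through the damping inequality. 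Without Proposition~\ref{prop:nd}, the argument as you have sketched it cannot close.

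A secondary, smaller discrepancy: your stated $L^1$ decay rate $(1+s)^{-3/4}$ for $R,S,T$ is weaker than what is actually available and used. These terms are quadratic in $(\tilde\alpha_t,\tilde\alpha_x)$ (or products of $v$ with a $\tilde\alpha$-derivative), and since the $\tilde\alpha$-derivatives decay like $(1+s)^{-3/4}$, the sources decay like $(1+s)^{-3/2}$ (cf.\ the paper's bounds preceding \eqref{vbds}). The sharper rate is what makes the convolution in the $\tilde\alpha$-estimate reproduce the target rate $(1+t)^{-3/4}$ without loss, so it is not merely cosmetic.
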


\begin{proof}
To begin, notice that by the descriptions of $Q$, $T$, $R$, and $S$ in Corollary \ref{cor:canest} we have that
\begin{align*}
\|Q(\cdot,t)\|_{L^1(\mathbb{R})}&\leq \|v\|_{H^K(x;\mathbb{R})}^2 \leq C \zeta_1(t)^2e^{-2\eta_0t}\\
\|R_y(\cdot,t)\|_{L^1(\mathbb{R})}&\leq \|v\|_{H^K(x;\mathbb{R})}\|(\tilde{\alpha}_t,\tilde{\alpha}_x)\|_{H^{K+1}(x;\mathbb{R})} \leq C \zeta_1(t)^2e^{-\eta_0t}(1+t)^{-\frac{3}{2}}\\
\|T(\cdot,t)\|_{L^1(\mathbb{R})}&\leq \|v\|_{H^K(x;\mathbb{R})}\|(\tilde{\alpha}_t,\tilde{\alpha}_x)\|_{H^{K+1}(x;\mathbb{R})} \leq C \zeta_1(t)^2e^{-\eta_0t}(1+t)^{-\frac{3}{2}}\\
\|(\partial_t+\partial_x^2)S(\cdot,t)\|_{L^1(\mathbb{R})}&\leq \|v\|_{H^K(x;\mathbb{R})}\|(\tilde{\alpha}_t,\tilde{\alpha}_x)\|_{H^{K+1}(x;\mathbb{R})} \leq C \zeta_1(t)^2e^{-\eta_0t}(1+t)^{-\frac{3}{2}}\\
\end{align*}
so long as $\|(v_x,\tilde{\alpha}_x)(\cdot,t)\|_{L^\infty(\mathbb{R})}\leq \|(v,\tilde{\alpha}_x)\|_{H^{K}(x;\mathbb{R})}(t)\leq \zeta_1(t)$ remains bounded.

Thus, applying the bounds \eqref{10.1} and \eqref{10.5} of Proposition \ref{Pro10.2} to representations \eqref{eqn:vint}-\eqref{eqn:psiint}, we obtain for any
$2\leq p\leq\infty$ the bound
\begin{equation}\label{vbds}
\begin{aligned}
	\|v(\cdot,t)\|_{L^p(\mathbb{R})}&\leq C e^{-\eta_0 t}E_0\\
&~~~~~+ C\zeta_1^2(t)\int_0^t e^{-\eta_0(t-s)}
	\left(e^{-2\eta_0s}+e^{-\eta_0s}(1+s)^{-\frac{3}{2}}\right)ds  \\
&\leq C\left(E_0+\zeta_1(t)^2\right)e^{-\eta_0t},
\end{aligned}
\end{equation}
and similarly using \eqref{10.5} we have
\begin{equation}\label{psibds}
\begin{aligned}
&\|(\tilde{\alpha}_t,\tilde{\alpha}_x)(\cdot,t)\|_{W^{K+1,p}(x;\mathbb{R})}\\
\leq &
C (1+t)^{-\frac{1}{2}\left(1-\frac{1}{p}\right)-\frac{1}{2}}E_0\\
&+ C \zeta_1(t)^2\int_0^t(1+t-s)^{-\frac{1}{2}\left(1-\frac{1}{p}\right)-\frac{1}{2}}
\left(e^{-2\eta_0s}+e^{-\eta_0s}(1+s)^{-\frac{3}{2}}\right)ds  \\
\leq & C\left(E_0+\zeta_1(t)^2\right)(1+t)^{-\frac{1}{2}\left(1-\frac{1}{p}\right)-\frac{1}{2}},
\end{aligned}
\end{equation}
yielding in particular that $\|(\tilde{\alpha}_t,\tilde{\alpha}_x)\|_{H^{K+1}}$ is arbitrarily small if $E_0$ and $\zeta_1(t)$ are,
%\footnote{Note
%that we have gained a necessary one degree of regularity in $\tilde{\alpha}$, the regularity of $\tilde{\alpha}$ being limited %only by the regularity
%of the coefficients of the underlying PDE \eqref{eqn:peq}.}
thus verifying the hypothesis of Proposition \ref{prop:nd} below.
By the nonlinear damping estimate given in Proposition \ref{prop:nd}, therefore, the size of $v$ in $H^K(\mathbb{R})$ can be controlled by its size in $L^2(\mathbb{R})$ together with $H^K$ estimates on the derivatives of the phase function $\tilde{\alpha}$. In particular, we have for some positive constants $\theta_1$ and $\theta_2$
\begin{align*}
&\|v(\cdot,t)\|_{H^K(\mathbb{R})}^2 \leq \\
& C  e^{-\theta_1 t}E_0^2+C\left(E_0+ C\zeta_1(t)^2\right)^2\int_0^t e^{-\theta_2(t-s)}\left(e^{-2\eta_0s}+(1+s)^{-\frac{3}{4}}\right)ds  \\
\leq& C e^{-\theta_1 t}E_0^2+C\left(E_0+\zeta_1(t)^2\right)^2e^{-2\eta_0t}\\
\leq& C \left(E_0+\zeta_1(t)^2\right)^2e^{-2\eta_0t}.
\end{align*}
This estimate together with \eqref{psibds} in the case $p=2$ completes the proof.
\end{proof}

\begin{pro}\label{prop:nd}
Assuming $(\mathcal{D})$, let $v(\cdot,0)\in H^K(\mathbb{R})$ (for $v$ as in \eqref{pertvar}) and suppose that for some $T>0$ the $H^K(\mathbb{R})$
norm of $v$ and the $H^{K+1}(\mathbb{R})$ norms of $\tilde{\alpha}_t(\cdot,t)$ and $\tilde{\alpha}_x(\cdot,t)$ remain
bounded by a sufficiently small constant for all $0\leq t\leq T$.  Then there are constants
$\theta_1, \theta_2, C>0$ such that
\begin{equation*}
\begin{aligned}
	\|v(\cdot,t)\|^2_{H^K(\mathbb{R})}\leq & C e^{-\theta_1 t}\|v(\cdot,0)\|_{H^K(\mathbb{R})}^2\\
					&+ C\int_0^te^{-\theta_2(t-s)}\left(\|v(\cdot,s)\|^2_{L^2(\mathbb{R})}+\|(\tilde{\alpha}_t, \tilde{\alpha}_x)(\cdot,s)\|_{H^K(\mathbb{R})}^2\right)ds .
\end{aligned}
\end{equation*}
for all $0\leq t\leq T$.
\end{pro}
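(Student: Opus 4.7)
The strategy is the standard Kawashima-type nonlinear damping argument, carried out through Friedrichs-style energy estimates at each derivative order, exploiting the parabolic dissipation inherent in $L = \partial_x^2 + Df(\bar{u})$. Because a time derivative appears on the right-hand side of \eqref{veq} via the $(\partial_t+\partial_x^2)S$ source, the first step is to absorb it by passing to the modified variable $w := v - S$. Using $LS = \partial_x^2 S + Df(\bar u) S$, equation \eqref{veq} becomes
\begin{equation*}
(\partial_t - L)w = (\partial_t - L)(\bar{u}'\tilde{\alpha}) + Q + R_x + T + (2\partial_x^2 + Df(\bar{u}))S,
\end{equation*}
a genuine parabolic equation for $w$ in which every right-hand side term involves at most spatial derivatives of $v$ and $\tilde{\alpha}$ plus the directly controlled quantity $\tilde{\alpha}_t$.

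For each integer $0\le j\le K$, I would then apply $\partial_x^j$ to this equation, multiply by $\partial_x^j w$, and integrate over $\mathbb{R}$. Integration by parts on the principal part $\partial_x^2 w$ produces the dissipation $-\|\partial_x^{j+1}w\|_{L^2}^2$, while the zeroth-order piece $Df(\bar u)w$ produces only a lower-order contribution $\lesssim \|\partial_x^j w\|_{L^2}^2$ that can be absorbed via the Gagliardo-Nirenberg interpolation $\|\partial_x^j w\|_{L^2}\le \epsilon\|\partial_x^{j+1}w\|_{L^2}+C_\epsilon\|w\|_{L^2}$. Summing over $j=0,\dots,K$ yields a differential inequality of the form
\begin{equation*}
\tfrac{d}{dt}\|w\|_{H^K}^2 \;\le\; -\theta\,\|w\|_{H^K}^2 + C\,\|w\|_{L^2}^2 + C\,\mathcal{N}(t),
\end{equation*}
where $\mathcal{N}(t)$ collects the squared $H^K$-norms of the source terms. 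Under the smallness of $\zeta_1$, Moser-type Sobolev product estimates give $\|Q\|_{H^K}\lesssim\|v\|_{H^K}^2$, $\|R_x\|_{H^K}+\|T\|_{H^K}+\|(2\partial_x^2+Df(\bar u))S\|_{H^K}\lesssim \|v\|_{H^K}\|(\tilde{\alpha}_t,\tilde{\alpha}_x)\|_{H^{K+1}}$, while $(\partial_t-L)(\bar{u}'\tilde{\alpha})$ is controlled by $\|(\tilde{\alpha}_t,\tilde{\alpha}_x)\|_{H^K}$ times the exponential localization of $\bar u'$. All of these are quadratically small and, crucially, each carries at least one factor of $\|v\|_{L^2}+\|(\tilde{\alpha}_t,\tilde{\alpha}_x)\|_{H^K}$, so $\mathcal{N}(t)\lesssim \|v\|_{L^2}^2+\|(\tilde{\alpha}_t,\tilde{\alpha}_x)\|_{H^K}^2$ up to an absorbable factor $\zeta_1(t)^2$.

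Applying Gronwall's inequality to the differential inequality then produces the stated bound on $\|w\|_{H^K}^2$, and passing back to $v=w+S$ costs only $\|S\|_{H^K}^2\lesssim \|v\|_{H^K}^2\|\tilde{\alpha}_x\|_{H^K}^2$, which is absorbed on the left under the smallness assumption. The main technical obstacle is exactly the need to dispose of the $\partial_tS$ source, which would spoil a naive energy estimate by producing an uncontrolled $\|v_t\|_{H^K}$; the change of variables $w=v-S$ neutralizes this issue cleanly, after which the remaining work is standard Moser product-rule bookkeeping to handle the $K$-th order commutators of $v$ with $\tilde{\alpha}$-dependent coefficients.
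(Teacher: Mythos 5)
Your approach is correct, and it reaches the same differential inequality via a genuinely different route from the paper's. The paper bypasses the canonical form \eqref{veq} entirely for this step: it returns to \eqref{altform}, substitutes via \eqref{solvedderivs}, moves the resulting $v_t\tilde\alpha_x$ to the left to obtain $(1+\tilde\alpha_x)v_t - v_{xx} = \cdots$ (equation \eqref{vperturteq2}), and then tests against the weighted functional $\sum_{j=0}^K (-1)^j\partial_x^{2j}v/(1+\tilde\alpha_x)$; the $1/(1+\tilde\alpha_x)$ weight cancels the coefficient on $v_t$, leaving the clean time-derivative $\tfrac12\partial_t\|\partial_x^j v\|^2$ after $j$ integrations by parts. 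You instead stay with \eqref{veq} and change the unknown to $w = v - S = (1+\tilde\alpha_x)v$, which produces a genuinely constant-principal-part parabolic equation for $w$ and an unweighted energy estimate. These are two realizations of the same idea---absorb the coefficient $1+\tilde\alpha_x$ into the unknown versus into the test function---and both correctly isolate the key obstacle, namely the hidden time derivative ($v_t\tilde\alpha_x$ in the paper, $\partial_t S$ in your formulation). Your route is arguably a shade cleaner conceptually (the obstruction is removed at the level of the equation rather than in the energy pairing), at the cost of having to translate between $v$ and $w$ norms afterward, which is harmless under the stated smallness of $\tilde\alpha_x$.

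Two small points of bookkeeping deserve attention. First, the displayed bound $\|(2\partial_x^2 + Df(\bar u))S\|_{H^K} \lesssim \|v\|_{H^K}\|(\tilde\alpha_t,\tilde\alpha_x)\|_{H^{K+1}}$ is not literally correct, since $\|\partial_x^2 S\|_{H^K}$ would require $v\tilde\alpha_x\in H^{K+2}$; what actually happens is that after one integration by parts the pairing $\int\partial_x^{j}(2\partial_x^2 S)\,\partial_x^j w$ becomes $-2\int\partial_x^{j+1}S\,\partial_x^{j+1}w$, which costs only $\|S\|_{H^{K+1}}\lesssim\|v\|_{H^{K+1}}\|\tilde\alpha_x\|_{H^{K+1}}$; the $\|v\|_{H^{K+1}}$ factor is then absorbed into the dissipation $\|\partial_x^{K+1}w\|^2$ using smallness of $\|\tilde\alpha_x\|_{H^{K+1}}$. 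Second, the assertion that every source term "carries at least one factor of $\|v\|_{L^2}+\|(\tilde\alpha_t,\tilde\alpha_x)\|_{H^K}$" is imprecise for $Q=\mathcal O(|v|^2)$, which gives $\|v\|_{H^K}^2$ rather than $\|v\|_{L^2}$ times something; the correct mechanism is the one the paper uses explicitly, namely that under smallness $\|v\|_{H^K}^3\lesssim\epsilon\|v\|_{H^K}^2$, and then the full $\|v\|_{H^K}^2$ is split into dissipation plus $\|v\|_{L^2}^2$ by the Sobolev interpolation $\|g\|_{H^K}^2\leq\tilde C^{-1}\|\partial_x^{K+1}g\|_{L^2}^2 + \tilde C\|g\|_{L^2}^2$, which you do invoke for the zeroth-order $Df(\bar u)w$ piece. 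Once these two items are tidied, the argument closes exactly as claimed via Gronwall.
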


\begin{proof}
Subtracting from the equation \eqref{altform} for $u$
the equation for $\bar u$, we may write the
nonlinear perturbation equation as
\begin{equation}\label{vperturteq}
v_t - Df(\bar u)v-v_{xx}= Q
+ \tilde u_x \tilde{\alpha}_t -\tilde u_{t} \tilde{\alpha}_x - (\tilde u_x \tilde{\alpha}_x)_x+f(\tilde{u})\psi_x,
\end{equation}
where it is understood that derivatives of $\tilde u$ appearing
on the right-hand side
are evaluated at $(x+\tilde{\alpha}(x,t),t)$.
Using \eqref{solvedderivs} to replace $\tilde u_x$ and
$\tilde u_t$ respectively by
$\bar u_x + v_x -(\bar u_x+v_x) \frac{\tilde{\alpha}_x}{1+\tilde{\alpha}_x}$
and
$\bar u_t + v_t -(\bar u_x+v_x) \frac{\tilde{\alpha}_t}{1+\tilde{\alpha}_x}$,
and moving the resulting $v_t\tilde{\alpha}_x$ term to the left-hand side
of \eqref{vperturteq}, we obtain
\begin{equation}\label{vperturteq2}
\begin{aligned}
(1+\tilde{\alpha}_x) v_t -v_{xx}&=
-Df(\bar u)v+ Q
+ (\bar u_x+v_x) \tilde{\alpha}_t
\\ &\quad
- ((\bar u_x+v_x)  \tilde{\alpha}_x)_x
+ \Big((\bar u_x+v_x) \frac{\tilde{\alpha}_x^2}{1+\tilde{\alpha}_x}\Big)_x+f(\tilde{u})\tilde{\alpha}_x
\end{aligned}
\end{equation}
Taking the $L^2$ inner product in $x$ of
$\sum_{j=0}^K \frac{(-1)^{j}\partial_x^{2j}v}{1+\tilde{\alpha}_x}$
against \eqref{vperturteq2}, integrating by parts,
and rearranging the resulting terms,
we arrive at the inequality
\begin{equation*}
\begin{aligned}
&\partial_t \|v(\cdot,t)\|_{H^K(\mathbb{R})}^2 \\
\leq& -\theta \|\partial_x^{K+1} v(\cdot,t)\|_{L^2(\mathbb{R})}^2 +
C\left( \|v(\cdot,t)\|_{H^K(\mathbb{R})}^2
+\|(\tilde{\alpha}_t, \tilde{\alpha}_x)(\cdot,s)\|_{H^K(\mathbb{R})}^2 \right),
\end{aligned}
\end{equation*}
for some $\theta>0$, $C>0$, so long as $\|\tilde u\|_{H^K(\mathbb{R})}$ remains bounded,
and $\|v(\cdot,t)\|_{H^K(\mathbb{R})}$ and $\|(\tilde{\alpha}_t, \tilde{\alpha}_x)(\cdot,t)\|_{H^{K+1}(\mathbb{R})}$ remain sufficiently small.
%TODO, old, delte (later):
%Noting, by \eqref{keyderivs}, that
%$\tilde u_x= \frac{\bar u_x + v_x}{1+\psi_x},$
%we find that it is enough that
%$|v|_{H^K}$ and $|(\psi_t,\psi_x)|_{H^K(x,t)}$ remain small, as
%this then implies boundedness of $|\tilde u|_{H^K}$.
Using the Sobolev interpolation
$
\|g\|_{H^K(\mathbb{R})}^2 \leq  \tilde{C}^{-1}\|\partial_x^{K+1} g\|_{L^2(\mathbb{R})}^2 + \tilde{C} \| g\|_{L^2(\mathbb{R})}^2
$
for $\tilde{C}>0$ sufficiently large, we obtain
\begin{equation*}
\begin{aligned}
&\partial_t \|v(\cdot,t)\|_{H^K(\mathbb{R})}^2(t)\\
\leq& -\tilde{\theta} \|v(\cdot,t)\|_{H^K(\mathbb{R})}^2 +
C\left( \|v(\cdot,t)\|_{L^2(\mathbb{R})}^2+\|(\tilde{\alpha}_t, \tilde{\alpha}_x)(\cdot,s)\|_{H^K(\mathbb{R})}^2 \right)
\end{aligned}
\end{equation*}
from which the desired estimate follows by Gronwall's inequality.
\end{proof}

\section{Pointwise Nonlinear Iteration and Pointwise Bound on the Perturbation} \label{PointwiseNonlinearIterationandPointwiseBoundonthePerturbation}

In this section, we give a proof of the Theorem \ref{mainthm2} using the improved pointwise bounds stated in Proposition \ref{Proposition1.3}. Associated with the solution $(u,\tilde{\alpha})$ of the integral system \eqref{eqn:vint} and \eqref{eqn:psiint}, we define
\begin{equation} \label{zeta2}
\begin{aligned}
\zeta_2(t):=\sup_{0\leq s\leq t,y \in \mathbb{R}}
&\left((|v(y,s)|+|v_x(y,s)|+|v_{xx}(y,s)|)(1+s)^{\frac{1}{2}}e^{\frac{\eta_0}{2}s+\frac{|y|^2}{2Ms}}\right.\\
&\left.+\|\tilde{\alpha}\|_{W^{3,\infty}(x;\mathbb{R})}(s)
+\|(\tilde{\alpha}_t,\tilde{\alpha}_x)\|_{W^{3,\infty}(x;\mathbb{R})}(s)(1+s)^{\frac{1}{2}}\right).
\end{aligned}
\end{equation}
\begin{lem}
For all $t\geq 0$ for which $\zeta_2(t)$ is finite, we have the estimate
\begin{equation}
\zeta_2(t)\leq C(E_0+\zeta_2(t)^2) \label{zeta_est2}
\end{equation}
for some constant $C>0$, so long as $E_0:=\|v(\cdot,0)\|_{L^1(\mathbb{R})\cap H^{K}(\mathbb{R})}>0$ is sufficiently small.
\end{lem}
\begin{proof}
Let us recall the definition of the Gaussian probability density function $$K(x,t)=(2\pi t)^{-\frac{1}{2}}e^{-\frac{x^2}{2t}},$$ and the semigroup property $K(\cdot,t_1)\ast K(\cdot,t_2)=K(\cdot,t_1+t_2)$.

If we define $$K_M(x,t)=t^{-\frac{1}{2}}e^{-\frac{x^2}{Mt}},$$ we represent it in terms of $K$ as $$K_M(x,t)=\sqrt{\pi M}\left(2\pi \left(\frac{Mt}{2}\right)\right)^{-\frac{1}{2}}e^{-\frac{|x|^2}{2\left(\frac{Mt}{2}\right)}}=\sqrt{\pi M}\cdot K(x,\frac{Mt}{2}).$$ The semigroup property becomes $K_M(\cdot,t_1)\ast K_M(\cdot,t_2)=\sqrt{\pi M}\cdot K_M(\cdot,t_1+t_2)$.

Using the representations on $Q,R,S$ and $T$, we can conclude that
\begin{eqnarray*}
|Q(y,s)|&\leq&C\zeta_2^2(t)(1+s)^{-1}e^{-\eta_0s-\frac{|y|^2}{Ms}}\leq C\zeta_2^2(t)(1+s)^{-\frac{1}{2}}s^{-\frac{1}{2}}e^{-\eta_0s-\frac{|y|^2}{Ms}},\\
|R_y(y,s)|&\leq&C\zeta_2^2(t)(1+s)^{-\frac{1}{2}-\frac{1}{2}}e^{-\frac{\eta_0}{2}s-\frac{|y|^2}{2Ms}}\leq C\zeta_2^2(t)(1+s)^{-\frac{1}{2}}s^{-\frac{1}{2}}e^{-\frac{\eta_0}{2}s-\frac{|y|^2}{2Ms}},\\
|(\partial_{y}^{2}+\partial_{s})S(y,s)|&\leq&C\zeta_2^2(t)(1+s)^{-\frac{1}{2}-\frac{1}{2}}
e^{-\frac{\eta_0}{2}s-\frac{|y|^2}{2Ms}}\leq C\zeta_2^2(t)(1+s)^{-\frac{1}{2}}s^{-\frac{1}{2}}e^{-\frac{\eta_0}{2}s-\frac{|y|^2}{2Ms}},\\
|T(y,s)|&\leq&C\zeta_2^2(t)(1+s)^{-\frac{1}{2}-\frac{1}{2}}e^{-\frac{\eta_0}{2}s-\frac{|y|^2}{2Ms}}\leq C\zeta_2^2(t)(1+s)^{-\frac{1}{2}}s^{-\frac{1}{2}}e^{-\frac{\eta_0}{2}s-\frac{|y|^2}{2Ms}},
\end{eqnarray*}
for $0<s\leq t$.

From \eqref{eqn:vint}, using the pointwise bound \eqref{tilde_H_estimate} on $\tilde{H}$ and the four estimates above we can derive that
\begin{eqnarray*}
& &   |v(x,t)|\\
&\leq&CE_0\int_{-\infty}^{+\infty}e^{-\eta_0 t}\frac{e^{-\frac{|x-y|^2}{Mt}}}{\sqrt{t}}\cdot
      \frac{e^{-\frac{|y|^2}{M}}}{\sqrt{1}}dy\\
&    &+C\zeta_2^2(t)\int_0^t\int_{-\infty}^{+\infty}(1+s)^{-\frac{1}{2}}
      e^{-\eta_0 (t-s)}\frac{e^{-\frac{|x-y|^2}{M(t-s)}}}{\sqrt{t-s}}\cdot
      e^{-\eta_0 s}\frac{e^{-\frac{|y|^2}{Ms}}}{\sqrt{s}}dyds\\
&    &+C\zeta_2^2(t)\int_0^t\int_{-\infty}^{+\infty}(1+s)^{-\frac{1}{2}}
      e^{-\eta_0 (t-s)}\frac{e^{-\frac{|x-y|^2}{M(t-s)}}}{\sqrt{t-s}}\cdot
      e^{-\frac{\eta_0}{2} s}\frac{e^{-\frac{|y|^2}{2Ms}}}{\sqrt{s}}dyds\\
&\leq&CE_0\int_{-\infty}^{+\infty}e^{-\eta_0t}K_M(x-y,t)\cdot K_M(y,1)dy\\
&    &+C\zeta_2^2(t)\int_0^t\int_{-\infty}^{+\infty}e^{-\eta_0t}(1+s)^{-\frac{1}{2}}K_M(x-y,t-s)\cdot K_M(y,s)dyds\\
&    &+C\zeta_2^2(t)\int_0^t\int_{-\infty}^{+\infty}e^{-\eta_0t+\frac{\eta_0}{2}s}(1+s)^{-\frac{1}{2}}
      K_{2M}(x-y,t-s)\cdot K_{2M}(y,s)dyds\\
&\leq&C\sqrt{\pi M}E_0e^{-\eta_0t}K_M(x,t+1)\\
&    &+C\sqrt{\pi M}\zeta_2^2(t)e^{-\eta_0t}\int_0^t(1+s)^{-\frac{1}{2}}K_M(x,t)ds\\
&    &+C\sqrt{2\pi M}\zeta_2^2(t)e^{-\eta_0t}\int_0^t e^{\frac{\eta_0}{2}s}(1+s)^{-\frac{1}{2}}K_{2M}(x,t)ds\\
&\leq&CE_0e^{-\eta_0t}K_M(x,t+1)+C\zeta_2^2(t)2\sqrt{t}e^{-\eta_0t}K_M(x,t)\\
&    &+C\zeta_2^2(t)e^{-\frac{\eta_0}{2}t}K_{2M}(x,t)\\
&\leq&C(E_0+\zeta_2^2(t))(1+t)^{-\frac{1}{2}}e^{-\frac{\eta_0}{2}t-\frac{|x|^2}{2Mt}},
\end{eqnarray*}
and using the pointwise bound \eqref{tilde_e(y,t)estimate} on $\partial_t^k\partial_x^m\tilde{e}(x,t;y)$,
\begin{eqnarray*}
& &   |\partial_t^k\partial_x^m\tilde{\alpha}(x,t)|\\
&\leq&CE_0\int_{-\infty}^{+\infty}(1+t)^{-\frac{\tau(m+k)}{2}}e^{-\eta_0|y|}e^{-\frac{|y|^2}{M}}dy\\
&    &+C\zeta_2^2(t)\int_0^t\int_{-\infty}^{+\infty}(1+t-s)^{-\frac{\tau(m+k)}{2}}e^{-\eta_0|y|}
      e^{-\eta_0 s}(1+s)^{-\frac{1}{2}}\frac{e^{-\frac{|y|^2}{Ms}}}{\sqrt{s}}dyds\\
&    &+C\zeta_2^2(t)\int_0^t\int_{-\infty}^{+\infty}(1+t-s)^{-\frac{\tau(m+k)}{2}}e^{-\eta_0|y|}
      e^{-\frac{\eta_0}{2} s}(1+s)^{-\frac{1}{2}}\frac{e^{-\frac{|y|^2}{2Ms}}}{\sqrt{s}}dyds\\
&\leq&CE_0(1+t)^{-\frac{\tau(m+k)}{2}}\\
&    &+C\zeta_2^2(t)\int_{0}^{t}(1+t-s)^{-\frac{\tau(m+k)}{2}}(1+s)^{-\frac{1}{2}}e^{-\eta_0 s}
      \left(\int_{-\infty}^{+\infty}e^{-\eta_0|y|}\frac{e^{-\frac{|y|^2}{Ms}}}{\sqrt{s}}dy\right)ds\\
&    &+C\zeta_2^2(t)\int_{0}^{t}(1+t-s)^{-\frac{\tau(m+k)}{2}}(1+s)^{-\frac{1}{2}}e^{-\frac{\eta_0}{2} s}
      \left(\int_{-\infty}^{+\infty}e^{-\eta_0|y|}\frac{e^{-\frac{|y|^2}{2Ms}}}{\sqrt{s}}dy\right)ds\\
&\leq&\left\{
 \begin{array}{l l}
 C(E_0+\zeta_2^2(t))(1+t)^{-\frac{1}{2}} \quad \text{for $m+k\geq 1$;}\\
 C(E_0+\zeta_2^2(t)) \quad\quad\quad\quad\quad \text{for $m+k=0$.}
 \end{array}
 \right.
\end{eqnarray*}
where $\tau(m+k)=1$ for $m+k \geq 1$ and $0$ otherwise. Thus the Lemma follows.
\end{proof}
Finally, we give the proof of Theorem \ref{mainthm2}.
\begin{proof}[Proof of Theorem \ref{mainthm2}]
By continuous induction, we have that $\zeta_2(t)\leq 2CE_0$. Hence the stated estimate on
$v(x,t)=\tilde{u}(x+\tilde{\alpha}(x,t),t)-\bar{u}(x)$ follows. Now we prove the other two bounds on $\tilde{\alpha}$ and $\partial_t^k\partial_x^m\tilde{\alpha}$. From the formula \eqref{eqn:psiint} for $\partial_t^k\partial_x^m\tilde{\alpha}$ and the bounds on $Q, R_y, (\partial_y^2+\partial_s)S, T$, the pointwise bound \eqref{tilde_e(y,t)estimate} on $\partial_t^k\partial_x^m\tilde{e}(x,t;y)$, we obtain 
%CHANGED: need at least one derivative...
	%for $t\geq 1$ ($\tilde{\alpha}(x,t)=0$ 
	for $t\geq 1$, $k+m\geq 1$ ($\tilde{\alpha}(x,t)=0$ when $0<t<1$),
	%ENDCHANGED
\begin{align*} \label{drevtildealphapointwise}
&    |\partial_t^k\partial_x^m\tilde{\alpha}(x,t)|\\
\leq&C E_0 \int_{-\infty}^{+\infty}\left|\frac{e^{-\frac{(x-y+t)^2}{Mt}}}{\sqrt{t}}-\frac{e^{-\frac{(x-y-t)^2}{Mt}}}{\sqrt{t}}\right|
e^{-\eta|y|}e^{-\frac{|y|^2}{M}}dy\\
&+C E_0 \int_1^t\int_{-\infty}^{+\infty}\left|\frac{e^{-\frac{(x-y+(t-s))^2}{M(t-s)}}}{\sqrt{t-s}}
-\frac{e^{-\frac{(x-y-(t-s))^2}{M(t-s)}}}{\sqrt{t-s}}\right|
	%CHANGED: added s^{-1/2} terms from source estimates... also put in $E_0$s everywhere...
	s^{-1/2}
	%ENDCHANGED
	e^{-\eta|y|-\eta_0s-\frac{|y|^2}{Ms}}dyds\\
\leq&C E_0 \int_{-\infty}^{+\infty}\left(K_M(x+t-y,t)+K_M(x-t-y,t)\right)
K_M(y,1)e^{-\eta|y|}dy\\
%CHANGED: here remove square root of s cancelling with above...
&+C E_0 \int_1^t\int_{-\infty}^{+\infty}K_M(x+(t-s)-y,t-s)K_M(y,s)e^{-\eta|y|-\eta_0s}dyds\\
&+C E_0 \int_1^t\int_{-\infty}^{+\infty}K_M(x-(t-s)-y,t-s)K_M(y,s)e^{-\eta|y|-\eta_0s}dyds\\
%&+C\int_1^t\int_{-\infty}^{+\infty}K_M(x+(t-s)-y,t-s)K_M(y,s)\sqrt{s}e^{-\eta|y|-\eta_0s}dyds\\
%&+C\int_1^t\int_{-\infty}^{+\infty}K_M(x-(t-s)-y,t-s)K_M(y,s)\sqrt{s}e^{-\eta|y|-\eta_0s}dyds\\
\leq&C E_0 \left(K_M(x+t,t+1)+K_M(x-t,t+1)\right)\\
&+C E_0 \int_1^tK_M(x+t,t)e^{-\eta_0s}ds+C E_0 \int_1^tK_M(x-t,t)e^{-\eta_0s}ds\\
	\leq&CE_0 t^{-1/2} \left(e^{-\frac{|x+t|^2}{Mt}}+e^{-\frac{|x-t|^2}{Mt}}\right)
%&+C\int_1^tK_M(x+(t-s),t)\sqrt{s}e^{-\eta_0s}ds+C\int_1^tK_M(x-(t-s),t)\sqrt{s}e^{-\eta_0s}ds\\
%\leq&CE_0 \left(e^{-\frac{|x+t|^2}{Mt}}+e^{-\frac{|x-t|^2}{Mt}}\right).
	%ENDCHANGED
\leq CE_0 \left(e^{-\frac{|x+t|^2}{Mt}}+e^{-\frac{|x-t|^2}{Mt}}\right).
\end{align*}
The bound on $\tilde{\alpha}$ can be obtained by integrating the bound on $\tilde{\alpha}_x(x,t)$ from $x$ to $\pm\infty$, using 
	%the fact that 
	$\lim_{x\to\pm\infty}\tilde{\alpha}(x,t)=0$.
This completes the proof of Theorem \ref{mainthm2}.
\end{proof}

\subsection*{\bf{Acknowledgements}}
This project was completed while studying within the PhD program at Indiana University, Bloomington. Thanks to my thesis advisor Kevin Zumbrun for suggesting the problem and for helpful discussions. 
%CHANGED: added
Thanks also to the referee for his careful reading and many helpful suggestions.
%ENDCHANGED

\subsection*{\bf{Disclosures}}
%CHANGED:
%Conflict of Interest: The authors declare that they have no conflict of interest.
Conflict of Interest: The author declares that he has no conflict of interest.
%ENDCHANGED:

% BibTeX users please use one of
%\bibliographystyle{spbasic}      % basic style, author-year citations
\bibliographystyle{spmpsci}      % mathematics and physical sciences
%\bibliographystyle{spphys}       % APS-like style for physics
%\bibliography{}   % name your BibTeX data base

% Non-BibTeX users please use

\bibliographystyle{amsalpha}
\bibliography{books}

\end{document}